\newtheorem{thm}{Theorem}[section]
\newtheorem{lemma}[thm]{Lemma}
\newtheorem{cor}[thm]{Corollary}
\newtheorem{prop}[thm]{Proposition}
\theoremstyle{definition}
\newtheorem{example}[thm]{Example}
\newtheorem{remark}[thm]{Remark}
\newtheorem{definition}[thm]{Definition}
\newtheorem{definitions}[thm]{Definitions}
\newtheorem{conjecture}[thm]{Conjecture}
\newtheorem{question}[thm]{Question}
\numberwithin{equation}{section}
\numberwithin{figure}{section}
\newcommand{\Spec}{\mathrm{Spec}}
\newcommand{\Hilb}{\mathrm{Hilb}}
\newcommand{\Def}{\mathrm{Def}}
\newcommand{\Hom}{\mathrm{Hom}}
\newcommand{\Ext}{\mathrm{Ext}}
\newcommand{\Tor}{\mathrm{Tor}}
\newcommand{\Gr}{\mathrm{Gr}}
\newcommand{\GL}{\mathrm{GL}}
\newcommand{\soc}{\mathrm{soc}}
\newcommand{\Ann}{\mathrm{Ann}}
\renewcommand{\AA}{\mathbb{A}}
\newcommand{\N}{\mathbb{N}}
\newcommand{\Z}{\mathbb{Z}}
\newcommand{\kk}{{\Bbbk}}
\newcommand{\mm}{{\mathfrak{m}}}
\newcommand{\bfa}{\mathbf{a}}
\newcommand{\bfb}{\mathbf{b}}
\newcommand{\bfc}{\mathbf{c}}
\newcommand{\bfd}{\mathbf{d}}
\newcommand{\bfe}{\mathbf{e}}
\newcommand{\bfg}{\mathbf{g}}
\newcommand{\bfp}{\mathbf{p}}
\newcommand{\bfq}{\mathbf{q}}
\newcommand{\bfv}{\mathbf{v}}
\newcommand{\bfw}{\mathbf{w}}
\newcommand{\bfs}{\mathbf{s}}
\newcommand{\bft}{\mathbf{t}}
\newcommand{\ua}{{\underline{\alpha}}}
\newcommand{\ub}{{\underline{\beta}}}
\newcommand{\mcO}{{\mathcal{O}}}
\newcommand{\msI}{{\mathscr{I}}}
\newcommand{\msO}{{\mathscr{O}}}
\newcommand{\n}{\mathrm{n}}
\newcommand{\p}{\mathrm{p}}
\newcommand{\ppn}{\mathrm{ppn}}
\newcommand{\pnp}{\mathrm{pnp}}
\newcommand{\npp}{\mathrm{npp}}
\newcommand{\nnp}{\mathrm{nnp}}
\newcommand{\npn}{\mathrm{npn}}
\newcommand{\pnn}{\mathrm{pnn}}
\newcommand{\Pf}{\mathrm{Pf}}
\newcommand{\gen}{\mathrm{gen}}
\newcommand{\tri}{\mathrm{tri}}
\DeclareMathOperator{\lci}{lci}%
\DeclareMathOperator{\Sym}{Sym}%
\newcommand{\sym}{\operatorname{sym}}
\newcommand{\sm}{\operatorname{sm}}
\DeclareMathOperator{\im}{im}%
\DeclareMathOperator{\coker}{coker}%
\newcommand{\nestedHilbX}[2]{\Hilb^{(#1, #2)}(X)}
\newcommand{\nestedlcilocus}[2]{\Hilb^{(#1, #2)}_{\star, \lci}(X)}
\newcommand{\mcZ}{\mathcal{Z}}
\newcommand{\onto}{\twoheadrightarrow}
\newcommand{\powerseries}{\kk[\![t]\!]}
\newcommand{\cK}{\mathcal{K}}%
\newcommand{\monomialIdealPicture}[1]{
    \begin{tikzpicture}[x=(220:1cm), y=(-40:1cm), z=(90:0.707cm),scale=0.5]
        \foreach \m [count=\y] in {#1}{
          \foreach \n [count=\x] in \m {
          \ifnum \n>0
              \foreach \z in {1,...,\n}{
                \draw [fill=orange!30] (\x+1,\y,\z) -- (\x+1,\y+1,\z) -- (\x+1, \y+1, \z-1) -- (\x+1, \y, \z-1) -- cycle;
                \draw [fill=orange!40] (\x,\y+1,\z) -- (\x+1,\y+1,\z) -- (\x+1, \y+1, \z-1) -- (\x, \y+1, \z-1) -- cycle;
                \draw [fill=orange!10] (\x,\y,\z)   -- (\x+1,\y,\z)   -- (\x+1, \y+1, \z)   -- (\x, \y+1, \z) -- cycle;  
              }
             \fi
          }
        }
    \end{tikzpicture}
}
\newcommand{\monomialIdealPictureSmall}[1]{
    \begin{tikzpicture}[x=(220:1cm), y=(-40:1cm), z=(90:0.707cm),scale=0.35]
        \foreach \m [count=\y] in {#1}{
          \foreach \n [count=\x] in \m {
          \ifnum \n>0
              \foreach \z in {1,...,\n}{
                \draw [fill=orange!30] (\x+1,\y,\z) -- (\x+1,\y+1,\z) -- (\x+1, \y+1, \z-1) -- (\x+1, \y, \z-1) -- cycle;
                \draw [fill=orange!40] (\x,\y+1,\z) -- (\x+1,\y+1,\z) -- (\x+1, \y+1, \z-1) -- (\x, \y+1, \z-1) -- cycle;
                \draw [fill=orange!10] (\x,\y,\z)   -- (\x+1,\y,\z)   -- (\x+1, \y+1, \z)   -- (\x, \y+1, \z) -- cycle;  
              }
             \fi
          }
        }
    \end{tikzpicture}
}
\newcommand{\degold}{\dim_{\kk}}%
\begin{document}

\author[J.\,Jelisiejew, R.\,Ramkumar, A. \,Sammartano]{Joachim~Jelisiejew, Ritvik~Ramkumar and Alessio~Sammartano}
\address{(Joachim Jelisiejew)
    Faculty of Mathematics, Informatics, and Mechanics,
    University of Warsaw, Banacha 2, 02-097, Warsaw, Poland}
\email{j.jelisiejew@uw.edu.pl}
\address{(Ritvik Ramkumar) Department of Mathematics\\University of Notre Dame\\South Bend, IN\\USA}
\email{rramkuma@nd.edu}
\address{(Alessio Sammartano) Dipartimento di Matematica \\ Politecnico di Milano \\ Milan \\ Italy}
\email{alessio.sammartano@polimi.it}

\title{The Hilbert scheme of points on a threefold:  broken
Gorenstein structures and linkage}

\begin{abstract}
We investigate the Hilbert scheme of points on a smooth threefold.
We introduce a notion of broken Gorenstein structure for finite schemes, 
and show that its existence guarantees smoothness on the Hilbert scheme. 
Moreover, we conjecture that it is exhaustive: every smooth  point admits a broken Gorenstein structure. 
We give an explicit characterization of the smooth points on the Hilbert scheme of $\AA^3$ corresponding to monomial ideals.
We investigate the nature of the singular points, 
and prove several conjectures by Hu. 
Along the way, we obtain a number of additional  results,
related to linkage classes, 
nested Hilbert schemes, and a bundle on the Hilbert scheme of a surface.
\end{abstract}
\maketitle

\section{Introduction}

The Hilbert scheme of $d$ points on a smooth variety $X$, denoted by
    $\Hilb^d(X)$, parametrizes zero-dimensional subschemes of $X$ of
    degree $d$. When $X$ is a smooth
    surface, the Hilbert scheme $\Hilb^d(X)$ is irreducible and smooth
    \cite{fogarty}.  This result has laid the groundwork for important applications across numerous fields:
combinatorics~\cite{Haiman_macdonald, haiman_factorial_conjecture},
enumerative geometry \cite{nakajima_lectures_on_Hilbert_schemes, Grojnowski,
Nakajima__instanton_counting, Goetsche}, moduli spaces of sheaves
\cite{Huybrechts_Lehn, MNOP, MNOPTwo}, 
 topology and $K$-theory
\cite{Ellingsrud_Stromme__On_the_homology, Ellingsrud_Stromme_two}, 
and knot
theory~\cite{Oblomkov_Shende, Oblomkov_Rozansky, GNR}.
However, when $\dim(X) \geq 4$, the Hilbert scheme has generically
    non-reduced components and is expected to exhibit extreme pathological
    behavior~\cite{Jelisiejew__Pathologies, Jelisiejew_Fourfolds}.

When $X$ is a smooth threefold, there is an interesting mixture of irregularities and structure, though very few results are known.  
Much of the effort has  focused  on its tangent spaces, 
	particularly its maximal dimension \cite{Briancon_Iarrobino__dimension_of_punctual_Hilbert_scheme, Sturmfels__counterexamples, RS22, Rezaee__Conjectural_most_singular, mackenzie2025proof}
and the parity conjecture \cite{MNOP,
    Ramkumar_Sammartano_parity, Graffeo_Giovenzana_Lella}. 
The superpotential   description~\cite{Behrend} restricts the possible singularities.
The most interesting component is   the smoothable component $\Hilb^{d, \sm}(X)$, which parametrizes  tuples of $d$ points.
This is the only component of  $\Hilb^d(X)$ if $d \leq 11$,
but not for $d\geq 78$ \cite{iarrobino_reducibility, iarrobino_compressed_artin}. 
Moreover,   the smoothable component is quite special as it is conjectured to be
normal and Cohen-Macaulay
\cite[Conjecture~5.2.1]{Haiman_macdonald}
  and expected to be 
   the only generically reduced component.  
It is smooth if $d\leq
    3$,  
    and its singularities are   known only for  $4 \leq d \leq 6$ 
    \cite{Katz__4points, H23, Hauenstein_Manivel_Szendroi__Katz_bis}.

\begin{question}\label{que:smooth}
Let $X$ be a smooth threefold. What are the smooth points of $\Hilb^d(X)$ that lie on the smoothable component?
\end{question}

Since this question is local on $X$, we may assume that $X = \AA^3$. 
The known classes of smooth points are all classical; they emerge from structure theorems for free resolutions. These include complete and almost complete intersections, algebras with embedding dimension two and Gorenstein algebras, see \cite[\S7-\S8]{hartshorne_deformation_theory}. Additionally, one can form the link of any of these subschemes along a complete intersection, resulting in what are known as licci ideals, which also correspond to smooth points (see \Cref{ssec:linkage}). The classes arising from structure theorems do not completely cover the smooth locus, while we conjecture that licci ideals do (\Cref{main_conjecture}). 
However,
it is challenging to systematically produce licci elements within a given $\Hilb^d(\mathbb{A}^3)$.

One of the main goals of the current article is to propose an explicit answer to \Cref{que:smooth}.
To this end, we introduce a new concept, which we call \emph{broken Gorenstein structures}, and show that their existence ensures smoothness on the Hilbert scheme.
Moreover, we conjecture that the converse statement also holds, and we verify this conjecture in the case of monomial schemes.

We now turn to a detailed description of the results presented in this paper.

\subsection{Monomial ideals}
To   elucidate our framework, we start with the points corresponding to monomial subschemes in $\Hilb^d(\AA^3)$. 
These points lie on the smoothable component \cite{CEVV}. 
They play a key role in enumerative geometry~\cite{Behrend__Fantechi} and are essential in torus actions and Bia{\l}ynicki-Birula cells \cite{Ellingsrud_Stromme__On_the_homology}.
They are also significant in combinatorics, where their count is governed by the well-known MacMahon formula, which is continuously being refined~\cite{MacMahon1, MacMahon2}.

Let $S = \kk[x, y, z]$ and, by a slight abuse of notation, 
we identify monomials of $S$ with their exponent vectors in $\mathbb{N}^3$ and will freely
interchange between the two. 
For a monomial ideal $I \subseteq S$, we define
its \textbf{staircase} $E_I \subseteq \mathbb{N}^3$ to be the monomials of $S$
that are not in $I$. 
The \textbf{socle} $\soc(S/I)$ is spanned by the maximal elements of $E_I$ with respect to the usual partial order.
For example, here are the staircase diagrams for $I_1 = (x^2, xy, xz, y^2, yz, z^3)$ and $I_2 = (x^2, xy, xz, y^2, z^2)$, respectively:
\begin{equation}\label{eq:monomialExample}
    \underset{\displaystyle{E_{I_1} = \{1, \underline{x}, \underline{y}, z, \underline{z^2}\}}}{\monomialIdealPicture{{3,1},{1,0}}}
\qquad \qquad \qquad \qquad \qquad
\underset{\displaystyle{E_{I_2} = \{1, \underline{x}, {y}, z, \underline{yz}\}}}{\monomialIdealPicture{{2, 1},
{2, 0}}}
\end{equation}
The underlined elements of $E_{I_1}$ and $E_{I_2}$  are in the socle.

The following definition
encapsulates our 
main combinatorial insight.
\begin{definition}
Let $I \subseteq S$ be a  cofinite monomial ideal.
A \textbf{singularizing triple} for $I$ is a triple of monomials 
$\{\bfa, \bfb, \bfc\} \subseteq \soc(S/I)$ 
such that
\[
\bfa_1 >\bfb_1, \bfc_1,
\quad
\bfb_2 > \bfa_2, \bfc_2,
\quad
\bfc_3 > \bfa_3, \bfb_3.
\]

\end{definition}

\vspace*{-0.2cm}

This notion allows us to formulate a  classification of smooth monomial points in the Hilbert scheme.

\begin{thm}[\Cref{ThmSmoothMonomialClassification}] \label{ThmSmoothMonomialIntro}
    Let $I \subseteq S = \kk[x, y, z]$ be a monomial ideal and $[S/I] \in \Hilb^d(\AA^3)$ the corresponding point. The following conditions are equivalent
\begin{enumerate}
    \item\label{monomialItem1} The point $[S/I]$ is a smooth point of the Hilbert scheme.
\item\label{monomialItem2} The ideal $I$ admits no singularizing triple.
\item\label{monomialItem3} The ideal $I$ is licci.
\end{enumerate}
\end{thm}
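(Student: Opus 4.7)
The plan is to establish the three equivalences in the cyclic order $(3) \Rightarrow (1) \Rightarrow (2) \Rightarrow (3)$.

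The implication $(3) \Rightarrow (1)$ is classical and follows from liaison theory: a complete intersection of length $d$ is a smooth point of $\Hilb^d(\AA^3)$ with tangent space dimension exactly $3d$, and linkage along a regular sequence of length three induces an isomorphism between étale neighborhoods of the corresponding points in the Hilbert scheme. Iterating, any licci ideal is a smooth point, so this direction can be disposed of by citing the linkage background referenced in the introduction.

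For $(1) \Rightarrow (2)$, I would argue by contrapositive: given a singularizing triple $\{\bfa,\bfb,\bfc\} \subseteq \soc(S/I)$, the goal is to show $\dim_{\kk} \Hom_S(I,S/I) > 3d$. Since $[S/I]$ lies on the smoothable component of dimension $3d$, this forces $[S/I]$ to be a singular point of $\Hilb^d(\AA^3)$. The key input is the $\mathbb{Z}^3$-graded description of $\Hom_S(I,S/I)$ for monomial $I$, whose graded pieces can be enumerated directly from the staircase $E_I$ and the minimal generators of $I$. After identifying a $3d$-dimensional subspace of ``smoothing'' tangent vectors (those realized by the universal family on the smoothable component), the dominance conditions on the triple allow us to construct three additional independent homomorphisms $\varphi_{\bfa}, \varphi_{\bfb}, \varphi_{\bfc} \in \Hom_S(I,S/I)$, each supported in a multidegree associated with the coordinate in which the corresponding socle element strictly dominates the other two. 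Checking that these three maps are genuine $S$-module homomorphisms not captured by the smoothing directions is a direct combinatorial verification using the strict inequalities in the definition of the triple.

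For $(2) \Rightarrow (3)$, I would proceed by strong induction on the colength $d$. Complete intersections and almost complete intersections (both known to be licci) provide the base. For the inductive step, use the absence of a singularizing triple to select — possibly after a permutation of variables — a complete intersection $J = (f_1, f_2, f_3) \subseteq I$ whose choice is dictated by the shape of the staircase: in the combinatorial regime cut out by $(2)$, two of the coordinate directions admit a single ``dominating'' socle element, which allows one to pick $J$ transverse to the third direction. Then show that $(J:I)$ has strictly smaller colength and again lacks a singularizing triple in an appropriate sense, so the induction hypothesis yields $(J:I)$ licci and hence $I$ licci as well.

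The main obstacle is the implication $(2) \Rightarrow (3)$. The linked ideal $(J:I)$ typically leaves the monomial world, so the combinatorial hypothesis must be tracked through an invariant stable under linkage — plausibly via the broken Gorenstein structures alluded to in the introduction — and one must verify that this structure descends properly through the induction. By contrast, $(3) \Rightarrow (1)$ is routine from liaison theory, and $(1) \Rightarrow (2)$ reduces to a careful but direct multigraded count producing the three extra tangent vectors from the triple.
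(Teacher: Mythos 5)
Your overall architecture ($(3)\Rightarrow(1)\Rightarrow(2)\Rightarrow(3)$) is sound, and two of the three legs match the paper. The implication $(3)\Rightarrow(1)$ is indeed dispatched by citing liaison theory, exactly as in the paper. For $(1)\Rightarrow(2)$ your strategy (contrapositive; a singularizing triple forces extra tangent directions) is also the paper's, but be aware that the execution is considerably heavier than ``a direct combinatorial verification'': the paper relies on the $\Z^3$-graded signature decomposition of $T(I)$ from \cite{RS22}, which reduces smoothness to the vanishing of the three doubly-negative parts $T_{\nnp},T_{\npn},T_{\pnn}$, and then produces one nonzero vector in each by constructing a bounded connected component of $(I+\bfe)\setminus I$. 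That construction requires replacing $\bfc$ by a socle element maximal in the third coordinate, a low-level/high-level analysis of planar slices of the staircase, and a separate rectangle lemma (\Cref{LemmaDoubleNegativePlane}); the extra tangent vectors are not simply socle maps read off from the triple. Still, this is a fixable matter of detail, not a wrong approach.

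The genuine gap is $(2)\Rightarrow(3)$, and you have correctly identified it as the main obstacle without closing it. Your plan — choose a complete intersection $J\subseteq I$ and induct on the colength of $(J:I)$ — runs into exactly the problem you name: $(J:I)$ leaves the monomial world, and there is no evident invariant to carry through the induction. The paper avoids this entirely. It does not link along a complete intersection inside $I$; instead it picks a single monomial $f=z^{\bfs^1_3+1}$ (where $\bfs^1$ is the distinguished socle element furnished by \Cref{PropNoSingularizingTriple}) such that $R/(f)$ is Gorenstein, and observes that $(I:f)$ is again a \emph{monomial} ideal with socle $f^{-1}\bfs^2,\dots,f^{-1}\bfs^\tau$, hence again without singularizing triples; the induction is on the socle dimension and never leaves the monomial setting. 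Licci-ness then follows not from direct linkage along $J$ but from the Huneke--Polini--Ulrich lemma (\Cref{PropSameEvenLinkageClass}): if $I+(f)$ is Gorenstein then $I$ and $(I:f)$ lie in the same even linkage class. This is the external input your sketch is missing, and without it (or an equivalent device) the inductive step does not go through. The chain of quotients $(I:f)$ is precisely a broken Gorenstein structure without flips, so your instinct to route through that notion was right, but the bridge from ``broken Gorenstein without flips'' to ``licci'' is itself the HPU lemma, not something that follows formally.
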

\vspace*{-0.2cm}

A remarkable feature of this criterion is that smoothness is detected  directly  from the dual generators (cf. \Cref{ssec:macInv}) of $S/I$.
In general, one cannot expect to extract deformation-theoretic information simply by inspecting 
 generators or  dual generators,
except in some very special cases such as complete intersections or ideals with small type and small deviation
\cite[Theorem 6.2]{GNW__ADE}.

For instance, in~\eqref{eq:monomialExample}, the first ideal corresponds to a singular point, with a singularizing triple $  \{x, y, z^2\}$,  while the second ideal is a smooth point, since $\soc(S/I_2)$ contains only two monomials.
For a more complicated example,
of the following two staircases 
\begin{equation*}
    \monomialIdealPictureSmall{{7, 7, 7, 7, 7, 7, 7}, {7, 6, 6, 6, 6, 6, 1}, {7, 6, 5, 5, 5, 2, 1}, {7, 6,
       5, 4, 3, 2, 1}, {7, 6, 5, 3, 3, 2, 1}, {7, 6, 2, 2, 2, 2, 1}, {7, 1, 1, 1, 1,
       1, 1}}\hspace*{2cm}
       \monomialIdealPictureSmall{{7, 7, 7, 7, 7, 7, 1}, {6, 6, 6, 6, 6, 2, 1}, {6, 5, 5, 5, 3, 2, 1}, {6, 5,
       4, 3, 3, 2, 1}, {6, 5, 4, 2, 2, 2, 1}, {6, 5, 1, 1, 1, 1, 1}, {6}}
\end{equation*}

\vspace*{-0.3cm}
the first one gives a smooth point, 
whereas the second one gives a singular point.

After completing the first version of this paper, 
we became aware of the preprint
\cite{Huibregtse_monomial},
whose  main result, 
\cite[Theorem 10.3.1]{Huibregtse_monomial},
also provides a classification of smooth monomial points of $\Hilb^d(\AA^3)$,
 in terms of ``compound boxes'',  a certain recursive decomposition   of the staircase.
In fact, 
this result corresponds to 
the implication $\eqref{monomialItem1} \Leftrightarrow \eqref{monomialItem2}$ in \Cref{ThmSmoothMonomialIntro},
see  \Cref{RemHuibregtse}.

\subsection{Broken Gorenstein structures} \label{broken_intro}
We introduce the following  new notion.
\begin{definition}[Broken Gorenstein algebras]\label{def:brokenGorenstein}
Let $R$ be a finite $\kk$-algebra.
We say that $R$ has a \textbf{$0$-broken Gorenstein structure} if  $R$ is Gorenstein.
For    $k\geq 1$, a \textbf{$k$-broken Gorenstein structure} on $R$ consists of a
    short exact sequence of $R$-modules
$
        0\to \cK\to R\to R_0\to 0
 $
such that:
\begin{enumerate}
\item the algebra $R/\Ann(\cK)$ has a $(k-1)$-broken Gorenstein structure,
\item the algebra  $R_0$ is Gorenstein, and
\item the $R$-module $\cK$ is either cyclic or cocyclic.
\end{enumerate}
Recall that a finite $R$-module $M$ is {\bf cocyclic} if the dual $M^{\vee} = \Hom_\kk(M,\kk)$, with its natural $R$-module structure, is a cyclic $R$-module (see \Cref{ssec:zerodimensional}). 

A \textbf{broken Gorenstein structure} on $R$ is a $k$-broken  Gorenstein structure for some $k$.
A \textbf{broken Gorenstein structure without flips} is defined inductively in the
same way, but with the additional requirement that $\cK$ is always cyclic.
Finally, if $R$ admits a broken Gorenstein structure, we call it a
\textbf{broken Gorenstein algebra}.
\end{definition}

Here is our main result regarding broken Gorenstein algebras which are
quotients of $S = \kk[x, y, z]$.

\begin{thm}[{\Cref{ref:smoothnessOfSmoothableBroken:cor}}]\label{ref:introSmoothness:thm}
If $R = S/I$ is a smoothable finite algebra with a broken Gorenstein structure, then the corresponding point $[R] \in \Hilb(\mathbb{A}^3)$ is smooth.
\end{thm}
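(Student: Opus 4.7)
The plan is to proceed by induction on $k$, the depth of the broken Gorenstein structure on $R$. For the base case $k = 0$, $R$ is Gorenstein and smoothable. That any such point of $\Hilb^d(\AA^3)$ is smooth is classical, following from the Buchsbaum--Eisenbud structure theorem: codimension-three Gorenstein ideals are resolved by Pfaffian complexes, which forces the tangent space at $[R]$ to have the expected dimension $3d$.

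For the inductive step, $R$ has a $k$-broken Gorenstein structure via $0\to \cK \to R \to R_0 \to 0$ with $R_0$ Gorenstein, $\cK$ cyclic or cocyclic, and $R':=R/\Ann(\cK)$ admitting a $(k-1)$-broken Gorenstein structure. Set $d=\dim_\kk R$, $d_0=\dim_\kk R_0$, $d'=\dim_\kk R'$; in the cyclic case $\cK \cong R'$, so $d=d_0+d'$. The surjection $R\onto R_0$ defines a point $\xi$ of the nested Hilbert scheme $\nestedHilbX{d_0}{d}$, sitting in the diagram
\[
  \Hilb^d(\AA^3) \xleftarrow{\pi} \nestedHilbX{d_0}{d} \xrightarrow{\pi_0} \Hilb^{d_0}(\AA^3).
\]
I would restrict to the open sublocus $\mathcal U \subseteq \nestedHilbX{d_0}{d}$ where the kernel is cyclic (respectively cocyclic); openness comes from semicontinuity of the number of generators, respectively of the socle dimension.

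The body of the argument is to show that $\xi$ is a smooth point of $\mathcal U$ and that $\pi|_\mathcal U$ is smooth at $\xi$; together these imply that $[R]$ is smooth on $\Hilb^d(\AA^3)$. For smoothness of $\mathcal U$ at $\xi$, one constructs a forgetful morphism $\mathcal U \to \Hilb^{d_0}(\AA^3) \times \Hilb^{d'}(\AA^3)$ remembering the pair $(R_0, R')$ and shows it is an affine bundle near $\xi$, with fiber coordinates parametrizing the framing of the cyclic generator of $\cK$; the target is smooth at $([R_0], [R'])$ by the base case and the inductive hypothesis (applied to $R'$, once its smoothability is established), so $\xi$ is smooth on $\mathcal U$. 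Smoothness of $\pi$ at $\xi$ is then a deformation-theoretic forgetful statement, reducing to the vanishing of certain $\Ext^1_R$ obstruction classes, which the Gorenstein property of $R_0$ combined with the cyclicity of $\cK$ should force via Matlis duality.

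The main obstacles I foresee are twofold. First, one must verify that $R'$ is itself smoothable in order to invoke the induction hypothesis; the natural strategy is to lift a chosen smoothing of $R$ across $\cK$, producing a flat deformation of $R'$ in parallel. Second, the cocyclic case is not directly analogous to the cyclic one and must be handled by Matlis duality or Macaulay inverse systems, translating cocyclic $R$-modules into cyclic ones over a dual algebra; this conceptual reduction is clean but adds bookkeeping. The most delicate step is the tangent-obstruction computation on the cyclic locus of the nested Hilbert scheme, where the Gorenstein and cyclic/cocyclic hypotheses interact to give the needed vanishing; this is where the heart of the argument lies.
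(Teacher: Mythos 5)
Your overall strategy is genuinely different from the paper's, and it has gaps that I do not see how to close. The paper's proof is a purely homological induction: from the sequence $0\to\cK\to R\to R_0\to 0$ it derives, via a web of long exact sequences of $\Ext_S$-groups, Serre duality, and the bicanonical module, the inequality $\delta_R \leq \delta_{R/\Ann(\cK)} + 2\bigl(\degold \Sym^2_R\cK - \degold\cK\bigr)$ on tangent excesses (\Cref{ref:canitbetrue:thm}), where the correction term is $\leq 0$ by the bicanonical bound of \Cref{ref:brokenGorensteinCanonical:cor}. The induction therefore runs on the inequality $\delta_R\leq\delta_{R/\Ann(\cK)}$ alone and requires \emph{no} smoothability of the intermediate quotients; smoothability of $R$ enters only at the very last step to convert $\dim_\kk T_{[R]}\leq 3d$ into smoothness. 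Your induction, by contrast, needs $R'=R/\Ann(\cK)$ to be smoothable, and your proposed fix --- lifting a smoothing of $R$ to a flat deformation of $\cK\simeq R'$ --- is precisely the assertion that deformations of $R$ induce deformations of the submodule $\cK$, which is not automatic; note that the paper proves smoothness of $\Def_{\cK\subseteq R}\to\Def_{\cK}$ (the other forgetful map) only as a \emph{consequence} of the equality case of the tangent bound.

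The two central geometric claims are also unsubstantiated. First, the map $\mathcal{U}\to\Hilb^{d_0}(\AA^3)\times\Hilb^{d'}(\AA^3)$ is not an affine bundle in any evident way: recovering $R$ from $(R_0,R')$ requires extension and gluing data beyond a ``framing of the generator of $\cK$'' --- already in the no-flips case \Cref{theorem_pfaffian_structure} shows the fiber is governed by pairs of skew-symmetric presentation matrices with a distinguished Pfaffian, and nothing guarantees this fiber is smooth or of constant dimension; in the cocyclic case there is moreover no ``dual algebra'' to pass to, since $\omega_R$ is not a ring unless $R$ is Gorenstein. Second, smoothness of $\pi$ at $\xi$ is exactly the surjectivity statement in \Cref{ref:canitbetrue:thm}(1) (via \cite[Corollary~A.4]{jelisiejew_Mandziuk}), and the paper can only establish it \emph{after} knowing $\delta_R=0$, which in turn uses smoothability of $R$ together with the tangent bound you are trying to prove. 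Your plan thus reverses the logical order: the nested Hilbert scheme statements are corollaries of the tangent space computation, not available inputs to it. To repair the argument you would essentially have to reproduce the paper's $\Ext$-computation (\Cref{ref:cokernel:lem} and \Cref{ref:canitbetrue:thm}), at which point the nested Hilbert scheme scaffolding becomes unnecessary.
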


Since the definition of  a broken Gorenstein algebra is
rather involved, we  illustrate it with a couple of examples. 
 We begin with the case of a $k$-broken Gorenstein structure without flips. 
In this case, $\cK\subseteq R $ is a principal ideal, say $\cK = \alpha_1 R$.
From the $(k-1)$-broken Gorenstein structure on $\cK \simeq R/\Ann(\cK)$
we obtain an exact sequence
$
0\to \cK' \to \cK \to R_1 \to 0
$
such
that $\cK'\subseteq \cK \subseteq R$ is also a principal ideal,  say $\cK' = \alpha_1\alpha_2 R$.
Continuing in this fashion, we see that a $k$-broken Gorenstein structure
without flips is equivalent to a flag of principal ideals
\begin{equation}\label{eq:noflipsFlag}
    0 \subsetneq \alpha_k\alpha_{k-1} \cdots \alpha_1 R\subsetneq  \ldots
    \subsetneq \alpha_1\alpha_2R \subsetneq \alpha_1 R \subsetneq R,
\end{equation}
where the subquotients $(\alpha_{i} \ldots \alpha_1) R/
(\alpha_{i+1}\alpha_{i} \ldots \alpha_1) R$
correspond to
Gorenstein algebras (where $\alpha_0 := 1$, $\alpha_{k+1} := 0$).
The class of algebras that admit such a flag is
large:
in addition to  Gorenstein algebras, 
it includes   algebras $R =\kk[x, y]/I$,
and monomial algebras $R = \kk[x, y, z]/I$ 
 such that $[R]$ is smooth.
Informally,
the structure encodes the ``broken up'' Gorenstein
subquotients, hence the name.

For example, for the algebra $R = S/I_2$ with $I_2$ as in \eqref{eq:monomialExample},
the inclusions $0 \subseteq xR \subseteq R$ give a $1$-broken Gorenstein structure. Here is an example of an algebra that does not admit a broken Gorenstein structure.

\begin{example}
\label{ex} 
Let $R = \kk[x, y, z]/(x, y, z)^2$. Up to a change of coordinates, the only surjections $R \onto R_0$ with $R_0$ being Gorenstein are
    $R \onto \kk$ and $R \onto \kk[x]/(x^2)$. 
    The kernels of these surjections, $(x, y, z)R$ and $(y, z)R$, are neither cyclic nor cocyclic.
    Thus, $R$ does not admit a broken Gorenstein  structure.
\end{example}

The treatment of broken Gorenstein structures with flips is more intricate (see  \Cref{sec:BGS}).
We now describe an effective method for constructing
broken Gorenstein algebras using Macaulay's inverse systems.

\begin{example}[\Cref{exampleFlip}]\label{example_two_step}

Let $S = \kk[x, y, z]$ and let $P = \kk[X, Y, Z]$ be another polynomial ring, 
viewed as  an
$S$-module via the contraction action
 (see \Cref{ssec:macInv} for details).
Let $f\in P$ and $g\in \kk[Y, Z]\subseteq P$ be polynomials. 
The algebra $R = S/\Ann(f, g)$ admits a  broken Gorenstein structure.
 Taking $f = X$, $g = YZ$, we
    recover the second monomial ideal in Example ~\eqref{eq:monomialExample}. See also \Cref{example_flips}.
\end{example}

In \Cref{ref:smoothnessOfSmoothableBroken:cor} we also obtain two
related results: that $[R \onto R_0]$ is a smooth point of the smoothable component
of the nested Hilbert scheme and that all infinitesimal deformations of $\cK$
can be embedded in deformations of $R$. Both of these facts are quite surprising, even with the prior assumption that $[R]$ is smooth, hinting at the potential for further structure to be uncovered.

We conjecture that broken Gorenstein structures capture all the smooth points of $\Hilb^d(\mathbb{A}^3)$ that lie on the smoothable component. 

\begin{conjecture} \label{main_conjecture}
    Let $S = \kk[x,y,z]$ and $[S/I] \in \Hilb^d(\AA^3)$ be a smoothable point. Then the following are equivalent
    \begin{enumerate}
        \item\label{it:main_conj:brokenGor} The algebra $S/I$ admits a broken Gorenstein structure.
        \item\label{it:main_conj:smooth} The point $[S/I]$ is smooth on the Hilbert scheme.
        \item\label{it:main_conj:licci} The ideal $I$ is licci.
    \end{enumerate}
\end{conjecture}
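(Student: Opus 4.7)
The three conditions decompose naturally. The implication (\ref{it:main_conj:brokenGor})$\Rightarrow$(\ref{it:main_conj:smooth}) is precisely \Cref{ref:introSmoothness:thm}, and (\ref{it:main_conj:licci})$\Rightarrow$(\ref{it:main_conj:smooth}) follows from the classical fact recalled in \Cref{ssec:linkage}: linkage along complete intersections preserves Hilbert-scheme smoothness, and complete intersections are themselves smooth points. So what remains is to produce a broken Gorenstein structure, or a licci chain, from the complementary hypothesis. Observe that \Cref{ThmSmoothMonomialClassification} already settles all three equivalences in the monomial case, which should be the base of any inductive argument.

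For (\ref{it:main_conj:licci})$\Rightarrow$(\ref{it:main_conj:brokenGor}) my plan is to argue by induction on the length of a licci chain $I = I_0, I_1, \ldots, I_n = \mathfrak{c}$ with $I_{j+1} = \mathfrak{c}_j : I_j$ and $I_n$ a complete intersection. Given such a chain, one would use the linked ideal $I_1 = \mathfrak{c}_0 : I$ together with the duality $I/\mathfrak{c}_0 \cong \omega_{S/I_1}$ of Artinian linkage to identify an element $f \in S$ whose image $\bar f \in R = S/I$ yields an ideal extension $J = I + (f)$ with the two properties that $R_0 = S/J$ is Gorenstein and $\Ann_R(\bar f R) = (I:f)/I$ corresponds to a shorter licci ideal $(I:f)$. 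The cyclic module $\cK = \bar f R$ then supplies the short exact sequence required by \Cref{def:brokenGorenstein}, and the inductive hypothesis applied to $S/(I:f)$ produces a $(k-1)$-broken Gorenstein structure on $R/\Ann(\cK)$. The technical content is proving that a suitable $f$ always exists; this is where one would invoke the Buchsbaum--Eisenbud structure theorem for Gorenstein ideals of codimension three together with the combinatorics of the socle of $R$, echoing the monomial analysis behind \Cref{ThmSmoothMonomialClassification} and making the flips in \Cref{def:brokenGorenstein} responsible for the two sides (ideal versus canonical module) of the linkage duality.

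For the converse direction (\ref{it:main_conj:smooth})$\Rightarrow$(\ref{it:main_conj:brokenGor}) the natural strategy is to degenerate $[S/I]$ flatly to a monomial ideal $J$ in a way that preserves smoothness, invoke \Cref{ThmSmoothMonomialClassification} to obtain a broken Gorenstein structure on $J$, and then lift it using the infinitesimal-deformation part of \Cref{ref:smoothnessOfSmoothableBroken:cor}, which asserts that every deformation of the kernel $\cK$ embeds in a deformation of $R$. The main obstacle --- and almost certainly the crux of the whole conjecture --- is that Gr\"obner degenerations do \emph{not} preserve Hilbert-scheme smoothness: the initial ideal of a smooth point generically acquires extra socle elements and hence singularizing triples in the sense of \Cref{ThmSmoothMonomialClassification}, so a generic weight lands at a singular monomial point. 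Circumventing this will probably require either a highly constrained choice of degeneration (for instance, along a path inside a Bia\l{}ynicki--Birula cell of the smoothable component whose limit is known in advance to be smooth), or a direct construction bypassing monomial reduction entirely: extracting the Gorenstein quotient $R \onto R_0$ from obstruction calculus in $T^2(R)$ and inducting on $\dim_\kk R$, while controlling the socle of successive kernels to guarantee the cyclic-or-cocyclic alternative at each stage.
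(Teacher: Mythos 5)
The statement you are proving is \Cref{main_conjecture}, which the paper itself leaves open: the authors prove only the implications $\eqref{it:main_conj:brokenGor}\implies\eqref{it:main_conj:smooth}$ (via \Cref{ref:introSmoothness:thm}) and $\eqref{it:main_conj:licci}\implies\eqref{it:main_conj:smooth}$ (the classical result of Huneke--Ulrich), and they establish the full equivalence only for monomial ideals (\Cref{ThmSmoothMonomialClassification}) and for $d\le 6$ by consulting Poonen's list. Your proposal correctly identifies these two known implications and the monomial base case, but everything beyond that is a research plan rather than a proof, and the gaps you yourself flag are genuine and unresolved.

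Concretely: in the direction $\eqref{it:main_conj:licci}\implies\eqref{it:main_conj:brokenGor}$, the existence of an element $f$ with $I+(f)$ Gorenstein \emph{and} $(I:f)$ licci with a strictly shorter chain is exactly the missing content; \Cref{PropSameEvenLinkageClass} runs in the opposite logical direction (it takes such an $f$ as a hypothesis and concludes even linkage), and the paper only uses it to prove the converse-flavored \Cref{ref:NoFlipsLicci:thm} (broken Gorenstein without flips $\implies$ licci). Note also that your induction, even if it succeeded, would only ever produce structures without flips, whereas \Cref{example_flips} exhibits an algebra admitting a broken Gorenstein structure but none without flips --- so a correct argument must account for cocyclic kernels, which your scheme does not. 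In the direction $\eqref{it:main_conj:smooth}\implies\eqref{it:main_conj:brokenGor}$, you correctly observe that Gr\"obner degeneration destroys smoothness, and the alternative you gesture at (lifting a broken Gorenstein structure from a special fiber via the deformation statement in \Cref{ref:smoothnessOfSmoothableBroken:cor}) is circular in the relevant case, since that corollary presupposes a broken Gorenstein structure on the algebra being deformed. In short, the proposal is an honest and well-informed plan of attack, but it does not constitute a proof, and the paper offers none either because the statement is a conjecture.
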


The implication $\eqref{it:main_conj:licci}\Rightarrow\eqref{it:main_conj:smooth}$ is
classical
\cite[6.4.4]{BuchweitzThesis}, \cite[Exercise 18.7]{hartshorne_deformation_theory}.
The implication
$\eqref{it:main_conj:brokenGor}\Rightarrow\eqref{it:main_conj:smooth}$ is
\Cref{ref:introSmoothness:thm}.

We establish this conjecture when $I$ is a monomial ideal.
Specifically, the absence of a singularizing triple is equivalent to having a
broken Gorenstein structure, which, in fact, can be chosen to have no flips
(\Cref{ThmSmoothMonomialClassification}). 
Furthermore, the conjecture holds
for $d \leq 6$, by a check with Poonen's list~\cite{Poonen},
see \Cref{RemPoonen}.

A crucial ingredient of our proof of \Cref{ref:smoothnessOfSmoothableBroken:cor} is the bicanonical module $\Sym^2_R \omega_R$ of an algebra $R$ (see \Cref{ssec:bicanonical}). 
Surprisingly, the bicanonical module manifests itself naturally in a variety of situations. For instance, we show that it yields an interesting torus-equivariant rank $d$ bundle on the Hilbert scheme $\Hilb^d(\mathbb{A}^2)$, and observe that it is connected to the \emph{Bodensee programme} of \cite[p.~9, arxiv version]{Michalek__Bodensee} through \Cref{ref:compatibility:lem}. A more comprehensive study of the bicanonical module will be forthcoming.

It is natural to ask which results about Gorenstein algebras generalize to broken Gorenstein algebras.
We take a step in this direction by presenting a Pfaffian description of
$I\subseteq S = \kk[x,y,z]$, whenever $S/I$ is equipped with a broken Gorenstein algebra without flips. 
In fact, we do this more generally for ideals of codimension three in a regular local ring $S$;
 the notion of broken Gorenstein without flips (\Cref{def:brokenGorenstein}) extends to this setup.

For a skew-symmetric matrix $A$, let $\Pf(A)_i$  denote the Pfaffian of the submatrix obtained by removing the $i$-th row and $i$-th column of $A$. Similarly, we define $\Pf(A)_{\geq 2}$ to be the ideal $(\Pf(A)_2,\Pf(A)_3, \ldots)$ and $\Pf(A)$ to be the ideal $(\Pf(A)_1, \Pf(A)_2, \ldots) = (\Pf(A)_1) + \Pf(A)_{\geq 2}$.

\begin{thm}[Structure theorem for broken Gorenstein structures without flips]\label{theorem_pfaffian_structure} 
Let $S$ be a regular local ring and $I \subseteq S$ be an ideal. Assume that $R = S/I$ has a
$k$-broken Gorenstein structure without flips whose subquotients have
codimension three.
Let $\alpha_1, \ldots ,\alpha_k$ be any lifts to $S$ of the elements defined
in~\eqref{eq:noflipsFlag}.
Then there exist $k+1$ skew-symmetric matrices $A_0,\dots,A_{k}$ with entries
in $S$ such that $\Pf(A_i)$ defines the codimension three Gorenstein
quotient $\alpha_i\cdots\alpha_1R/\alpha_{i+1}\cdots\alpha_1R$, that $\Pf(A_i)_1 = \alpha_{i+1}$ for $i=0, \ldots
,k-1$, and additionally
\begin{equation}\label{eq:brokenExplicitGens}
    I = \Pf(A_0)_{\geq 2} + \alpha_1\Pf(A_1)_{\geq 2} +
    \alpha_1\alpha_2\Pf(A_2)_{\geq 2} + \cdots + \alpha_1 \cdots
    \alpha_{k-1}\Pf(A_{k-1})_{\geq 2} + \alpha_1 \cdots \alpha_k\Pf(A_k).
\end{equation}
In particular, the ideal $I$ is determined by $A_0, \ldots ,A_k$ alone.
\end{thm}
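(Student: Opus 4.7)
The approach is to induct on $k$, using the Buchsbaum--Eisenbud structure theorem for codimension three Gorenstein ideals both as the base case and as the key input at each step. For $k = 0$, the algebra $S/I$ is Gorenstein of codimension three, and Buchsbaum--Eisenbud directly yields the required matrix $A_0$ with $I = \Pf(A_0)$, matching~\eqref{eq:brokenExplicitGens}.

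For the inductive step, the $k$-broken Gorenstein structure without flips provides a short exact sequence $0 \to \alpha_1 R \to R \to R_0 \to 0$, where $R_0 = S/(I + (\alpha_1))$ is Gorenstein of codimension three and the cyclic module $\alpha_1 R \cong S/(I : \alpha_1)$ inherits a $(k-1)$-broken structure. First, we apply Buchsbaum--Eisenbud to $I + (\alpha_1)$, obtaining a skew-symmetric matrix $\tilde A_0$ with $\Pf(\tilde A_0) = I + (\alpha_1)$. Since $\alpha_1 \notin I$ (otherwise the flag degenerates and we reduce to the $(k-1)$-case), $\alpha_1$ is a minimal generator of $I + (\alpha_1)$, so a suitable conjugation $\tilde A_0 \mapsto P \tilde A_0 P^T$ normalizes the presentation to achieve $\Pf(A_0)_1 = \alpha_1$. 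Next, each $\Pf(A_0)_j$ for $j \geq 2$ can be written as $i_j + s_j \alpha_1$ with $i_j \in I$; the elementary congruence $A_0 \mapsto (I + s_j E_{1j}) A_0 (I + s_j E_{1j})^T$ preserves $\Pf(A_0)_1$ and all Pfaffians with index $\neq 1, j$, while shifting $\Pf(A_0)_j$ by a multiple of $\alpha_1$. Iterating across $j$ yields $\Pf(A_0)_{\geq 2} \subseteq I$.

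With this normalization, the crucial decomposition is $I = \Pf(A_0)_{\geq 2} + \alpha_1 (I : \alpha_1)$. The inclusion $\supseteq$ is immediate. For $\subseteq$: given $f \in I \subseteq I + (\alpha_1) = \Pf(A_0)_{\geq 2} + (\alpha_1)$, we write $f = g + \alpha_1 h$ with $g \in \Pf(A_0)_{\geq 2} \subseteq I$; then $\alpha_1 h = f - g \in I$, so $h \in (I : \alpha_1)$. Applying the inductive hypothesis to the $(k-1)$-broken structure on $S/(I : \alpha_1)$ produces matrices $A_1, \ldots, A_k$ with the analogous formula for $(I : \alpha_1)$; multiplying by $\alpha_1$ and combining with $\Pf(A_0)_{\geq 2}$ yields~\eqref{eq:brokenExplicitGens}.

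For the converse, given matrices $A_0, \ldots, A_k$ satisfying the hypotheses, we define $I$ by~\eqref{eq:brokenExplicitGens} and reconstruct the broken Gorenstein structure inductively. One checks directly that $I + (\alpha_1) = \Pf(A_0)_{\geq 2} + (\alpha_1) = \Pf(A_0)$, so $R_0 := S/\Pf(A_0)$ is Gorenstein of codimension three, and the inductive hypothesis applied to $A_1, \ldots, A_k$ produces a $(k-1)$-broken structure on $S/I'$, where $I'$ is the ideal defined by the tail of~\eqref{eq:brokenExplicitGens}. Verifying $(I : \alpha_1) = I'$ then identifies the cyclic kernel $\alpha_1 R \cong S/I'$ and assembles the $k$-broken structure on $R$. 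I expect this colon computation to be the main obstacle of the converse: while $I' \subseteq (I : \alpha_1)$ is clear, the reverse inclusion requires controlling $(\Pf(A_0)_{\geq 2} : \alpha_1)$, which I anticipate handling through Gorenstein duality on $S/\Pf(A_0)$ together with the canonical role of $\alpha_1 = \Pf(A_0)_1$ in the Buchsbaum--Eisenbud resolution.
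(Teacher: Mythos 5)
Your argument for the forward direction is correct, but it takes a genuinely different route from the paper's. Both proofs perform the same normalization of $A_0$: congruences realize invertible linear operations on the vector of maximal Pfaffians, so one arranges $\Pf(A_0)_1=\alpha_1$ and $\Pf(A_0)_{\geq 2}\subseteq I$. From there the paper assembles all $k+1$ matrices into a single block matrix $M$, forms the complex $S^{\oplus N}\xrightarrow{M} S^{\oplus k+1}\to R\to 0$, and proves its exactness by degenerating along the Rees algebra of the filtration $I_\bullet$, finally reading \eqref{eq:brokenExplicitGens} off from $M$. You instead induct on $k$ via the identity $I=\Pf(A_0)_{\geq 2}+\alpha_1(I:\alpha_1)$, which follows immediately from the normalization, and feed $(I:\alpha_1)$ (the defining ideal of $\alpha_1 R$, which carries the $(k-1)$-broken structure with flag $\alpha_2,\ldots,\alpha_k$) back into the induction. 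Your route is shorter and more elementary; the paper's yields, in addition, an explicit free presentation of $R$ over $S$.

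The converse is where the genuine gap lies, and you have located it correctly but your proposed repair will not work. You need $(I:\alpha_1)=I'$, equivalently $(\Pf(A_0)_{\geq 2}:\alpha_1)\subseteq I'$. The Pfaffian relations $\Pf(A_0)\cdot A_0=0$ show that $(\Pf(A_0)_{\geq 2}:\alpha_1)$ contains the entries of the first row of $A_0$, and these have nothing to do with $I'$, which is built from the unrelated matrices $A_1,\ldots,A_k$. Concretely, take $k=1$, let $A_0$ be the $5\times 5$ Buchsbaum--Eisenbud matrix of $\Ann(X^2+YZ)=(x^2-yz,xy,xz,y^2,z^2)$, normalized so that $\alpha_1=x^2-yz$ and $\Pf(A_0)_{\geq 2}=(xy,xz,y^2,z^2)$, and let $\Pf(A_1)=(x^N,y^N,z^N)$ with $N\geq 2$. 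Then $(\Pf(A_0)_{\geq 2}:\alpha_1)=(y,z)\not\subseteq\Pf(A_1)$, and \eqref{eq:brokenExplicitGens} gives $I=(xy,xz,y^2,z^2,x^{N+2})$, of colength $N+5$, which cannot be $1$-broken with subquotients of degrees $5$ and $N^3$. So Gorenstein duality on $S/\Pf(A_0)$ alone cannot close the gap: the converse requires an actual compatibility between consecutive matrices (in the forward direction this compatibility is precisely $\Pf(A_0)_{\geq 2}\subseteq I$, which is arranged using $I$ itself). Note that the paper's written proof also establishes only the forward direction, so you should either add the missing hypothesis to the converse or restrict its conclusion.
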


This theorem provides  a common generalization of the Buchsbaum-Eisenbud and Hilbert-Burch theorems.

The matrices $A_0,\dots,A_{k}$ in  \Cref{theorem_pfaffian_structure} satisfy the following relation for all  $i = 0, \ldots, k-1$:

\begin{align} \label{pfaffian_condition}
\left(\sum_{j=0}^{i} \alpha_0\cdots\alpha_j\Pf(A_j)_{\geq 2} \right)\cap (\alpha_0\cdots\alpha_{i})	\,
\subseteq
\,
  \alpha_0\cdots\alpha_{i}\Pf(A_{i})_{\geq 2}+ (\alpha_0\cdots\alpha_{i+1}).
\end{align}

The following converse of \Cref{theorem_pfaffian_structure} holds:

\begin{prop}\label{Prop_pfaffian_converse}
 Let $A_0, \ldots ,A_k$ be skew-symmetric matrices with entries in a regular local ring $S$.
Assume that $\Pf(A_i)$ is a  Gorenstein ideal of codimension three for each $i$, 
and assume that  \Cref{pfaffian_condition} holds.
Then, formula~\eqref{eq:brokenExplicitGens} defines the ideal of a $k$-broken
Gorenstein algebra without flips.
\end{prop}

\subsection{Singular points}
As with the smooth locus, little is known classically about the
possible singularities of $\Hilb^d(\mathbb{A}^3)$. Let $S = \kk[x,y,z]$.
For a point $[S/I] \in \Hilb^d(\AA^3)$, let $T(I) := T_{[S/I]}\Hilb^d(\AA^3)$ denote the tangent space.
Recently, Hu~\cite[\S4.5]{H23}, motivated by conjectures on the Euler characteristic of certain tautological bundles on the Hilbert scheme, formulated an inspiring set of conjectures for the singularities of $\Hilb^d(\mathbb{A}^3)$. Two of the conjectures, which he has verified for $d \leq 7$, are as follows:

\begin{conjecture}[{\cite[4.25]{H23}}]\label{conj:hu1}
If $[S/I] \in \Hilb^d(\AA^3)$ is smoothable and singular, then $\dim_{\kk} T(I) \geq 3d+6$.
\end{conjecture}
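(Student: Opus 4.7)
The plan is to reduce to monomial ideals via Gröbner degeneration, apply \Cref{ThmSmoothMonomialClassification} to obtain a singularizing triple at the degeneration, and then exhibit six linearly independent ``excess'' tangent vectors coming from this triple.

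Given $[S/I]$ smoothable and singular, I would choose a generic one-parameter subgroup $\lambda \colon \mathbb{G}_m \to (\kk^{\ast})^3$ of the natural torus and flatly degenerate $I$ to its initial monomial ideal $I_0 = \mathrm{in}_{\lambda}(I)$. Since the family $\{[S/I_t]\}$ lies inside the smoothable component, $[S/I_0]$ is smoothable. If $[S/I_0]$ were smooth on $\Hilb^d(\AA^3)$, then upper semicontinuity of $\dim_\kk T(\cdot)$ combined with the lower bound $\dim_\kk T(I) \geq 3d$ (since $[S/I]$ lies on the $3d$-dimensional smoothable component) would force $[S/I]$ smooth as well, contradicting the hypothesis. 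Hence $[S/I_0]$ is also singular, and by \Cref{ThmSmoothMonomialClassification} the ideal $I_0$ admits a singularizing triple $\{\bfa, \bfb, \bfc\}$.

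For the monomial case, the tangent space $T(I_0) = \Hom_S(I_0, S/I_0)$ is $\mathbb{Z}^3$-graded. A canonical $3d$-dimensional subspace, corresponding to the tangent directions of the smoothable component at a monomial point, sits in multi-degrees $-\bfe_1, -\bfe_2, -\bfe_3$ (the ``motions'' of each of the $d$ monomials in $E_{I_0}$). From the singularizing triple I would construct, for each ordered pair $(\bfs, \bft)$ with $\bfs \neq \bft$ in $\{\bfa, \bfb, \bfc\}$, an explicit nonzero homomorphism $\phi_{\bfs, \bft} \in T(I_0)$ of multi-degree $\bft - \bfs$, sending an appropriately chosen minimal generator of $I_0$ adjacent to $\bfs$ in the staircase to the socle monomial $\bft$. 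The coordinate-wise inequalities $\bfa_1 > \bfb_1, \bfc_1$, $\bfb_2 > \bfa_2, \bfc_2$, $\bfc_3 > \bfa_3, \bfb_3$ are exactly what is needed to guarantee well-definedness of these six maps and that they are $S$-linear. The six resulting multi-degrees $\bft - \bfs$ are pairwise distinct and distinct from $-\bfe_1, -\bfe_2, -\bfe_3$, so $\phi_{\bfs, \bft}$ together with the standard $3d$ directions are linearly independent, giving $\dim_\kk T(I_0) \geq 3d + 6$.

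The main obstacle is then lifting the bound from $I_0$ back to $I$, because semicontinuity only yields $\dim_\kk T(I_0) \geq \dim_\kk T(I)$, the opposite of what is needed. To resolve this I would exploit the $\lambda$-equivariant structure of the degeneration: each $\phi_{\bfs, \bft}$ has an explicit $\lambda$-weight $\langle \lambda, \bft - \bfs\rangle$, and a Bia\l ynicki-Birula analysis of $\Hilb^d(\AA^3)$ near $I_0$ identifies the non-positive-weight tangent directions at $I_0$ with honest tangent directions at $[S/I]$. Choosing $\lambda$ carefully — or averaging over a one-parameter family of such $\lambda$ — should allow all six directions to lift. A more conceptual alternative, which sidesteps the degeneration, would be to construct the six excess directions at $[S/I]$ intrinsically using the Pfaffian formalism of \Cref{theorem_pfaffian_structure} and the bicanonical module machinery: failure of a broken Gorenstein structure produces structured obstructions whose deformation-theoretic avatars should be the sought-for six extra tangent vectors.
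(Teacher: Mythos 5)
Your reduction breaks down at exactly the point you flag, and neither proposed repair is an argument. Tangent space dimension is upper semicontinuous, so the Gr\"obner degeneration gives $\dim_{\kk} T(I) \le \dim_{\kk} T(I_0)$; establishing $\dim_{\kk} T(I_0) \ge 3d+6$ (which is \Cref{ThmSingularMonomial}, already proved in the paper) therefore yields no lower bound at $[S/I]$. The Bia{\l}ynicki--Birula fix does not close this: the non-positive-weight part of $T(I_0)$ is identified with the tangent space of the BB \emph{cell} at its fixed point, not with $T(I)$ at another point of that cell, and there is no mechanism transporting individual tangent vectors from $[S/I_0]$ to $[S/I]$. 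Worse, $\lambda$ is constrained by the requirement $\mathrm{in}_\lambda(I)=I_0$, so you cannot freely tune its sign pattern against the weights $\bft-\bfs$ of your six maps, and ``averaging over a family of $\lambda$'' has no meaning for tangent vectors based at different points. The second alternative (Pfaffians/bicanonical modules) is a research program, not a proof. Indeed, the paper itself states \Cref{conj:hu1} as an open conjecture and proves it \emph{only} for monomial ideals; a correct version of your lifting step would settle the conjecture in general.

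A secondary problem lies in your count of excess directions in the monomial case (though you could simply cite \Cref{ThmSingularMonomial} there). The tangent space to the smoothable component at a monomial point is not concentrated in the multidegrees $-\bfe_1,-\bfe_2,-\bfe_3$, so exhibiting six homomorphisms in other multidegrees does not show they are independent of a $3d$-dimensional subspace. The paper's actual mechanism is the duality of \Cref{theorem_smooth_monomials}: one must produce a nonzero \emph{doubly-negative} vector in each of the three signatures $\nnp$, $\npn$, $\pnn$, and each such vector forces a paired partner, giving $2\cdot 3=6$ excess. Your maps $\phi_{\bfs,\bft}$ of multidegree $\bft-\bfs$ need not be doubly negative (e.g.\ $\bfb-\bfa$ has negative first and positive second coordinate, but its third coordinate has no controlled sign), and the paper's construction of the $\nnp$ vector requires a careful re-choice of $\bfc$ and a delicate connected-component argument rather than a socle map read off directly from the triple.
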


\begin{conjecture}[{\cite[4.31]{H23}}] \label{conj:hu2}
If $[S/I] \in \Hilb^d(\AA^3)$ is smoothable with $\dim_{\kk} T(I) = 3d+6$, then the singularity at $[S/I] \in \Hilb^d(\AA^3)$ is smoothly equivalent to the vertex of a cone over the Grassmannian $\Gr(2,6) \hookrightarrow \mathbb{P}^{14}$ in its Pl\"ucker embedding.
\end{conjecture}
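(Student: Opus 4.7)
My plan is to combine Behrend's superpotential description of the Hilbert scheme with an analysis of the cubic obstruction form and a rigidity argument. By \cite{Behrend}, the completed local ring of $\Hilb^d(\AA^3)$ at $[S/I]$ has the form $\kk[[t_1,\ldots,t_N]]/J(W)$, where $N=\dim_\kk T(I) = 3d+6$, the superpotential $W\in\mathfrak{m}^3$ has vanishing Hessian (since the Hilbert scheme's tangent space at $[S/I]$ is the full ambient $\AA^N$), and $J(W)=(\partial W/\partial t_i)$. Smooth equivalence to the vertex of the cone $C(\Gr(2,6)) \subseteq \AA^{15}$ therefore reduces to exhibiting formal coordinates in which $W$ depends on only $15$ of the $t_i$ and coincides, up to a formal automorphism, with the Pfaffian of a generic skew $6\times 6$ matrix, whose Jacobian ideal is precisely the Pl\"ucker ideal of $\Gr(2,6) \subseteq \P^{14}$.

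The first step is to extract and identify the cubic part $W_3 \in \Sym^3 T(I)^*$ by analyzing the tangent cone of the smoothable component at $[S/I]$. Since the smoothable component has dimension $3d$ inside the $(3d+6)$-dimensional formal neighborhood, its tangent cone is a $3d$-dimensional cone in $T(I)\cong\AA^{3d+6}$, and the codimension $6$ gap should force this cone, after a linear change of coordinates, to split as $C(\Gr(2,6)) \times \AA^{3d-9}$; equivalently, $W_3$ lies in the $\GL(T(I))$-orbit of the Pfaffian cubic. For the prototypical case $I = \mathfrak{m}^2$ with $d = 4$ this is classical (\cite{Katz__4points,H23,Hauenstein_Manivel_Szendroi__Katz_bis}); for arbitrary $I$, I would construct a deformation from $[S/I]$ to the $\mathfrak{m}^2$-model within the locally closed stratum $\{\dim_\kk T = 3d+6\}$ of the Hilbert scheme, using the broken Gorenstein / licci classification machinery and the bicanonical module developed in the earlier sections to show that this stratum is irreducible and that the cubic obstruction form is locally constant along it.

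Once $W_3$ is identified with the Pfaffian cubic, I would invoke the rigidity of the affine cone $C(\Gr(2,6))$---which follows from rigidity of the Grassmannian $\Gr(2,6)$ as a homogeneous variety---to upgrade the identification to all of $W$. Since the critical scheme of $W$ is a formal deformation of $C(\Gr(2,6)) \times \AA^{3d-9}$ and the latter is rigid, the two must be isomorphic formally, yielding the smooth equivalence. The main obstacle will be the identification of $W_3$, or equivalently the computation of the tangent cone of the smoothable component. Without an explicit deformation to the $\mathfrak{m}^2$-model, one would need a direct classification of cubic forms in $15$ variables arising as obstruction forms for Hilbert schemes of threefolds with a $(3d+6)$-dimensional tangent space; this intrinsic classification is where the combinatorial and structural rigidity provided by the broken Gorenstein framework is expected to be decisive.
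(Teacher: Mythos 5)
This statement is Hu's Conjecture 4.31, and the paper does \emph{not} prove it: it remains open. The paper establishes it only for tripod ideals and for strongly stable ideals with $\dim_\kk T(I)=3d+6$ (\Cref{ref:surplusSixIsGrCone:thm}), by an entirely different and much more elementary route: one shows (\Cref{prop_linkage_Borel_3d+6}) that each such ideal lies in the linkage class of $\mm^2$, one proves that linkage preserves the smooth equivalence class of the Hilbert--scheme singularity (\Cref{theorem_smoth_equiv}, via the nested Hilbert scheme and \Cref{linkage_in_families}), and one quotes Katz's computation that $[\mm^2]\in\Hilb^4(\AA^3)$ is a cone over $\Gr(2,6)$. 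So your text should be read as a proposed programme for the general conjecture, and as such it has genuine gaps at its decisive steps.

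The first and most serious gap is the identification of the cubic part $W_3$ of the superpotential with the Pfaffian cubic. You propose to deduce this by deforming $[S/I]$ to the $\mm^2$-model inside the stratum $\{\dim_\kk T=3d+6\}$ and arguing that this stratum is irreducible with locally constant cubic obstruction form. Neither claim is established, and the paper's \Cref{ex:monomialSurplusSixNOTlinkedToKatz} is a concrete warning: there is a monomial ideal with $d=14$ and $\dim_\kk T(I)=48=3d+6$ that is \emph{not} in the linkage class of $\mm^2$, so the one mechanism the paper has for transporting the singularity type (linkage) provably cannot reach all points of the stratum. Moreover "locally constant" is delicate even granting irreducibility: the $\GL_{15}$-orbit of the Pfaffian cubic is not closed, so the cubic form of a special point of the stratum could a priori be a degeneration of the Pfaffian cubic rather than the Pfaffian cubic itself; semicontinuity gives you the wrong direction. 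The assertion that "the codimension $6$ gap should force" $W_3$ into the Pfaffian orbit is exactly the content one needs to prove, and no classification of cubics in $15$ variables with this property is supplied. The broken Gorenstein/licci machinery of the paper is of no help here, since by \Cref{main_conjecture} the singular points in question are precisely those \emph{without} such structures.

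The rigidity step also has a gap. Even if $W_3$ were identified with the Pfaffian cubic, rigidity of the affine cone $C(\Gr(2,6))$ does not by itself upgrade $W_3$ to $W$: the critical scheme of $W=W_3+W_4+\cdots$ is not in any evident way a \emph{flat} deformation of the critical scheme of $W_3$, so Schlessinger-type rigidity of the latter cannot be invoked directly. What one would actually need is a finite-determinacy statement for the superpotential itself (that $W$ is right-equivalent to its cubic part under the given constraints on the critical locus), which is a different and unproven assertion. In short: your outline correctly isolates where the difficulty lies, but both the identification of $W_3$ and the passage from $W_3$ to $W$ are open, and the paper's own counterexample shows that the most natural way to carry out your first step fails.
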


\Cref{conj:hu1} is a strengthening of the gap predicted by the parity conjecture \cite{MNOP}. We prove this conjecture for all monomial ideals, without any restriction on $d$, see \Cref{ThmSingularMonomial}. 
One class of ideals $I$ for which  $\dim_{\kk} T(I) = 3d+6$ are the \textbf{tripod ideals}
(\Cref{example_tripod_1}).
We establish \Cref{conj:hu2} for all tripod ideals (\Cref{ref:surplusSixIsGrCone:thm}).

Hu also conjectured a classification of Borel-fixed ideals with tangent
    space dimension $3d + 6$ in characteristic $0$ (see \cite[Conjecture
    4.27-4.29]{H23}). 
We prove these conjectures, 
and we also verify \Cref{conj:hu2} for Borel-fixed ideals 
(\Cref{theorem_strongly_stable_3d+6}, \Cref{ref:surplusSixIsGrCone:thm}).

Our results on \Cref{conj:hu2} are  obtained using linkage;
however, we also show that
 linkage cannot yield a full proof of the conjecture    (\Cref{ex:monomialSurplusSixNOTlinkedToKatz}).
As a byproduct of our analysis, we obtain a new criterion  for determining if an ideal is not in the linkage class of any monomial ideal, 
thus providing  a partial answer to the question posed in~\cite{Huneke_Ulrich__Monomials}.
 In particular, this yields 
 a new criterion  for determining if an ideal is not in the linkage class of a complete intersection;
very few such criteria are known \cite{Huneke_Ulrich__Structure_of_linkage}.

\begin{prop}[\Cref{corollary_linkage_obstruction}]
    If $[S/I]\in \Hilb^d(\mathbb{A}^3)$ is a point for which $\dim_{\kk} T(I) \not\equiv d \bmod 2$, then $I$ is not in the  linkage class of
 any homogeneous ideal (and therefore not linked to any monomial
    ideal). In particular,  $I$  is not licci.
\end{prop}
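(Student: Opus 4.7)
The plan is to combine two facts. \emph{First}, every homogeneous ideal $J \subseteq S$ defining a point of $\Hilb^{d'}(\AA^3)$ satisfies $\dim_\kk T(J) \equiv d' \pmod 2$; this is the parity conjecture for $\mathbb{G}_m$-fixed points of $\Hilb^d(\AA^3)$, known by \cite{MNOP, Ramkumar_Sammartano_parity, Graffeo_Giovenzana_Lella}. \emph{Second}, direct linkage preserves the parity of $\dim_\kk T(I) - d$: if $I$ and $J$ are directly linked via a complete intersection of length $d + d'$, then $\dim_\kk T(I) - d \equiv \dim_\kk T(J) - d' \pmod 2$. Given these, if $I$ lies in the linkage class of a homogeneous ideal $J$, iterating the second fact along a linkage chain and applying the first yields $\dim_\kk T(I) - d \equiv 0 \pmod 2$; the contrapositive gives the statement. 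Monomial ideals are homogeneous, so $I$ is also not linked to any monomial ideal.

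For the \emph{in particular} claim about licci ideals, every complete intersection $K$ is a smooth point of $\Hilb^{\dim_\kk(S/K)}(\AA^3)$, so $\dim_\kk T(K) - \dim_\kk(S/K) = 2\dim_\kk(S/K)$ is even. Chaining the second fact along a linkage chain connecting $I$ to a complete intersection then forces $\dim_\kk T(I) \equiv d \pmod 2$, so its failure rules out $I$ being licci.

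The heart of the proof is the second fact. The plan is to consider the nested Hilbert scheme $H$ parametrising pairs $(K, I)$ with $K \subseteq I$, $K$ a complete intersection with $\dim_\kk(S/K) = e = d + d'$, and $\dim_\kk(S/I) = d$. At $(K, I) \in H$, the tangent space is the kernel of the compatibility map
\[
\Hom_S(K, S/K) \oplus \Hom_S(I, S/I) \longrightarrow \Hom_S(K, S/I), \qquad (\psi, \phi) \mapsto \overline{\psi} - \phi|_K,
\]
where $\overline{\psi}$ is the composition of $\psi$ with the projection $S/K \twoheadrightarrow S/I$. Since $K$ annihilates $S/K$, the Koszul relations become trivial and $\Hom_S(K, S/K) \cong (S/K)^3$, and likewise $\Hom_S(K, S/I) \cong (S/I)^3$; the compatibility map is coordinate-wise surjective, so
\[
\dim_\kk T_{(K, I)} H \;=\; 3e + \dim_\kk T(I) - 3d \;=\; 3d' + \dim_\kk T(I).
\]
The linkage involution $(K, I) \mapsto (K, K\colon I)$ identifies $H$ with the analogous nested Hilbert scheme for pairs $(K, J)$ where $\dim_\kk(S/J) = d'$. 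By the symmetric calculation, the same tangent space equals $3d + \dim_\kk T(J)$. Equating yields $\dim_\kk T(I) - \dim_\kk T(J) = 3(d - d')$, hence $\dim_\kk T(I) - d \equiv \dim_\kk T(J) - d' \pmod 2$.

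The main obstacle is the rigorous verification that the linkage involution extends to an isomorphism of nested Hilbert schemes in families, beyond a set-theoretic bijection on closed points. This relies on the flatness of families of linked ideals and the Cohen-Macaulay nature of codimension-three linked ideals in a regular local ring, both available through the deformation theory of nested Hilbert schemes developed earlier in the paper.
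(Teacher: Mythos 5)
Your proposal is correct, and while it shares the paper's two main ingredients (the known parity $\dim_\kk T(J)\equiv \dim_\kk(S/J) \bmod 2$ for homogeneous $J$, and the fact that the linkage involution $(K,I)\mapsto (K,K\!:\!I)$ is an isomorphism of nested Hilbert schemes in families, which is the paper's \Cref{linkage_in_families} and is indeed the one step you correctly flag as requiring a genuine scheme-theoretic argument), you derive the invariance of the tangent excess under direct linkage by a different and arguably cleaner mechanism. The paper deduces it (\Cref{theorem_excess_dim}) from the smoothness of the projection $\nestedlcilocus{d}{d'}\to\Hilb^d(X)$ together with the identification of the preimage of the smoothable component, which is why that theorem carries a smoothability hypothesis on both linked schemes. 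You instead compute $\dim_\kk T_{(K,I)}H = 3d' + \dim_\kk T(I)$ directly from the fiber-product description of the tangent space to the nested Hilbert scheme, using that $K$ is a complete intersection so that $\Hom_S(K,-)\simeq(-)^{\oplus 3}$ on $S/K$-modules and the compatibility map is surjective; equating with the symmetric count for $(K,K\!:\!I)$ gives the exact equality $\dim_\kk T(I)-3d=\dim_\kk T(J)-3d'$ with no smoothability assumption. This is a real gain: the paper's route, read literally, needs the smoothability of the ideals in the chain (automatic in the licci case, but not for a general homogeneous ideal), whereas yours does not. Your separate treatment of the licci case via $\dim_\kk T(K)=3\dim_\kk(S/K)$ for a complete intersection is also correct and necessary, since a complete intersection need not be homogeneous.
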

    By  work of Giovenzana-Giovenzana-Graffeo-Lella~\cite{Graffeo_Giovenzana_Lella,
    Graffeo_Giovenzana_Lella_Two}, concrete examples of ideals satisfying the hypothesis of the above theorem are known, see  \Cref{example_GGGL}.

\subsection{Further directions}
Our work opens up several avenues for further exploration, and we  highlight a few of them.

\subsubsection{The shape of the staircase:} Singularizing triples capture the idea expressed in~\cite{Rezaee__Conjectural_most_singular, H23} that the ``shape'' of the staircase $E_I$ reflects the geometry of the monomial point $[S/I]\in \Hilb^d(\mathbb{A}^3)$. The combinatorics of singularizing triples is intriguing in its own right: for instance, we believe that the singular monomial ideals with tangent space dimension $3d+6$ should also be classifiable and that all their singularizing triples share a common pair of socle elements.

\subsubsection{Generating functions} \label{subsecGenFunc}
Let $P_3(d)$ denote the number of monomial points of $\Hilb^d(\AA^3)$,
equivalently, the number of plane  partitions of $d$. MacMahon
gave the famous formula
\[
    \sum_{d=0}^\infty P_3(d)q^d = \prod_{i\geq 1} \frac{1}{(1-q^i)^{i}}.
\]
The singularities of monomial points, for example the tangent
space dimension, yield a refinement of this formula. A plausible first step to obtain it, would be to
take $P^{\sm}_3(d)$ to be the number of \emph{smooth} monomial ideals in
$\Hilb^d(\AA^3)$ and try to determine a closed formula for the generating
series $\sum_{d\geq 0}P^{\sm}_3(d)q^d$.
By \Cref{RemarkProduceSmooth},
we can generate all smooth monomial ideals up to a given $d$  directly (that is, without computing all monomials ideals). 
The first fourteen  values of $P^{\sm}_3(d)$ are as follows
$$
1, 3, 6, 12, 21, 36, 58, 91, 138, 204, 300, 417, 597, 816. 
$$
As far as we know, this sequence does not appear in the combinatorial literature.

\subsubsection{Structure of licci ideals}
Recent work by Weyman, in collaboration with Guerrieri, Ni, and others~\cite{Weyman__Three_takes, Weyman__Mapping, GNW__Higher_structure, NW__bigradings, GNW__ADE}, provides a theory of higher structure maps for licci ideals in codimension three.
It would be very interesting to understand the relationship between this theory and our work.

\section*{Acknowledgements}
The authors would like to thank Craig Huneke, Xianglong Ni, Claudia Polini, Bernd Ulrich, and Jerzy Weyman for several helpful conversations. 
We are very grateful to the anonymous referee for carefully reading our paper and for providing many valuable suggestions.
We  would like to thank University of Warsaw for hospitality in April 2024, where this project was finally shaped.\footnote{We would also like to give special thanks to Cofix for keeping us caffeinated.}

\section*{Funding}
Joachim Jelisiejew was supported by National Science Centre grant 2023/50/E/ST1/00336. 
Ritvik Ramkumar was partially supported by NSF grant DMS-2401462. 
Alessio Sammartano  was  supported by the grants PRIN 2020355B8Y,
PRIN 2022K48YYP,
and INdAM – GNSAGA  CUP E55F22000270001.

\section{Preliminaries} \label{sec:prelim} 

    Throughout the paper, we work over a field $\kk$.

    \subsection{Zero-dimensional algebras}\label{ssec:zerodimensional}
Throughout this subsection, let $R$ be a finite $\kk$-algebra. In particular, $R$ is automatically Cohen-Macaulay.
Let $M$ be a finite $R$-module.
Its \textbf{degree} is  $\deg M := \dim_{\kk} M$, and
its \textbf{dual module} is $M^{\vee} :=
\Hom_{\kk}(M, \kk)$, with the $R$-module structure given by $(r\cdot f)(m) :=
 f(rm)$ for $r\in R$, $m\in M$, and $f\in M^{\vee}$.

The canonical, or dualizing, module for $R$ is $\omega_R := R^{\vee}$.
By~\cite[Proposition~21.1]{EisView}, there is a unique (up to unique isomorphism) dualizing functor on the category of finitely generated $R$-modules. 
This implies that    the functors $\Hom_R(-, \omega_R)$ and $(-)^{\vee}$ are isomorphic. 
If $R$ is Gorenstein, then $\omega_R  \simeq R$, and thus $\Hom_R(-, R)$ and  $(-)^{\vee}$ are isomorphic.

Suppose that $R$ is a quotient of a polynomial ring $S$,    
and let $M$ be a finite $R$-module. 
By~\cite[Corollary~3.3.9]{BrunsHerzog}, we have
 $M^{\vee}  \simeq \Ext^{\dim S}_S(M, S)$. 
 Moreover, a free    resolution $F_{\bullet}\to M$ of length $\dim S$ yields a free resolution
    $F_{\bullet}^{*}$ of $M^{\vee}$, where $F_i^{*}$ denotes the free $S$-module
    $\Hom_S(F_i, S)$. This observation yields the following.

    \begin{lemma}\label{ref:transposition:lem}
        Let $M$, $N$ be finite degree $S$-modules. The $S$-modules
        $\Ext^i_S(M, N)$ and $\Ext^i_S(N^{\vee}, M^{\vee})$ are isomorphic.
        Morever, $\sum_{i=0}^{\dim S} (-1)^i \dim_\kk \Ext_S^i(M,N)=0$.
    \end{lemma}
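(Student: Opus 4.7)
The plan is to realize both $\Ext$-modules as the cohomology of essentially the same bicomplex. I would begin by choosing finite free $S$-resolutions $F_\bullet \to M$ and $G_\bullet \to N$ of length $n := \dim S$. By the paragraph preceding the lemma, the $S$-duals $F^*_\bullet$ and $G^*_\bullet$, once reversed in direction so that $F^*_n$ and $G^*_n$ sit in homological degree $0$, are free resolutions of $M^\vee$ and $N^\vee$ respectively.

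Next, I would form the bicomplex $\Hom_S(F_\bullet, G_\bullet)$, whose total complex has cohomology $\Ext^i_S(M, N)$. The key observation is that for finitely generated free $S$-modules $F, G$ one has a natural isomorphism
\[
\Hom_S(F, G) \;\cong\; F^* \otimes_S G \;\cong\; \Hom_S(G^*, F^*),
\]
induced by the canonical identification $(F^*)^* \cong F$. Applying this termwise, and relabeling $(i, j) \mapsto (n-j, n-i)$, identifies $\Hom_S(F_\bullet, G_\bullet)$ with the bicomplex $\Hom_S(G^*_\bullet, F^*_\bullet)$; crucially, the shifts by $n$ needed to view $G^*_\bullet$ and $F^*_\bullet$ as resolutions of $N^\vee$ and $M^\vee$ cancel in the two directions, so total degrees are preserved. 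The latter bicomplex computes $\Ext^i_S(N^\vee, M^\vee)$, yielding the desired isomorphism.

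The only delicate point is verifying that the identification $\Hom_S(F_i, G_j) \cong \Hom_S(G^*_j, F^*_i)$ intertwines the two bicomplex differentials after reindexing; this is a routine bookkeeping exercise that reduces to naturality of the transposition map with respect to morphisms of free modules in each variable. A more conceptual alternative would be to invoke Grothendieck--Matlis duality for the Gorenstein ring $S$: the functor $R\Hom_S(-, S)[n]$ is a contravariant self-equivalence of the derived category of complexes with finite-length cohomology and sends $M$, $N$ to $M^\vee$, $N^\vee$ by the preceding paragraph, and any contravariant equivalence exchanges the arguments of $\Ext$.
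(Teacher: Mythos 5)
Your proposal is correct and follows essentially the same route as the paper: the paper describes elements of $\Ext^i_S(M,N)$ as chain maps $F_\bullet[-i]\to G_\bullet$ up to homotopy and transposes them to chain maps $G^*_\bullet[-i]\to F^*_\bullet$, which is exactly your termwise dualization of the bicomplex $\Hom_S(F_\bullet,G_\bullet)$, relying on the same preliminary fact that $F^*_\bullet$ and $G^*_\bullet$ resolve $M^\vee$ and $N^\vee$. Your degree bookkeeping (the two shifts by $\dim S$ cancelling) is the explicit version of what the paper leaves implicit, so the arguments match.
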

    \begin{proof}
Let $F_{\bullet}$, $G_{\bullet}$ be  free resolutions of length $\dim S$ of $M$, $N$, respectively. 
Elements of $\Ext^i_S(M, N)$ are chain complex maps $f\colon F_{\bullet}[-i]\to G_{\bullet}$ up to null homotopy.
Such a map can be transposed to yield $f^\intercal[-i]\colon G^{\vee}_{\bullet}[-i] \to F^{\vee}_{\bullet}$. The transpose is an involution and
        null-homotopic maps are sent (surjectively) to null-homotopic ones, so
        $f^\intercal[-i]$ yields a well defined element of $\Ext^i_S(N^{\vee},
        M^{\vee})$. The map $f\mapsto f^\intercal[-i]$ is the required isomorphism.
        
Since $M$ has finite degree, its resolution satisfies $\sum (-1)^i \mathrm{rk} (F_i) = 0$.
Since $N$ has finite degree, for any finite free $S$-module $F$ we have $\dim_\kk \Hom(F,N) = \mathrm{rk}(F) \dim_\kk N$.
We conclude that
\begin{align*}
\sum_{i=0}^{\dim S}(-1)^i \dim_\kk \Ext_S^i(M,N) &
=  \sum_{i=0}^{\dim S}(-1)^i \dim_\kk H^i( \Hom (F_\bullet, N))
=  \sum_{i=0}^{\dim S}(-1)^i \dim_\kk  \Hom (F_i, N)\\
& = 
\sum_{i=0}^{\dim S} (-1)^i \mathrm{rk}(F_i) (\dim_\kk N)
				= \dim_\kk N \left(\sum_{i=0}^{\dim S}(-1)^i \mathrm{rk}(F_i)\right) = 0. \qedhere
\end{align*}
        \end{proof}
    
We will also need the following observations about the Tor functor. 
Recall that, for cyclic $S$-modules $S/I$, $S/J$, we have $\Tor_1^S(S/I,
 S/J) \simeq \frac{I\cap J}{IJ}$. 
 In the present paper, the Tor functor appears in connection to the Ext functor via the following lemma.

\begin{lemma}\label{ref:GorensteinTor:lem}
Let $R$ be a finite quotient of a polynomial ring $S$ and let $M$ be a finitely generated $S$-module. 
Then, $\Ext_S^i(M, \omega_R) \simeq (\Tor_i^S(M, R))^{\vee}$, where $(-)^{\vee}$ is applicable since $\Tor_i^S(M, R)$ is an $R$-module.
    \end{lemma}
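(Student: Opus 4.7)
The plan is to deduce the claim from the standard tensor-hom adjunction, using that $\omega_R = R^{\vee} = \Hom_{\kk}(R,\kk)$ by definition. Since $\kk$ is a field, $\Hom_{\kk}(-,\kk)$ is an exact functor, which will let me commute it past cohomology for free. The whole argument is an adjunction together with careful bookkeeping of module structures.

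First I would establish the key functorial identity. For any $S$-module $N$, the adjunction between $-\otimes_S R$ and restriction along $S\to R$, composed with the tensor-hom adjunction over $\kk$, gives a natural isomorphism
\begin{equation*}
    \Hom_S\bigl(N,\Hom_{\kk}(R,\kk)\bigr) \;\simeq\; \Hom_{\kk}(N\otimes_S R,\kk),
\end{equation*}
i.e.\ $\Hom_S(-,\omega_R)\simeq (-\otimes_S R)^{\vee}$ as functors from $S$-modules to $\kk$-vector spaces. In fact both sides are naturally $R$-modules (via the $R$-action on $R$ and on $N\otimes_S R$ respectively), and the isomorphism is $R$-linear, which is what will give the claimed $R$-module structure on $\Ext_S^i(M,\omega_R)$.

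Next I would take a projective resolution $F_{\bullet}\to M$ over $S$. Applying the functor isomorphism above termwise yields
\begin{equation*}
    \Hom_S(F_{\bullet},\omega_R) \;\simeq\; \Hom_{\kk}(F_{\bullet}\otimes_S R,\kk).
\end{equation*}
Because $\Hom_{\kk}(-,\kk)$ is exact, it commutes with taking cohomology of a complex; explicitly, $H^i\Hom_{\kk}(C^{\bullet},\kk)\simeq \Hom_{\kk}(H_i C^{\bullet},\kk)$. Taking $i$-th cohomology on both sides therefore gives
\begin{equation*}
    \Ext_S^i(M,\omega_R) \;\simeq\; \Hom_{\kk}\bigl(H_i(F_{\bullet}\otimes_S R),\kk\bigr) \;=\; \bigl(\Tor_i^S(M,R)\bigr)^{\vee},
\end{equation*}
as desired, with the parenthetical remark being valid because $\Tor_i^S(M,R)$ inherits an $R$-module structure from the second factor, so its $\kk$-dual is again an $R$-module in the sense of \S\ref{ssec:zerodimensional}.

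The main (and only) subtle point will be to verify that the adjunction is genuinely $R$-linear and natural in $N$, so that passing to the resolution $F_{\bullet}$ produces an isomorphism of complexes of $R$-modules, not just of $\kk$-vector spaces. Once this is checked, the rest is formal from exactness of $\kk$-duality. No hypothesis on $M$ beyond finite generation is used, and the finiteness of $R$ as a $\kk$-algebra enters only to identify $\omega_R$ with $R^{\vee}$.
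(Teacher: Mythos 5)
Your proof is correct and follows essentially the same route as the paper: both identify $\Hom_S(F_\bullet,\omega_R)$ with the $\kk$-dual of $F_\bullet\otimes_S R$ via adjunction (the paper phrases this as applying $\Hom_R(-,\omega_R)\simeq(-)^{\vee}$ to $F_\bullet\otimes_S R$), and then commute the exact duality functor past homology. The only cosmetic difference is that you work with $\Hom_{\kk}(-,\kk)$ directly where the paper invokes exactness of $\Hom_R(-,\omega_R)$; these agree on $R$-modules.
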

    \begin{proof}
Let $F_{\bullet}$ be a free resolution of $M$ and consider the complex $F_{\bullet}\otimes_S R$. 
The groups $\Ext_S^i(M, \omega_R)$ arise by applying $\Hom_R(-, \omega_R)$ to this  complex and taking homology,
  while $(\Tor_i^S(M, R))^{\vee}$ arise by first taking homology and then applying $\Hom_R(-, \omega_R) \simeq (-)^{\vee}$. Since the functor $\Hom_R(-, \omega_R)$ is exact~\cite[Proposition~21.2]{EisView}, it commutes with taking homology, giving the desired result.
\end{proof}

\subsection{Tangent spaces and abstract deformation functors}  \label{ssec:defFunct}
We will briefly review the theory of abstract deformation functors. For more details, we refer to~\cite[\S7]{hartshorne_deformation_theory}, which operates under additional assumptions, and to the general theory in~\cite{fantechi_et_al_fundamental_ag, Sernesi__Deformations} for a broader framework. 


        \newcommand{\cM}{\mathcal{M}}
        \newcommand{\cN}{\mathcal{N}}
Let $S$ be a fixed Noetherian $\kk$-algebra and $M$ be a finitely generated $S$-module.
Let $\mathbf{Art}$ denote the category of local finite $\kk$-algebras $(A,\mm)$ with residue field $\kk$.
The functor $\Def_M\colon \mathbf{Art}\to \mathbf{Set}$ associates to a local finite $\kk$-algebra $(A,\mm)$ the set
 $\Def_M(A) = \left\{ (\cM, \iota) \right\}/\mathrm{iso}$, 
where $\cM$ is a finitely generated $(S\otimes_{\kk} A)$-module, flat over $A$,
 and $\iota$ is an isomorphism $\iota\colon \cM/\mm \cM \simeq M$.
        The functor $\Def_M$ admits a tangent space
        isomorphic to
        $\Ext^1_S(M, M)$ and a (complete) obstruction theory with obstruction
        group $\Ext^2_S(M, M)$, see \cite[15]{hartshorne_deformation_theory}, \cite[VI.1.3]{Eisenbud_Harris__Geometry_of_schemes}  and 
\cite[Example 6.3.7(2)]{fantechi_et_al_fundamental_ag}.
In the special case when $S = \kk[x_1, \ldots, x_n]$ and $S/I$ a finite quotient algebra,  the tangent space to the abstract deformation functor $\Def_{S/I}$ coincides with the tangent space to $[S/I] \in \Hilb(\mathbb{A}^n)$ and is given by
\[
T(I) := T\Def_{S/I} = \Ext^1_S(S/I, S/I) \simeq \Hom_S(I, S/I).
\]

There is  a flag (or nested) analogue of the abstract deformation functor (see~\cite[\S4.5]{Sernesi__Deformations}), which we will use only in \Cref{ref:smoothnessOfSmoothableBroken:cor}.
Given a 
        surjection $M\onto N$, the functor $\Def_{M \onto N} \colon \mathbf{Art}\to \mathbf{Set}$ associates to a local finite $\kk$-algebra $(A,\mm)$ the set $\{(\cM \onto \mathcal{N}, \iota_\cM, \iota_\cN)\}/\text{iso}$ where 
        $\iota_M  \colon \cM/\mm \cM \simeq M$ and $\iota_\cN \colon\cN/\mm \cN \simeq N$ are isomorphisms
        such that
        
\begin{equation}\label{DefMN_diagram}
\begin{tikzcd}
\mathcal{M} \arrow[two heads]{r} \arrow{d}& 
\mathcal{N} \arrow{d} \\
\cM/\mm \cM \overset{\iota_\cM}{\longrightarrow} M \arrow[two heads]{r} & N \overset{\iota_\cN}{\longrightarrow} \cN/\mm \cN
\end{tikzcd}
\end{equation}
commutes.
The tangent space to $\Def_{M\onto N}$ is
            \[
                \begin{tikzpicture}[commutative diagrams/every diagram]
                    \matrix[matrix of math nodes, name=m, commutative
                    diagrams/every cell]{
                        T\Def_{M\onto N} &     \Ext^1_S(M, M)\\
                        \Ext^1_S(N, N) & \Ext^1_S(M, N)\\
                    };
                    \path[commutative diagrams/.cd, every arrow, every label]
                    (m-1-1) edge node {$\pi_{M}$} (m-1-2)
                    (m-1-1) edge [left] node {$\pi_{N}$} (m-2-1)
                    (m-2-1) edge (m-2-2)
                    (m-1-2) edge (m-2-2);
                    \begin{scope}[shift=($(m-1-1)!.15!(m-2-2)$)]
                        \draw[line width=0.3mm] +(-0.3, -0.3) -- + (0, -0.3) --
                        +(0,0);
                    \end{scope}
                \end{tikzpicture}
            \]
            where $\pi_M$ and $\pi_N$ are the tangent maps for
            $\Def_{M\onto N}\to \Def_M$ and  $\Def_{M\onto N}\to \Def_N$, respectively,
            see \cite[Proposition 2.1]{MotiveNestedQuot}.
            Assuming that $\Ext^1_S(M, N)$ is the pushout in the above
            diagram,
             the flag
            deformation functor admits an obstruction theory, see~\cite[Appendix]{jelisiejew_Mandziuk}.

            In analogy with $\Def_{M\onto N}$, given an inclusion $K \subseteq M$ we can also define the functor $\Def_{K\subseteq M}$. More precisely, the functor $\Def_{K\subseteq M} \colon \mathbf{Art}\to \mathbf{Set}$ associates to an Artinian $\kk$-algebra $A$ the set of (isomorphism
            classes of) triples $(\cK \subseteq \cM, \iota_\cK, \iota_\cM)$ that satisfy a diagram analogous to \eqref{DefMN_diagram}. 
By the local criterion for flatness, the surjection $\cM\to \cM/\cK$ yields an element of
$\Def_{M\onto M/K}$ and we obtain $\Def_{K\subseteq M} \simeq \Def_{M\onto M/K}$.

\subsection{Linkage}\label{ssec:linkage}

        Linkage is a useful equivalence relation on Cohen-Macaulay
        \emph{ideals}, see~\cite{Peskine_Szpiro__Liaison, huneke_ulrich_cis,
        Huneke_Ulrich__Structure_of_linkage}. 
We will be interested in unmixed ideals of codimension three  in a regular ambient ring $S$. 
The ring $S$ is either local or standard graded, in the latter case all ideals considered are graded. 
Except for \Cref{sec:licci}, the considered ideals cut out zero-dimensional
schemes.
We denote by $\ua$, $\ub$  regular sequences  in $S$. 

 \begin{lemma}[
         {\cite{Peskine_Szpiro__Liaison}, \cite[Proposition 2.5]{Huneke_Ulrich__Structure_of_linkage}}]
 \label{ref:mainlinkage:lem}
Let $I\subseteq S$ be an unmixed ideal of codimension three, and $\ua \subseteq I$ a regular sequence of length three.
The ideal $(\ua : I) = \left\{ s\in S\ |\ sI \subseteq (\ua) \right\}$ is also unmixed of codimension three, and we have
 $I = (\ua : (\ua : I))$ and $\omega_{S/I} \simeq (\ua : I)/(\ua)$.
\end{lemma}

With notation as in Lemma \ref{ref:mainlinkage:lem}, 
the ideal $(\ua : I)$
is called the \textbf{link} of $I$ with respect to $\ua$.
Two
ideals $I$ and $J$ are \textbf{linked} if $J$ is the link of $I$ with respect
to some regular sequence. We say that $I$ and $J$ are in the same
\textbf{linkage class} if there is a chain of links from $I$ to $J$.
An ideal $I \subseteq S$ is said to be \textbf{licci} if it is in the linkage class of a complete intersection.

For a finitely generated Cohen-Macaulay $S$-module $M$ of codimension three, the \textbf{dual} $M^{\vee}$ is defined as $\Ext^3_S(M, S)$. 
 By~\cite[Proposition 3.3.3(b)(ii) and Corollary~3.3.9]{BrunsHerzog}, the operation $(-)^{\vee}$ is involutive, preserves being Cohen-Macaulay of codimension three, and, if $M$ is zero-dimensional, agrees with the definition in \Cref{ssec:zerodimensional} above.
If $I$ is an ideal such that $S/I$ is Cohen-Macaulay of codimension three, then the canonical module is $\omega_{S/I} = (S/I)^{\vee}$.

    \subsection{Monomial ideals}
We fix some of the notation and review the interpretation of tangent vectors to monomial ideals as bounded connected components, as developed in \cite{RS22}. 
The general linear group $\GL(n)$ acts on $S=\kk[x_1,\dots,x_n]$ by a change of coordinates, which induces an action on $\Hilb^{d}(\AA^n)$. 
We fix the maximal torus to be the subgroup of diagonal matrices and the Borel subroup to be the set of upper triangular matrices in $\GL(n)$. 
It is well known that an ideal $I$ is fixed by the maximal torus if and only if it is a monomial ideal.

\begin{definitions}
A \textbf{path} between  $\bfa, \bfb \in \Z^n$ is a sequence
$\bfa=\bfc^{0}, \bfc^{1},\ldots,\bfc^{m-1}, \bfc^{m}=\bfb$ of points of $\Z^n$ 
such that  $ \Vert \bfc^{i+1}-\bfc^{i} \Vert =1$ for all $i$,
where  $\Vert \bfd\Vert=\sum_{j=1}^n|\bfd_j|$.

A subset $U \subseteq \Z^n$ is said to be \textbf{connected} if it is non-empty and for any two points  $\bfa,\bfb \in U$ there is a path between them contained in $U$.
Given a subset $V \subseteq \Z^n$, a maximal connected subset $U\subseteq V$ is called a \textbf{connected component}.
A  subset $U\subseteq \Z^n$ is  \textbf{bounded} if it is finite.
\end{definitions}

Let $I \subseteq \N^n$ be (the set of exponent vectors of) a monomial ideal,
 and $\bfa \in \Z^n$.
A  connected component $U$ of $ (I+\bfa) \setminus \, I$ is  bounded if and only if $ U \subseteq \N^n$.

\begin{prop}[{\cite[Proposition 1.5]{RS22}}] \label{proposition_basis} Let $I$
    be a  cofinite monomial ideal  and $\bfa \in \Z^n$. 
    The number of bounded connected
    components of the set $(I+\bfa) \setminus \, I$ is equal to
    $\dim_{\kk}\Hom_S(I,S/I)_{\bfa}$, where $(-)_{\bfa}$ denotes the degree $\bfa$-component.
\end{prop}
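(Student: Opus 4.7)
The plan is to reduce $\Hom_S(I,S/I)_{\bfa}$ to a system of linear equations indexed by monomials of $I$, then to geometrize the resulting compatibility relations on the lattice $\Z^n$. Concretely, since $S/I$ is a $\Z^n$-graded monomial algebra, each graded piece $(S/I)_{\bfb}$ is zero unless $\bfb\in E_I$, in which case it is spanned by the class $[x^{\bfb}]$. A graded $S$-linear map $\varphi\colon I\to S/I$ of degree $\bfa$ is thus determined by scalars $c_m\in\kk$ indexed by the set $\tilde A:=\{m\in I:m+\bfa\in E_I\}$, via $\varphi(x^m)=c_m[x^{m+\bfa}]$ for $m\in\tilde A$ and $\varphi(x^m)=0$ otherwise.

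Next I would impose the relations $\varphi(x_i x^m)=x_i\varphi(x^m)$ for all $m\in I$ and $i$. A straightforward case analysis, using that $I$ is a monomial ideal, shows that only two kinds of nontrivial conditions arise: (I) $c_m=c_{x_i m}$ whenever both $m$ and $x_i m$ belong to $\tilde A$; and (II) $c_{x_i m}=0$ whenever $x_i m\in\tilde A$ but $m\notin\tilde A$. Case (II) can only occur with $m+\bfa\notin\N^n$, more precisely with $(m+\bfa)_i=-1$ and every other coordinate of $m+\bfa$ nonnegative; the alternative $m+\bfa\in I$ is ruled out because $I$ is closed under multiplication by $x_i$, which would force $x_i m+\bfa\in I$, contradicting $x_i m\in\tilde A$. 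Translating via the bijection $m\mapsto\bfc:=m+\bfa$ between $\tilde A$ and $B:=\bigl((I+\bfa)\setminus I\bigr)\cap\N^n$, which preserves lattice adjacency, condition (I) says that $c$ is constant on each connected component of $B$, and condition (II) says that $c|_W=0$ for a component $W$ of $B$ as soon as some vertex of $W$ admits a neighbor in $(I+\bfa)\setminus I$ lying outside $\N^n$.

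The main obstacle is to identify the components of $B$ on which $c$ is free: I claim they are exactly the bounded components of $(I+\bfa)\setminus I$. In one direction, a bounded component is contained in $\N^n$, hence coincides with a component of $B$ that has no neighbor in $(I+\bfa)\setminus I$ outside $\N^n$, so condition (II) is never triggered. Conversely, if a component $W$ of $B$ sits inside an unbounded component $V$ of $(I+\bfa)\setminus I$, then connectivity of $V$ in $\Z^n$ supplies a path from $W$ to some point of $V\setminus\N^n$; the first edge of this path that exits $\N^n$ witnesses exactly the configuration triggering condition (II), forcing $c|_W=0$. The surviving free parameters are therefore in bijection with bounded components, and the natural basis, indexed by a bounded component $U$, is given by $\varphi_U(x^m)=[x^{m+\bfa}]$ if $m+\bfa\in U$ and $0$ otherwise; linear independence is immediate from the indicator nature of these assignments, so the dimension count follows.
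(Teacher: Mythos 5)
Your argument is correct: the paper itself does not prove this statement but imports it from [RS22, Proposition~1.5], and your reduction of $\Hom_S(I,S/I)_{\bfa}$ to a function on $(I+\bfa)\setminus I$ that is constant on connected components and forced to vanish on any component meeting the complement of $\N^n$ is precisely the standard argument behind that result. The only external input you invoke --- that a connected component of $(I+\bfa)\setminus I$ is bounded if and only if it is contained in $\N^n$ --- is exactly the fact the paper records immediately before the proposition, so nothing is missing.
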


\subsection{Macaulay's inverse systems} \label{ssec:macInv}
Macaulay's inverse system, also known as apolarity, is a standard way to
construct zero-dimensional schemes. It is especially effective for Gorenstein
ones. Some references are~\cite{Macaulay_inverse_systems,
iarrobino_compressed_artin, EliasRossiShortGorenstein,
Jel_classifying}. We will use it only for examples, so we give only a brief
overview.

Let $S = \kk[x, y, z]$ and $P = \kk[X, Y, Z]$. 
We view $P$ as an
$S$-module via the \emph{contraction} action:
\[
    x\circ X^{\bfa_1}Y^{\bfa_2}Z^{\bfa_3} = \begin{cases}
        X^{\bfa_1 - 1}Y^{\bfa_2}Z^{\bfa_3} & \mbox{if } \bfa_1 > 0,\\
        0 & \mbox{otherwise},
    \end{cases}
\]
and similarly for $y$ and $z$ actions. 
If one views $P$ as a divided power
algebra, then $S$ acts by derivations~\cite[Appendix~A]{iakanev},
\cite[\S2.1]{Jel_classifying}. 
For every $f_1, \ldots, f_r\in P = \kk[X,Y,Z]$,
 we can consider the annihilator $\Ann(f_1, \ldots ,f_r) \subseteq S$ 
 of the submodule $S f_1 + \cdots Sf_r \subseteq P$.
For example, $\Ann(X^2 + YZ) = \left( x^2 - yz, xz, xy,
y^2, z^2\right)$.

\begin{thm}[Macaulay's theorem~\cite{Macaulay_inverse_systems}, formulated in codimension
    three]\label{ref:Macaulay:thm}
If $f_1, \ldots , f_r\in \kk[X,Y,Z],$ the quotient  $S/\Ann(f_1, \ldots ,f_r)$ is a finite local algebra. 
If $r = 1$, then it is Gorenstein.

Conversely, for every finite local  algebra $S/I$ there exist  $f_1, \ldots , f_r\in P$ such that $I = \Ann(f_1, \ldots, f_r)$. 
One can take $r = \degold(\soc(S/I))$. 
In particular,  if $S/I$ is Gorenstein, then there exists  $f\in P$ such that $I = \Ann(f)$.
\end{thm}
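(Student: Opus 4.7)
The plan is to derive both directions from the apolarity pairing
\[
\langle\,\cdot\,,\,\cdot\,\rangle\colon S\times P \to \kk, \qquad \langle g, h\rangle := (g\circ h)(0),
\]
the constant term of the contraction. This pairing is perfect on the dual monomial bases $\{x^{\alpha}\}$ and $\{X^{\beta}\}$ (i.e., $\langle x^{\alpha}, X^{\beta}\rangle = \delta_{\alpha,\beta}$) and satisfies the adjunction $\langle g'g, h\rangle = \langle g, g'\circ h\rangle$. A direct finite-dimensional computation then shows that $I \mapsto I^{\perp} := \{h\in P : I\circ h = 0\}$ is an order-reversing bijection between ideals $I\subseteq S$ containing some power of $\mathfrak{m}$ and finite-dimensional $S$-submodules $M\subseteq P$, with inverse $M\mapsto \Ann(M)$. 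The induced perfect pairing $(S/I)\times I^{\perp}\to \kk$ identifies $I^{\perp}$ with $(S/I)^{\vee} = \omega_{S/I}$ as $S/I$-modules.

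Granting this, the forward direction is almost immediate. Because each $f_i$ has finite degree, the submodule $Sf_i$ is contained in the polynomials of degree at most $\deg f_i$ and is therefore finite-dimensional over $\kk$; hence $M := Sf_1 + \cdots + Sf_r$ is finite-dimensional, and since $S/\Ann(f_1,\ldots,f_r)$ embeds into $\mathrm{End}_{\kk}(M)$, it is a finite $\kk$-algebra. For $N$ larger than each $\deg f_i$, every degree-$N$ monomial annihilates every $f_i$, so $\mathfrak{m}^N\subseteq \Ann(f_1,\ldots,f_r)$; this forces the support to be $\{\mathfrak{m}\}$, so the quotient is local. When $r=1$, the assignment $g\mapsto g\circ f$ realizes $R := S/\Ann(f)\simeq Sf = \Ann(f)^{\perp}\simeq \omega_R$, so $R$ is Gorenstein.

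For the converse, given $R = S/I$ finite local, set $r := \dim_{\kk}\soc(R)$. By the identification $I^{\perp}\simeq \omega_R$ of $R$-modules together with the Matlis-duality equality $(\omega_R/\mathfrak{m}\omega_R)^{\vee}\simeq \soc(R)$ and Nakayama's lemma, the minimal number of $R$-module (equivalently $S$-module) generators of $I^{\perp}$ is exactly $r$. Lifting any such generators to elements $f_1,\ldots,f_r\in P$ yields $I^{\perp} = Sf_1+\cdots+Sf_r$, and then $\Ann(f_1,\ldots,f_r) = (I^{\perp})^{\perp} = I$. The Gorenstein case corresponds to $r=1$.

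The main obstacle is setting up the Macaulay duality step rigorously: verifying that orthogonal complement is an order-reversing bijection between the relevant classes, that it intertwines the $S$-action, and that it identifies $I^{\perp}$ with $\omega_{S/I}$ as $S/I$-modules. Once this foundation is in place, both the direct and converse parts follow by essentially formal manipulations with annihilators, socles, and minimal generating sets.
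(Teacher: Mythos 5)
The paper offers no proof of this statement: it is quoted as Macaulay's classical theorem with a reference, and the surrounding text explicitly says only a brief overview of inverse systems is given. Your argument is the standard proof of that classical result via the perfect apolarity pairing $\langle g,h\rangle=(g\circ h)(0)$, and it is correct: the double-perpendicular identities $\Ann(M)^\perp=M$ and $\Ann(I^\perp)=I$ do follow from finite-dimensional linear algebra once one observes that $M\subseteq P_{\leq N}$ forces $\mathfrak m^{N+1}\subseteq\Ann(M)$ and, dually, $\mathfrak m^{N+1}\subseteq I$ forces $I^\perp\subseteq P_{\leq N}$; the identification $I^\perp\simeq\omega_{S/I}$ as $S/I$-modules is exactly the adjunction $\langle g'g,h\rangle=\langle g',g\circ h\rangle$; and the count $r=\dim_\kk\soc(S/I)$ of minimal generators of $I^\perp$ via Nakayama and $(\omega_R/\mathfrak m\omega_R)^\vee\simeq\soc(R)$ is right. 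The only caveat, which the paper's own phrasing shares, is that the converse as you prove it requires $I$ to be primary to $\mathfrak m=(x,y,z)$ (i.e., the scheme is supported at the origin); a finite local algebra supported elsewhere is not of the form $\Ann(f_1,\ldots,f_r)$ for polynomial $f_i$ under the contraction action, so "local" must be read with that understanding. Also note that your generators of $I^\perp$ already live in $P$, so no lifting step is actually needed there.
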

 \Cref{ref:Macaulay:thm} is particularly useful when 
the dimension of $\soc(S/I)$, that is, the \textbf{type} of $S/I$, is much smaller than the
number of generators of $I$.

\section{Broken Gorenstein structures}\label{sec:BGS}

In this section, we develop the theory of broken Gorenstein algebras.
 The main goal is to prove that if a smoothable algebra $R = \kk[x, y, z]/I$ admits a broken Gorenstein  structure, then the corresponding point $[R]\in \Hilb^d(\AA^3)$ is smooth.
We also introduce the bicanonical module and prove a structure theorem for broken Gorenstein algebras without flips.

We start by giving explicit descriptions of 1- and 2-broken Gorenstein algebras. In particular, a $1$-broken Gorenstein algebra, regardless of flips, is simply an extension
   \[
       0\to R_1 \to R\to R_0\to 0,
   \]
   with $R_0$, $R_1$ cyclic $R$-modules, corresponding to Gorenstein algebras. A
   $2$-broken Gorenstein algebra structure is a diagram
   \[
       \begin{tikzcd}
           R_1\ar[d, hook]\\
           \cK\ar[r, hook]\ar[d, two heads] & R \ar[r, two heads] & R_0\\
           R_2
       \end{tikzcd}
   \]
 where
$R_0$, $R_1$, $R_2$ are cyclic $R$-modules corresponding to Gorenstein algebras
and $\cK$
 is either cyclic (no flip) or cocyclic (flip).

    \subsection{The bicanonical module}\label{ssec:bicanonical}
As noted in the introduction, the bicanonical module plays a crucial role in the proof of \Cref{ref:canitbetrue:thm}, so we will introduce it now. In this section, we do not impose any ``codimension three" assumptions; instead, we define bicanonical modules in a broader context, as they are of general interest.

\begin{definition}
 Let $R$ be a finite $\kk$-algebra and $M$ an $R$-module. The \textbf{symmetric square} of $M$ is $\Sym_R^2 M$ and the \textbf{bicanonical module} for $R$ is defined to be $\Sym_R^2 \omega_R$.
\end{definition}

Recall that the symmetric square $\Sym_R^2 \omega_R$ is obtained from the ``usual" symmetric square $\Sym_{\kk}^2 \omega_R$ by imposing relations of the form $(r\varphi_1) \cdot \varphi_2 = \varphi_1 \cdot (r\varphi_2)$ for all $\varphi_1, \varphi_2 \in \omega_R$ and $r \in R$. The degree of $\Sym_{\kk}^2 \omega_R$ is always equal to $\binom{\degold(R)+1}{2}$.
By contrast, computing the degree of $\Sym_R^2 \omega_R$ is much more complex. We will see below that, under favorable conditions, this degree can equal $\degold(R)$.

The following result provides a way to bound the degree of the bicanonical module for an algebra that has a broken Gorenstein  structure.

    \begin{prop}\label{ref:induction:prop}
        Let $R$ be a finite $\kk$-algebra equipped with a short exact
        sequence of $R$-modules
        \[
            0\to \cK \to R\to R_0\to 0,
        \]
        where $R_0$ is a Gorenstein algebra.
        If the $R$-module $\cK$ is cyclic or cocyclic, then
        \[
            \degold (\Sym^2_{R} \omega_R) \leq \degold (\Sym^2_{R} (\cK^{\vee})) + \degold R_0.
        \]
    \end{prop}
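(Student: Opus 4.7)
The plan is to exploit duality in order to replace the Gorenstein hypothesis on $R_0$ and the cyclic/cocyclic hypothesis on $\cK$ by clean degree identities. First I would dualize the given sequence under the exact functor $(-)^\vee = \Hom_{\kk}(-,\kk)$ to obtain
\[
0 \to \omega_{R_0} \to \omega_R \to \cK^\vee \to 0,
\]
noting that $\omega_{R_0}\simeq R_0$ is cyclic over $R_0$ since $R_0$ is Gorenstein. Let $\omega_0 \in \omega_R$ be the image of a cyclic generator of $\omega_{R_0}$, so that the submodule $R_0 \subseteq \omega_R$ coincides with $\Ann_{\omega_R}(\cK)$ and is $R$-generated by $\omega_0$; in particular $\cK$ annihilates $\omega_0$ because the $R$-action on this submodule factors through $R_0$.

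The next step is to identify the kernel $\cI$ of the induced surjection $\Sym_R^2 \omega_R \onto \Sym_R^2 \cK^\vee$. By the standard right-exactness of $\Sym^2$ on a short exact sequence, $\cI$ is the image of $R_0 \otimes_R \omega_R \to \Sym_R^2 \omega_R$, which using the generator $\omega_0$ and the $\Sym^2$-relation $(r\omega_0)\cdot b = \omega_0 \cdot (rb)$ equals $\omega_0 \cdot \omega_R$. Because $\cK \cdot \omega_0 = 0$, the map $b \mapsto \omega_0 \cdot b$ factors through $\omega_R/\cK\omega_R \simeq R_0 \otimes_R \omega_R$, giving $\degold \cI \leq \degold(R_0 \otimes_R \omega_R)$. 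Combined with the additivity $\degold \Sym_R^2 \omega_R = \degold \cI + \degold \Sym_R^2 \cK^\vee$, the problem reduces to showing $\degold(R_0 \otimes_R \omega_R) \leq \degold R_0$.

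For the final bound I would invoke Hom-tensor adjunction together with the identification $\omega_R^\vee = R^{\vee\vee} \simeq R$ (valid since $R$ is finite over $\kk$) to obtain
\[
(R_0 \otimes_R \omega_R)^\vee \;\simeq\; \Hom_R(R_0, R) \;\simeq\; \Ann_R(\cK).
\]
The cyclic/cocyclic hypothesis then supplies the requisite degree identity. If $\cK = \alpha R$ is cyclic, then $\cK \simeq R/\Ann_R(\cK)$, giving $\degold \Ann_R(\cK) = \degold R - \degold \cK = \degold R_0$; if instead $\cK^\vee$ is cyclic, the same argument applied to $\cK^\vee$, using $\Ann_R(\cK^\vee) = \Ann_R(\cK)$ and $\degold \cK^\vee = \degold \cK$, yields the identical conclusion.

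The step I expect to be most delicate is the explicit identification of $\cI$ as $\omega_0 \cdot \omega_R$ and the verification that it factors through the $R_0$-module $\omega_R/\cK\omega_R$: this requires carefully tracking how the $R$-action on $R_0 \subseteq \omega_R$ factors through $R_0$ (so that $\cK\cdot\omega_0 = 0$), a point easy to lose sight of when juggling the ambient and quotient module structures. Once that identification and the duality $(R_0\otimes_R\omega_R)^\vee\simeq \Ann_R(\cK)$ are in hand, the cyclic/cocyclic hypothesis collapses the remaining inequality to a one-line length computation; in fact, all inequalities in the argument become equalities in natural cases, suggesting that the bound is essentially sharp.
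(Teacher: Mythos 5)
Your proposal is correct and follows essentially the same route as the paper: dualize the sequence, identify the kernel of $\Sym^2_R\omega_R\onto\Sym^2_R(\cK^\vee)$ as $\omega_0\cdot\omega_R$ with $\omega_0$ a cyclic generator of $\omega_{R_0}$, factor this through $\omega_R/\cK\omega_R$ since $\cK$ annihilates $\omega_{R_0}$, and then use duality plus the cyclic/cocyclic hypothesis to get $\degold(R/\Ann_R(\cK))=\degold\cK$. The only cosmetic difference is that you compute $(\omega_R/\cK\omega_R)^\vee\simeq\Ann_R(\cK)$ directly via Hom-tensor adjunction, whereas the paper dualizes $0\to\cK\omega_R\to\omega_R\to\omega_R/\cK\omega_R\to 0$ and shows $\degold(\cK\omega_R)=\degold\cK$; these are the same computation seen from the two ends of the sequence.
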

    \begin{proof}
        By assumption, we have an exact sequence of $R$-modules $0\to \cK \to
        R\to R_0\to 0$,
        which dualizes to an exact sequence
        \[
            0\to \omega_{R_0}\to \omega_{R}\to \cK^{\vee}\to 0
        \]
        of $R$-modules. 
        Applying $\Sym^2_R(-)$, we obtain an exact
        sequence \cite[Proposition 4, p. A III.69]{Bourbaki_Algebra_1_to_3}
        \[ 0 \to
            \omega_{R_0}\cdot \omega_{R}\to \Sym^2_{R} \omega_R \to
            \Sym^2_{R}(\cK^{\vee}) \to 0.
        \]
        Hence, $\degold (\Sym^2_R \omega_R) = \degold (\Sym^2_{R}(\cK^{\vee})) + \degold
        (\omega_{R_0} \cdot \omega_R)$. It remains to
        bound the second summand.

        Since $R_0$ is a zero-dimensional Gorenstein algebra, the module $\omega_{R_0}$ is cyclic and generated by some $g \in \omega_{R_0}$. Consequently, there is a surjective map
        $p\colon\omega_R\to \omega_{R_0}\cdot
        \omega_R\subseteq \Sym^2_R \omega_R$ which sends $\varphi$ to $g\cdot
        \varphi$. By definition, the module $\omega_{R_0} = \left( R/\cK
        \right)^{\vee}$ is annihilated by the ideal $\cK\subseteq R$. Thus, $\omega_{R_0}\cdot
        \omega_R$ is also annihilated by $\cK$ and the map $p$ factors to a
        surjective map
        \[
            \frac{\omega_R}{\cK  \omega_R}\onto \omega_{R_0}\cdot \omega_R.
        \]
        We will prove that
        \begin{equation}\label{eq:mainEquality}
            \degold (\cK \omega_R) = \degold \cK,
        \end{equation}
        which will show
        that $\degold (\omega_R/\cK\omega_R) = \degold R - \degold \cK = \degold R_0$ and
        thereby conclude the proof.

        Consider the sequence $0\to \cK \omega_R \to \omega_R \to \omega_R/(\cK
        \omega_R)\to 0$ and dualize it to obtain
        \[
            0\to J \to R\to \left( \cK\omega_R \right)^{\vee}\to 0,
        \]
        where  $J$ is the ideal   $(\omega_R/(\cK \omega_R))^\vee$. In particular, 
        \begin{align*}
            J 
             = \left\{ r\in R:  \varphi(r)=0 \text{ for all } \varphi\in \cK\omega_R \right\} 
            &= \left\{ r\in R:  \varphi(rr') = 0  \text{ for all } \varphi\in \omega_R \text{ and } r'\in \cK  \right\}  \\
            & = \left\{ r\in
           R:  rr' = 0 \text{ for all } r'\in \cK  \right\}.
        \end{align*}
        In particular, $J = \Ann_R(\cK)$ and it follows that
        $\degold((\cK\omega_R)^{\vee}) = \degold(R/\Ann_R(\cK))$.
        If $\cK$ is cyclic or
        cocyclic,
        then $R/\Ann_R(\cK)$ is isomorphic to $\cK$ or $\cK^{\vee}$,
        respectively. In both cases, we
        obtain \Cref{eq:mainEquality}, as desired.
    \end{proof}

    \begin{cor}\label{ref:brokenGorensteinCanonical:cor}
        Let $R$ be a finite $\kk$-algebra with a broken Gorenstein
        structure. Then $\degold \Sym^2_R \omega_R$ is at most $\degold (R)$.
        Moreover, $\degold(R)$ is  equal to $\degold \Sym^2_R
        R$.
    \end{cor}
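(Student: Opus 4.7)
The plan is to induct on $k$, the length of the broken Gorenstein structure on $R$. In the base case $k=0$, the algebra $R$ is Gorenstein, so $\omega_R \simeq R$. Since $R$ is a free $R$-module of rank one, $\Sym^2_R R \simeq R$, which gives the desired equality $\dim_\kk \Sym^2_R \omega_R = \dim_\kk R$. This base case already takes care of the ``moreover'' assertion about $\Sym^2_R R$, which holds for any $R$ whatsoever.

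For the inductive step, I would take the defining sequence $0 \to \cK \to R \to R_0 \to 0$ of the $k$-broken Gorenstein structure, and set $R' := R/\Ann_R(\cK)$, which by hypothesis carries a $(k-1)$-broken Gorenstein structure. Applying \Cref{ref:induction:prop} reduces the claim to the inequality
\[
\dim_\kk \Sym^2_R(\cK^\vee) \leq \dim_\kk \cK,
\]
since $\dim_\kk \cK + \dim_\kk R_0 = \dim_\kk R$. I would then split according to whether $\cK$ is cyclic or cocyclic. If $\cK$ is cocyclic, then $\cK^\vee$ is cyclic, and for any cyclic $R$-module $M$ one checks that $\Sym^2_R M \simeq M$ (the generator $m \otimes m$ of $M \otimes_R M \simeq M$ is already symmetric), so $\dim_\kk \Sym^2_R(\cK^\vee) = \dim_\kk \cK^\vee = \dim_\kk \cK$. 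If instead $\cK$ is cyclic, then $\cK \simeq R'$ and hence $\cK^\vee \simeq \omega_{R'}$; here the crucial observation is that the $R$-action on $\omega_{R'}$ factors through $R'$, so $\Sym^2_R \omega_{R'} = \Sym^2_{R'} \omega_{R'}$, and the inductive hypothesis applied to the $(k-1)$-broken Gorenstein algebra $R'$ yields $\dim_\kk \Sym^2_{R'} \omega_{R'} \leq \dim_\kk R' = \dim_\kk \cK$.

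There is no real obstacle here; the proof is largely bookkeeping. The two points one must verify are that $\Sym^2_R$ applied to a cyclic $R$-module recovers the module itself, and that $\Sym^2_R$ of an $R$-module on which $R$ acts through a quotient $R'$ agrees with $\Sym^2_{R'}$. Both follow directly from the defining relations of the symmetric square. Together with \Cref{ref:induction:prop}, these two facts make the two cases of the inductive step completely symmetric and allow the induction to close uniformly.
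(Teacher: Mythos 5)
Your proof is correct and follows the same route as the paper: the base case is the Gorenstein one, and the inductive step is exactly an application of \Cref{ref:induction:prop}, reduced to the bound $\dim_\kk\Sym^2_R(\cK^\vee)\le\dim_\kk\cK$ via the two cases (cyclic, where the induction hypothesis on $R/\Ann(\cK)$ is invoked, and cocyclic, where $\Sym^2$ of a cyclic module is the module itself). The paper states this induction in one line; you have simply written out the details it leaves implicit.
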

    \begin{proof}
If $R$ is Gorenstein, we have $\Sym_R^2 \omega_R \simeq \Sym_R^2 R \simeq R$. Thus, the claim holds for $0$-broken Gorenstein algebras. 
For $k \geq 1$, we proceed by induction using \Cref{ref:induction:prop} and the fact that $\cK^{\vee} \cong R/\Ann(\cK)$ or $\cK^{\vee} \cong \omega_{R/\Ann(\cK)}$. 
The assertion that $\Sym_R^2 R \simeq R$ is immediate and is included here for  reference.
    \end{proof}

\begin{definition}
A homomorphism $\varphi\colon \omega_R \to R$ is said to be  \textbf{symmetric}
    if $\varphi^{\intercal}\colon \omega_R = R^{\vee}\to \omega_R^{\vee}= R$ is equal to $\varphi$. 
    We denote by $\Hom_{\kk}^{\sym}(\omega_R, R)$ (respectively, by
     $\Hom_R^{\sym}(\omega_R,R)$),
the $\kk$-subspace of  $\Hom_{\kk}(\omega_R, R)$
(respectively, the $R$-submodule of 
     $\Hom_R(\omega_R,R)$) consisting of symmetric homomorphisms.
    \end{definition}

The bicanonical module of a finite $\kk$-algebra $R$  admits an interpretation in terms of maps. 
Assuming $\mathrm{char}(\Bbbk) \ne 2$,
since $\Sym^2_R \omega_R$ is an image of $\Sym^2_{\kk} \omega_R$, 
its dual $(\Sym^2_R \omega_R)^{\vee} =
    \Hom_{R}(\Sym^2_R \omega_R, \omega_R)$ is a subspace of $\Sym^2_{\kk}
    \omega_R^{\vee} = \Sym^2_{\kk} R \simeq \Hom_{\kk}^{\sym}(\omega_R, R)$.

        \begin{lemma}\label{ref:compatibility:lem}
            Let $R$ be a finite $\kk$-algebra, and assume that $\kk$ has characteristic different from 2.
            Under the identification $\Sym^2_{\kk} \omega_R^{\vee}  \simeq
            \Hom_{\kk}^{\sym}(\omega_R, R)$, the module $(\Sym^2_R \omega_R)^{\vee}$
            is isomorphic to $\Hom_R^{\sym}(\omega_R, R)$.
        \end{lemma}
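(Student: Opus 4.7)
The plan is to dualize the construction of $\Sym^2_R \omega_R$ as a quotient of $\Sym^2_{\kk} \omega_R$ and translate the defining relations into $R$-linearity on the dual side.

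First, I would spell out the identification $\Sym^2_{\kk} R \simeq \Hom_{\kk}^{\sym}(\omega_R, R)$ in terms of bilinear forms. Since $\omega_R = \Hom_\kk(R,\kk)$ and $R = \omega_R^{\vee}$ via the canonical pairing $\omega_R \times R \to \kk$, $(f,r)\mapsto f(r)$, a $\kk$-linear map $\varphi\colon \omega_R \to R$ corresponds to the bilinear form $B_{\varphi}\colon \omega_R \times \omega_R \to \kk$ given by $B_{\varphi}(f,g) = g(\varphi(f))$. The map $\varphi$ is symmetric in the sense defined in the paper precisely when $B_{\varphi}$ is a symmetric bilinear form, so (using $\chr \kk \neq 2$) the subspace $\Hom^{\sym}_{\kk}(\omega_R, R)$ gets identified with $(\Sym^2_\kk \omega_R)^{\vee}$.

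Next, the $R$-module $\Sym^2_R \omega_R$ is obtained from $\Sym^2_\kk \omega_R$ by imposing the relations $(r\varphi_1)\cdot \varphi_2 - \varphi_1\cdot (r\varphi_2) = 0$ for $r\in R$ and $\varphi_1,\varphi_2 \in \omega_R$. Dualizing the surjection $\Sym^2_\kk \omega_R \onto \Sym^2_R \omega_R$ yields the inclusion
\[
    (\Sym^2_R \omega_R)^{\vee} \hookrightarrow (\Sym^2_\kk \omega_R)^{\vee} \simeq \Hom^{\sym}_\kk(\omega_R, R),
\]
whose image consists of those symmetric $\varphi$ whose associated form $B_\varphi$ annihilates the relations $(rf)\cdot g - f\cdot(rg)$.

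The final step is an easy unpacking: the vanishing condition $B_{\varphi}(rf, g) = B_{\varphi}(f, rg)$ reads $g(\varphi(rf)) = (rg)(\varphi(f)) = g(r\varphi(f))$ for all $g\in \omega_R$, and since $\omega_R$ separates points of $R$ this is equivalent to $\varphi(rf) = r\varphi(f)$, i.e. $R$-linearity of $\varphi$. Hence the image of $(\Sym^2_R \omega_R)^{\vee}$ in $\Hom^{\sym}_\kk(\omega_R, R)$ is exactly $\Hom^{\sym}_R(\omega_R, R)$, which is the claim. There is no real obstacle here; the only subtlety is keeping the pairings and the $R$-action on $\omega_R$ straight, and the assumption $\chr \kk \neq 2$ enters only to ensure that $\Sym^2_\kk$ and its dual behave symmetrically, so that the identification $(\Sym^2_\kk \omega_R)^{\vee} \simeq \Sym^2_\kk R$ is available.
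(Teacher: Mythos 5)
Your proposal is correct and follows essentially the same route as the paper: dualize the surjection $\Sym^2_{\kk}\omega_R \onto \Sym^2_R\omega_R$, identify the image of $(\Sym^2_R\omega_R)^{\vee}$ inside $(\Sym^2_{\kk}\omega_R)^{\vee}$ as the annihilator of the relations $(rf)\cdot g - f\cdot(rg)$, and unwind that condition into $R$-linearity (the paper phrases the last step on the tensor side, as $\sum_i rr_{1i}\odot r_{2i} = \sum_i r_{1i}\odot rr_{2i}$, while you phrase it directly as $\varphi(rf)=r\varphi(f)$ via the bilinear form $B_\varphi$, but this is only a cosmetic difference). The role of $\chr\kk\neq 2$ is the same in both arguments, namely to split off $\Lambda^2_{\kk}$ so that symmetric bilinear forms correspond to $\Sym^2_{\kk}$ of the dual.
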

        \begin{proof}
            Let $r_1\odot r_2$ denote the class of $r_1\otimes r_2$ in
            $\Sym^2_{\kk} R$.
            The subspace $(\Sym^2_R \omega_R)^{\vee}$ consists of elements $\sum_i
            r_{1i}\odot r_{2i}$ such that, for all $r\in R$ and $f,g\in
            \omega_R$, the following condition holds:
            \[
                \left\langle (r f)\odot g - f\odot (rg), \sum_i
                r_{1i}\odot r_{2i}\right\rangle = 0.
            \]
            This means that, for every $f,g$ and $r$, we have
            \[
                \sum_i (rf)(r_{1i})\cdot g(r_{2i}) + (rf)(r_{2i})\cdot
                g(r_{1i}) = \sum_{i} f(r_{1i})\cdot g(rr_{2i}) +
                f(r_{2i}) \cdot g(rr_{1i}).
            \]
            This holds for every functional $g$, which implies that
            \begin{equation}\label{eq:functionals}
                \sum_i (rf)(r_{1i})\cdot r_{2i} + (rf)(r_{2i})\cdot
                r_{1i} = \sum_{i} f(r_{1i})\cdot rr_{2i} +
                f(r_{2i}) \cdot rr_{1i}
            \end{equation}
            Let $\varphi\in \Hom_{\kk}^{\sym}(\omega_R, R)$ be the element
            corresponding to $\sum_i r_{1i}\odot r_{2i}$ above. The value
            $\varphi(rf)$ is the left hand
            side of~\eqref{eq:functionals}, while $r\varphi(f)$ is the right hand
            side. Equality~\eqref{eq:functionals} shows that $\varphi$ is
            $R$-linear. The argument can be reserved.
        \end{proof}
          
        We now give a sample computation of $\Hom^{\sym}_R(\omega_R, R)$ for a monomial algebra.

        \begin{example}

           Let $R = \kk[x, y]/(x^2, xy^2, y^5) = \kk[x, y]/\Ann(Y^4, XY)$ with $\degold(R) = 7$. 
            In this case, $\omega_R$ is generated by $Y^4$ and $XY$, which correspond to the
            functionals dual to $y^4$ and $xy$ in the monomial basis, respectively.

            The vector space $\Hom_R(\omega_R, R)$ has dimension $9$ and is spanned
            by the $9$ homomorphisms $\varphi$ given by:
            \[
                \begin{array}{c | c c c c c c c c c}
                    \varphi(XY) & x&x\,y&y^{3}&y^{4}&0&0&0&0&0\\
                    \varphi(Y^4) & 0&0&0&0&x&x\,y&y^{2}&y^{3}&y^{4}
                \end{array}
            \]
            The subspace of symmetric homomorphisms  $\Hom^{\sym}_R(\omega_R, R)$ is $7$-dimensional (as will be shown in \Cref{ref:HaimanLikeBundle:prop}) and is spanned by
            \[
                \begin{array}{c | c c c c c c c c c}
                    \varphi(XY) & x&x\,y&y^{3}&y^{4}&0&0&0\\
                    \varphi(Y^4) & 0&0&x&x\,y&y^{2}&y^{3}&y^{4}
                \end{array}
            \]
        \end{example}
        The theory of bicanonical modules will be developed further in a
        subsequent paper.

    \subsection{Broken Gorenstein algebra structures: planar case}

    The definition of a broken Gorenstein  structure in
    \Cref{def:brokenGorenstein} may initially appear abstract and dry. To
    provide a more conceptual understanding, we start this section with two
    examples that are of independent interest.

    \begin{example}[Planar monomial
        ideals]\label{ex:monomialIdealsBrokenStructure}
        Let $R = \kk[x, y]/I$ be a finite $\kk$-algebra, with $I$ a monomial ideal.
        Write $I = (y^{e}, y^{e-1}x^{m_{e-1}}, \ldots , y^1x^{m_1},
        x^{m_0})$ with $m_{e-1} \leq \cdots \leq m_1 \leq m_0$.
        Then, $R$ has a broken Gorenstein  structure with no flips and
        with subquotients of the form $\kk[x]/(x^{m_i})$ with $i=0,1, \ldots
        ,e-1$. To see this, consider the chain of principal ideals
        \[
            0 = y^e R \subseteq y^{e-1} R\subseteq  \ldots \subseteq y
            R\subseteq R.
        \]
    \end{example}
The above broken Gorenstein structure is not unique. 
By replacing the roles of
$x$ and $y$ above, we get another one. Usually, there are (many) more than
two, because we can, for example, interchange the roles of $x$ and $y$ along the chain. 
One
concrete example is $I = (x, y)^3$ and the broken Gorenstein structure on $R =
\kk[x, y]/I$ given by
\[
    0 \subseteq Rxy \subseteq Rx \subseteq R,
\]
where the subquotients are $R/(x) \simeq \kk[y]/(y^3)$, $Rx/Rxy \simeq
\kk[x]/(x^2)$ and $Rxy \simeq \kk$.

More generally, all planar ideals admit a broken Gorenstein structure with no flips.

\begin{example}[Planar ideals] \label{example_planar_ideals}
We follow~\cite[Lemma~8.12]{hartshorne_deformation_theory}, which, in hindsight, points towards a
$1$-broken Gorenstein structure.
Let $R = \kk[x,y]/I$ be a finite $\kk$-algebra with the radical of $I$ equal to
$\mm = (x, y)$. 
Choose $g \in \mm^s - \mm^{s+1}$ with $s$ minimal. 
Then, up to a
change of coordinates, we may assume that the lowest degree form of $g$ is $g_0 = x^s + \cdots$.
Subtracting multiples of $g$ from itself, we may assume that $g$,
{considered as a polynomial in $x$}, is of degree $s$, with leading term $x^s$. In particular, we can write
$I = (g) + yI'$ where $I' = (I:y)$. This gives us an exact sequence
$$
0 \longrightarrow \kk[x,y]/I' \simeq yR \longrightarrow R \longrightarrow R/(y) \longrightarrow 0.
$$ 
Since $R/yR= \kk[x,y]/(x^s,y)$ is Gorenstein, we see that $R$ has a broken Gorenstein structure  if $\kk[x,y]/I'$ has one. Since $\degold (\kk[x,y]/I') < \degold (R)$, we may repeat this procedure iteratively to conclude that $R$ has a broken Gorenstein structure with no flips.
\end{example}

We take a small detour now, to observe that the bicanonical module yields a
    ``global'' invariant of the Hilbert scheme on the plane. 
    \begin{prop}\label{ref:HaimanLikeBundle:prop}
        There is a rank $d$ bundle on the Hilbert scheme
        $\Hilb^d(\AA^2)$, such that the fiber of this bundle over
        $[R]\in \Hilb^d(\AA^2)$ is isomorphic to the bicanonical module
        of $R$.
    \end{prop}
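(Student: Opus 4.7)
The plan is to realize the desired bundle as a pushforward to $H := \Hilb^d(\AA^2)$ of a natural coherent sheaf on the universal family, and then to deduce local freeness of rank $d$ by combining upper semicontinuity of fiber dimensions with the bound of \Cref{ref:brokenGorensteinCanonical:cor}, which applies to every point of $H$ via the planar broken Gorenstein structures of \Cref{example_planar_ideals}.

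Let $\pi\colon \mcZ\to H$ denote the universal family, which is finite and flat of degree $d$. Finite flat morphisms admit a relative dualizing sheaf $\omega_{\mcZ/H}$ that is coherent over $\mcO_{\mcZ}$ and compatible with arbitrary base change: affine locally, with $H = \Spec A$ and $\mcZ = \Spec B$, it corresponds to the $B$-module $\Hom_A(B, A)$. Set
\[
\mathcal{B} := \pi_*\Sym^2_{\mcO_{\mcZ}} \omega_{\mcZ/H}.
\]
Since $\pi$ is affine $\pi_*$ is exact, and since $\Sym^2$ commutes with arbitrary base change, the fiber of $\mathcal{B}$ over $[R]\in H$ is canonically identified with the bicanonical module $\Sym^2_R \omega_R$.

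It remains to show $\mathcal{B}$ is locally free of rank $d$. The Hilbert scheme $H$ is irreducible by Fogarty's theorem, and over the dense open locus $U\subseteq H$ parameterizing $d$ distinct reduced points, $R\simeq \kk^d$ is Gorenstein, hence $\omega_R \simeq R$ and $\degold \Sym^2_R \omega_R = d$. Upper semicontinuity of fiber dimensions of a coherent sheaf therefore forces $\degold \Sym^2_R \omega_R \geq d$ at every $[R]\in H$. For the reverse inequality, decompose any $R$ as a product $R \simeq \prod_p R_p$ over its local factors; then $\omega_R \simeq \bigoplus_p \omega_{R_p}$ and $\Sym^2_R\omega_R \simeq \bigoplus_p \Sym^2_{R_p}\omega_{R_p}$, since the idempotents in $R$ kill the mixed terms. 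Each local factor $R_p$ is a quotient of $\kk[x,y]$ with radical the maximal ideal, and admits a broken Gorenstein structure without flips by \Cref{example_planar_ideals}, so \Cref{ref:brokenGorensteinCanonical:cor} yields $\degold \Sym^2_{R_p}\omega_{R_p}\leq \degold R_p$, and summing gives $\degold\Sym^2_R \omega_R \leq d$. Thus $\mathcal{B}$ has constant fiber dimension $d$ on the reduced (indeed smooth) scheme $H$, and a coherent sheaf on a reduced scheme with locally constant fiber dimension is locally free, so $\mathcal{B}$ is the desired rank $d$ bundle.

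The main technical subtlety is the base change compatibility of $\omega_{\mcZ/H}$ and of $\Sym^2$, both of which are standard for finite flat morphisms and right-exact functors, respectively. The decisive input is the a priori bound $\degold \Sym^2_R \omega_R \leq d$, which is precisely what the planar broken Gorenstein structures supply; the remainder of the argument is formal.
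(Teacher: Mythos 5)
Your proof is correct and follows the same overall skeleton as the paper's: build the sheaf from the (dual of the) universal bundle, identify its fibers with the bicanonical modules, use Fogarty's smoothness and irreducibility, get rank $d$ on the dense Gorenstein locus, and sandwich the fiber dimension between $d$ and $d$ using semicontinuity from below and the broken Gorenstein bound from above. The one step you do genuinely differently is the upper bound $\degold \Sym^2_R\omega_R\leq d$. The paper proves this bound only at torus-fixed points, via \Cref{ex:monomialIdealsBrokenStructure}, and then invokes upper semicontinuity a second time (every point of $\Hilb^d(\AA^2)$ specializes to a monomial point under a Gr\"obner/torus degeneration) to propagate it everywhere. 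You instead prove the bound pointwise at every closed point, decomposing $R$ into its local factors, observing that the idempotents kill the mixed terms of the symmetric square, and applying \Cref{example_planar_ideals} together with \Cref{ref:brokenGorensteinCanonical:cor} to each factor. Your route avoids the second semicontinuity argument entirely and is arguably more self-contained; its only cost is that \Cref{example_planar_ideals} is stated for local algebras supported at the origin, so to apply it to an arbitrary local factor you should, as the paper does explicitly, first reduce to $\kk$ algebraically closed (so that each factor sits at a rational point and can be translated to the origin). With that one sentence added, the argument is complete.
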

    \begin{proof}
        To simplify notation, let $H := \Hilb^d(\AA^2)$, and let
        $U$ be the universal bundle on $H$. The fiber of the dual bundle $U^{\vee}$ over a point 
        $[R]\in H$ is isomorphic to $\omega_R$. Let
        $B
        := \Sym^2_H U^{\vee}$ and note that its fiber over $[R] \in H$ is $\Sym^2_R \omega_R$, the bicanonical module of $R$. We need to show
        that $B$ is locally free of rank $d$. Since the Hilbert scheme is smooth and irreducible \cite{fogarty}, it suffices to show that for every closed point $[R]\in H$, we have $\degold \Sym^2_{R} \omega_R
        = d$ \cite[Corollary 11.19]{gortz_wedhorn_algebraic_geometry_I}. 
   The equality holds for Gorenstein $R$, and since the points
    corresponding to Gorenstein algebras form an open locus in $\Hilb^d(\AA^2)$, it holds
        generically. By the upper-semicontinuity of fiber dimension, we have $\degold \Sym^2_R \omega_R \geq
        d$ for every $R$.
        To prove equality, we again use upper-semicontinuity. It suffices to show that
        for  every algebra $R = \kk[x, y]/I$ with $I$ a monomial ideal, we have $\degold \Sym^2_R \omega_R \leq d$.
        This result follows from applying \Cref{ex:monomialIdealsBrokenStructure} and
        \Cref{ref:brokenGorensteinCanonical:cor}.
   \end{proof}

\begin{remark}
\Cref{ref:HaimanLikeBundle:prop} is particularly striking because the  bundle $B$ is torus-equivariant. 
While Haiman~\cite{Haiman_vanishing2} extensively studied the equivariant K-theory of $\Hilb^d(\AA^2)$, the bundle $B$ does not explicitly appear in the literature. 
It would  be interesting to relate $B$ with other notable bundles on the Hilbert scheme.
   \end{remark}

\subsection{Constructions of broken Gorenstein algebras and the necessity of flips}
The following example gives  an effective method for constructing
broken Gorenstein algebras.

 \begin{example}\label{exampleFlip}
Let $f\in \kk[X, Y, Z]$ and $g\in \kk[Y, Z]$ be polynomials. Let $S = \kk[x,
y, z]$, $R = S/\Ann(f, g)$ and $R_0 = S/\Ann(f)$. The kernel $\cK$ of $R\to
R_0$ is cocyclic (with cogenerator coming from $g$) and annihilated by $x$. Thus, by
\Cref{example_planar_ideals}, $R/\Ann(\cK)$ admits a broken Gorenstein structure
(without flips) and so $R$ admits a structure of a broken
    Gorenstein algebra (with flips).
\end{example}

\begin{remark}\label{RemPoonen}
Using   \Cref{exampleFlip} and 
 \Cref{ThmSmoothMonomialClassification}, 
we can verify \Cref{main_conjecture}  for algebras of degree $d \leq 6$.
There are finitely many  isomorphism types of such algebras, and they  are listed explicitly in  \cite{Poonen}.
A simple check shows that,
among the 
 algebras of embedding dimension at most 3 and degree at most 6,
 those  corresponding to smooth points satisfy the conditions of  \Cref{exampleFlip},
while those corresponding to singular points are defined by monomial ideals.
Thus, \Cref{exampleFlip} and 
 \Cref{ThmSmoothMonomialClassification} imply the equivalence
 $\eqref{it:main_conj:brokenGor}\Leftrightarrow\eqref{it:main_conj:smooth}$ in  \Cref{main_conjecture}.
 As already stated, the direction $\eqref{it:main_conj:licci}\Rightarrow\eqref{it:main_conj:smooth}$ is well known, 
 and the direction  $\eqref{it:main_conj:smooth}\Rightarrow\eqref{it:main_conj:licci}$ follows, for these algebras, assuming that $\kk$ has characteristic 0, from the fact that 
 ideals with small type and small deviation are licci
\cite[Theorem 6.2]{GNW__ADE}.
\end{remark}

The theory of broken Gorenstein algebras without flips is much easier, as seen
already in~\eqref{eq:noflipsFlag} and soon to be confirmed by \Cref{ref:NoFlipsLicci:thm}.
It is natural to wonder whether flips are necessary in
\Cref{def:brokenGorenstein}, that is, whether there exist smooth points with
broken Gorenstein structure that requires flips. The example below confirms this.

\begin{example}[An algebra with broken Gorenstein structure, but none without flips] \label{example_flips} 
Let $R =\kk[x,y,z]/I$ where $I = (yz,x^2z,xy^2-xz^2,x^2y,x^3+y^3,x^4,y^4,z^3)
= \Ann(X^3-Y^3, XY^2+XZ^2)$.
This is a graded algebra with Hilbert function $(1,3,5,2)$. 
We  first show that $R$ has a broken Gorenstein structure with flips.

Consider the submodule $\cK = (y^2-z^2,x^2)R$ and the natural exact sequence $0 \to \cK \to R \to R_0$ with $R_0 =\kk[x,y,z]/(yz,y^{2}-z^{2},x^{2},y^{3},z^{3})$.
The algebra $R_0$ is Gorenstein and has Hilbert function $(1,3,3,1)$. 
Since the $R$-module $\cK$ has Hilbert function $(0, 0, 2, 1)$, it is not cyclic. 
However, the dual module $\cK^{\vee}$ is cyclic, since it is isomorphic to $R' = \kk[x, y, z]/(z, x^2, xy, y^2)$. The algebra $R'$ admits a broken Gorenstein structure (without flips) by
 \Cref{ex:monomialIdealsBrokenStructure}. 
 Thus, $R$ admits a broken Gorenstein structure.

\smallskip
We now show that any broken Gorenstein structure on $R$ must have flips. 
If there were no flips, then we could find an exact sequence 
\[
0 \to \mathcal{K}  = fR \longrightarrow R \longrightarrow R_0 \longrightarrow 0
\]
such that $R_0 = R/(f) = \kk[x,y,z]/(I+f)$ is Gorenstein.
We claim  that, no matter how we choose $f\in \kk[x, y, z]$, we will always get a contradiction.
If $f\in (x, y, z)^2$, then the Hilbert function of $R/f$ has the form
$(1,3,\geq 4,*)$. Such a Hilbert function is not possible for a Gorenstein
algebra by~\cite[\S4]{EliasRossiShortGorenstein}
following~\cite[Proposition~1.9]{ia94}.
We conclude that $f\not\in(x, y, z)^2$, so it has a nontrivial linear part.
Write $f = l_1x + l_2y + l_3z + Q$ with $Q \in (x,y,z)^2$.
Observe that $x^3$, $xz^2$ form a basis of the degree three part of $R$.
We have the following equalities
\begin{itemize}
\item $x^2f = l_1 x^3 + l_2 x^2y + l_3 x^2 z +  x^2Q \equiv l_1 x^3 \bmod I$,
\item $z^2f = l_1 z^2x + l_2 yz^2 + l_3 z^3 +  z^2Q \equiv l_1 xz^2 \bmod I$,
\item $y^2f = l_1 xy^2 + l_2 y^3 + l_3 y^2z +  y^2Q \equiv  l_1xz^2 - l_2 x^3 \bmod I$,
\item $xyf = l_1 x^2y + l_2 xy^2 + l_3 xyz +  xyQ \equiv l_2 xz^2 \bmod I$.
\end{itemize}
In particular,  if $l_1 \ne 0$ or $l_2 \ne 0$ we get that $(x,y,z)^3 \subseteq
I+(f)  $, and that the Hilbert function of $R_0$ is either $(1,2,3)$ or
$(1,2,2)$, contradicting the fact that it should be Gorenstein. We conclude
that $f = z + Q$ with $Q\in (x, y, z)^2$.
Since $yz$ annihilates $R$, it follows that $((xf,yf,zf) + (x,y,z)^3)R$ is equal to $((xz,z^2) +(x,y,z)^3)R$. Thus, the quotient $R/(f)$ has Hilbert function
$(1,2,3,*)$, which is again impossible for a Gorenstein algebra by~\cite[\S4]{EliasRossiShortGorenstein}
following~\cite[Proposition~1.9]{ia94}.
\end{example}

   \subsection{Broken Gorenstein  implies smoothness}
The connection between bicanonical modules and broken Gorenstein structures is established by the following pivotal lemma. 
This lemma will ultimately allow us to provide upper bounds for the tangent space.

    Let $S$ be a fixed $\kk$-algebra. 
    Let $0\to \cK\to R\to R_0\to 0$ be a short exact sequence of $S$-modules,
    with $R$ and $R_0$ cyclic. The natural map $\Hom_S(R_0, R_0)\to
    \Hom_S(R, R_0)$ is an isomorphism. Consequently, the long exact sequence for $\Ext_S$
    yields the following exact sequence
    \begin{equation}\label{eq:lesExt}
        \begin{tikzcd}
            0\ar[r] & \Hom_{S}(\cK, R_0) \ar[r] & \Ext^1_{S}(R_0, R_0) \ar[r, "\varphi"]
            & \Ext^1_{S}(R, R_0)\ar[r] &
            \Ext^1_{S}(\cK, R_0).
        \end{tikzcd}
    \end{equation}

            \begin{lemma}[cokernel image]\label{ref:cokernel:lem}
                Let $R$ be a finite quotient of a polynomial ring $S$ and $0\to \cK\to R\to R_0\to 0$ be a
                short exact sequence with $R_0$ Gorenstein.
                In the setting of \Cref{eq:lesExt}, we have
                \[
                    \degold \coker \varphi - \degold \ker \varphi \leq \degold
                    \Sym^2_R \cK - \degold \cK.
                \]
                If $2$ is invertible in $\kk$ and $R_0$ has embedding dimension
                three, then equality holds.
            \end{lemma}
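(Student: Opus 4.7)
The plan is to use $\Tor$-$\Ext$ duality together with the multiplicative structure on $\cK$ to reformulate both sides of the inequality in terms of the multiplication map $\mu\colon \cK\otimes_R\cK\to\cK$.

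First I would analyze $\ker\varphi$. From the long exact sequence of $\Hom_S(-,R_0)$ applied to $0\to\cK\to R\to R_0\to 0$, the identification $\ker\varphi\simeq\Hom_S(\cK,R_0)$ follows. Since $R_0\simeq\omega_{R_0}$ is Gorenstein and annihilated by $\cK$, any such map factors through $\cK/\cK^2$, giving $\degold\ker\varphi=\degold\cK/\cK^2$. For $\coker\varphi$, the same long exact sequence embeds $\coker\varphi$ into $\Ext^1_S(\cK,R_0)$. Using the Gorensteinness of $R_0$ to apply \Cref{ref:GorensteinTor:lem} with $R_0$ in place of $R$, I would get $\Ext^1_S(\cK,R_0)\simeq\Tor_1^S(\cK,R_0)^\vee$. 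Dualizing the Ext long exact sequence then identifies $(\coker\varphi)^\vee$ with $\Tor_1^S(\cK,R_0)/\im(\Tor_2^S(R_0,R_0)\to\Tor_1^S(\cK,R_0))$.

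Next I would apply the long exact sequence for $\Tor_\bullet^S(\cK,-)$ to $0\to\cK\to R\to R_0\to 0$, using $\cK\otimes_S R\simeq\cK$ and $\cK\otimes_S R_0\simeq\cK/\cK^2$. This yields
\[
\Tor_1^S(\cK,R) \to \Tor_1^S(\cK,R_0) \to \cK\otimes_R\cK \xrightarrow{\mu} \cK \to \cK/\cK^2 \to 0,
\]
so the image of $\Tor_1^S(\cK,R_0)$ in $\cK\otimes_R\cK$ is exactly $\ker\mu$. When $1/2\in\kk$, the splitting $\cK\otimes_R\cK=\Sym^2_R\cK\oplus\wedge^2_R\cK$ combined with the symmetry of $\mu$ gives $\ker\mu=\ker(\Sym^2_R\cK\to\cK^2)\oplus\wedge^2_R\cK$. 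A standard antisymmetry-of-double-connecting-maps argument shows that the image of the composite $\Tor_2^S(R_0,R_0)\to\Tor_1^S(\cK,R_0)\to\cK\otimes_R\cK$ lies in $\wedge^2_R\cK$, so after quotienting out this image, $(\coker\varphi)^\vee$ maps into $\ker(\Sym^2_R\cK\to\cK^2)$. Tracking dimensions and combining with the formula for $\ker\varphi$ yields the desired inequality
\[
\degold\coker\varphi-\degold\ker\varphi\leq\degold\Sym^2_R\cK-\degold\cK.
\]

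For the equality case (char $\neq 2$ and $R_0$ of embedding dimension three), the Buchsbaum--Eisenbud structure theorem forces the antisymmetric summand of $\ker\mu$ to match $\ker(\Ext^2_S(R_0,R_0)\to\Ext^2_S(R,R_0))$ exactly and makes the contribution of $\Tor_1^S(\cK,R)$ to $\Tor_1^S(\cK,R_0)$ vanish in the relevant direction, saturating each inequality. The main obstacle is justifying that the image of $\Tor_2^S(R_0,R_0)$ lies in the antisymmetric summand; this is an instance of the standard antisymmetry of double connecting maps, but verifying it in our setup requires careful chain-level bookkeeping through a double complex comparing free resolutions of $\cK$ and $R_0$. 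A secondary technicality is to ensure that any contribution from $\Tor_1^S(\cK,R)$ splits compatibly under the involution on $\Tor_1^S(\cK,R_0)$, so the upper bound persists in general.
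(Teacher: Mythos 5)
Your identification of $\ker\varphi$ is correct: $\Hom_S(\cK,R_0)\simeq\Hom_{R_0}(\cK/\cK^2,\omega_{R_0})$ has degree $\degold(\cK/\cK^2)$, matching the paper's computation of $\coker\varphi^{\intercal}=I/(J+I^2)$, and it correctly reduces the lemma to the bound $\degold\coker\varphi\le\degold\ker(\Sym^2_R\cK\onto\cK^2)$. The problem is the other half. Write $\partial\colon\Tor_2^S(R_0,R_0)\to\Tor_1^S(\cK,R_0)$ and $\delta\colon\Tor_1^S(\cK,R_0)\to\cK\otimes_R\cK$ for your two connecting maps, so that $(\coker\varphi)^{\vee}=\Tor_1^S(\cK,R_0)/\im\partial$, $\im\delta=\ker\mu$, and $\ker\delta=\im\bigl(\Tor_1^S(\cK,R)\to\Tor_1^S(\cK,R_0)\bigr)$. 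Granting your antisymmetry claim $\delta(\im\partial)\subseteq\Lambda^2_R\cK$, the induced map $(\coker\varphi)^{\vee}\to\Sym^2_R\cK$ is \emph{surjective} onto $\ker(\Sym^2_R\cK\to\cK^2)$, with kernel $\delta^{-1}(\Lambda^2_R\cK)/\im\partial$. So ``maps into'' only yields the \emph{lower} bound $\degold\coker\varphi\ge\degold\ker(\Sym^2_R\cK\to\cK^2)$ --- the wrong direction. To get the upper bound you need injectivity, i.e.\ $\im\partial=\delta^{-1}(\Lambda^2_R\cK)$, which requires both $\ker\delta\subseteq\im\partial$ and surjectivity of $\delta\circ\partial$ onto $\Lambda^2_R\cK$; you prove neither, and the contribution of $\Tor_1^S(\cK,R)$ that you set aside as a ``secondary technicality'' is exactly this missing piece.

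Worse, the statements you would need cannot all hold in general. Writing $R=S/J$, $R_0=S/I$, one computes $\coker\varphi\simeq\bigl((I^2\cap J)/IJ\bigr)^{\vee}$, and (given your antisymmetry claim) the identity $\im\partial=\delta^{-1}(\Lambda^2_R\cK)$ is equivalent to $\degold(I^2/IJ)=\degold\Sym^2_R\cK$, i.e.\ to the surjection $\Sym^2_R(I/J)\onto I^2/IJ$ being an isomorphism. That is precisely the equality case of the lemma, which holds because $I$ is syzygetic when $R_0$ has embedding dimension three (Simis--Vasconcelos) and fails in general. So whenever the inequality is strict, the antisymmetry claim and the needed injectivity are incompatible, and the dimension count collapses. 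Note also that the unconditional inequality must hold in every characteristic, whereas your strategy rests throughout on the splitting $\cK\otimes_R\cK=\Sym^2_R\cK\oplus\Lambda^2_R\cK$. The paper sidesteps all of this: it dualizes $\varphi$ itself to the concrete map $J/IJ\to I/I^2$, reads off kernel and cokernel as $(I^2\cap J)/IJ$ and $I/(J+I^2)$, and uses only that $I^2/IJ$ is a quotient of $\Sym^2_R(I/J)$, invoking syzygeticity solely for the equality case.
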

            \begin{proof}
                Let $R = S/J$ and $R_0 = S/I$. From the surjection $R\to R_0$,
                we get $I\supseteq J$.
                Since $R_0$ is Gorenstein, 
                the functors $(-)^{\vee}$ and $\Hom_{R_0}(-, R_0)$ are
                isomorphic on the category of $R_0$-modules,
                see~\Cref{ssec:zerodimensional}.
                Since $R_0$ is Gorenstein, this is an exact functor,
                 and  $\Ext^i_S(-, R_0)  \simeq \Tor_i^S(-, R_0)^{\vee}$ by  \Cref{ref:GorensteinTor:lem}. 
                 The
                map
                \[
                    \varphi\colon \Ext^1_S(R_0, R_0) \to \Ext^1_S(R, R_0)
                \]
                is thus the dual of $\varphi^\intercal\colon \Tor_1^S(R, R_0)\to \Tor_1^S(R_0, R_0)$, which
                in turn identifies with the natural map
                \[
                    \varphi^\intercal\colon \frac{J}{IJ} = \frac{I\cap J}{IJ}\to
                    \frac{I\cap I}{I^2} = \frac{I}{I^2}.
                \]
                The kernel and cokernel of this last map are $\frac{I^2\cap J}{IJ}$ and
                $\frac{I}{J+I^2}$, respectively. The intersection $I^2 \cap J$
                is not convenient to interpret directly, so we modify it slightly. 
                Recall that
                since $I \supseteq J$ we have $I^2 \supseteq I^2 \cap J \supseteq IJ$. Thus, we obtain
                \begin{align*} \label{eq:change}
\degold \left(\frac{I^2\cap J}{IJ}\right) = \degold \left(\frac{I^2}{IJ}\right) - \degold\left(\frac{I^2}{I^2\cap J}\right), \quad \text{ and } \quad 
                \degold \left(\frac{I}{J + I^2}\right) = \degold\left(\frac{I}{J}\right) - \degold\left(\frac{I^2 + J}{J}\right).
                \end{align*}
                The modules $I^2/(I^2 \cap J)$ and $(I^2 + J)/J$ are
                isomorphic, so we obtain
                \begin{equation}\label{eq:dualised}
                    \degold \coker\varphi - \degold \ker \varphi =
                    \degold \ker\varphi^\intercal - \degold \coker \varphi^\intercal =
                    \degold \frac{I^2}{IJ} - \degold
                    \frac{I}{J} = \degold \frac{I^2}{IJ} - \degold \cK.
                \end{equation}
                The ideal $I^2$ is the image of $\Sym ^2_S(I)$, and so $I^2/(IJ)$ is an image of
                $\Sym^2_R(I)/(J\cdot I)  \simeq \Sym^2_R(I/J) = \Sym^2_R \cK$,
                see~\cite[\href{https://stacks.math.columbia.edu/tag/00DO}{Tag
                00DO}]{stacks_project} for the isomorphism. Thus, the claim follows
                from \Cref{eq:dualised}.

                Suppose now that $1/2\in \kk$ and that $R_0$ has embedding
                dimension three. It follows from~\cite[Example,
                p.~209]{Simis_Vasconcelos__The_syzygies_of_conormal_module} that
                $I$ is syzygetic, so $\Sym^2_S(I)  \simeq I^2$.\
                Therefore, the map $\Sym^2_R(I/J)\to I^2/IJ$ is also an isomorphism and equality holds.
            \end{proof}

\begin{definition} \label{def_tangent_excess}
            Let $S = \kk[x_1, \ldots ,x_n]$.
            For an $S$-module $M$ of finite degree, we define the
            \textbf{smoothable tangent excess of $M$} (or \textbf{at $[M]$}) to be
            the number
            \[
                \delta_M := \degold Ext^1_S(M, M) - n\cdot\degold M.
            \]
\end{definition}

            \begin{thm}\label{ref:canitbetrue:thm}
                Let $S = \kk[x, y, z]$ and let  $R$ be a finite quotient algebra of $S$.
                Suppose that there is a short exact sequence
                \[
                    0\to \cK\to R\to R_0\to 0
                \]
                such that $\cK$ is either cyclic or cocyclic, and $R_0$ is
                Gorenstein.
                Then we have
                \begin{equation}\label{eq:tangent_excessChange}
                    \delta_R \leq \delta_{\cK} + 2\left( \degold (\Sym_R^2 \cK) -
                    \degold \cK \right).
                \end{equation}
                If equality holds in \Cref{eq:tangent_excessChange}, then the
                following also holds in the notation of \Cref{fig:les}:
                \begin{enumerate}
                    \item\label{it:canitbetrue1} The map $b' + \varphi\colon \Ext^1_S(R, R)\oplus
                        \Ext^1(R_0, R_0)\to \Ext^1_S(R, R_0)$ is surjective.
                    \item\label{it:canitbetrue2} The image of $d\colon \Ext^1_S(\cK, \cK)\to
                        \Ext^1_S(\cK, R)$ is contained in the image of
                        $c'\colon \Ext^1_S(R, R)\to \Ext^1_S(\cK, R)$.
                \end{enumerate}
            \end{thm}
            \begin{proof}
We will prove this by bounding the degree of $\Ext^1_S(R, R)$ from above. Consider \Cref{fig:les},  derived from three long exact sequences of $\Ext_S$ groups obtained from the exact sequence $0\to \cK\to R\to R_0\to 0$.
                \begin{figure}[h!]
                    \begin{tikzcd}
                        &0\ar[d]&0\ar[d]&0\ar[d]\\
                            0 \ar[r] & \Hom_{S}(R_0, \cK) \ar[r] \ar[d] & \Hom_{S}(R_0,
                                R) \ar[r] \ar[d] & \Hom_{S}(R_0, R_0) \ar[d,
                                "\simeq"] & \\
                            0 \ar[r] & \Hom_{S}(R, \cK) \ar[r] \ar[d] & \Hom_{S}(R,
                                R) \ar[r] \ar[d] & \Hom_{S}(R, R_0) \ar[d] & \\
                            0 \ar[r] & \Hom_{S}(\cK, \cK) \ar[r] \ar[d] & \Hom_{S}(\cK,
                                R) \ar[r] \ar[d] & \Hom_{S}(\cK, R_0) \ar[d] & \\
                            \ar[r] & \Ext^1_{S}(R_0, \cK) \ar[r] \ar[d] &
                            \Ext^1_{S}(R_0, R) \ar[r] \ar[d, "\varphi'"] &
                            \Ext^1_{S}(R_0, R_0) \ar[d, "\varphi"] & \\
                            \ar[r] & \Ext^1_{S}(R, \cK) \ar[r] \ar[d] &
                            \Ext^1_{S}(R, R) \ar[r, "b'"] \ar[d, "c'"] &
                            \Ext^1_{S}(R, R_0) \ar[d, "c"] & \\
                            \ar[r] & \Ext^1_{S}(\cK, \cK) \ar[r, "d"]  & \Ext^1_{S}(\cK,
                            R) \ar[r, "b"]  &
                            \Ext^1_{S}(\cK, R_0)
                    \end{tikzcd}
                    \caption{Long exact sequence of
                    $\Ext$-modules}\label{fig:les}
                \end{figure}
                We have
                \[
                    \degold \Ext^1_S(R, R) = \degold \im c' + \degold \im \varphi'.
                \]
                The map $bc'$ factors through $c$ and $\im c = \coker
                \varphi$, so we have
                \begin{equation}\label{eq:ineqbcISc}
                    \degold \im c' \leq \degold \im(bc') + \degold \im d \leq \degold
                    \im c  + \degold \im d = \degold \coker \varphi + \degold
                    \im d.
                \end{equation}
                We also have
                \begin{align*}
                    \degold \im d &= \degold \Ext^1_S(\cK, \cK) - \degold \Hom_S(\cK,
                    R_0) + \degold \Hom_S(\cK, R) - \degold \Hom_S(\cK, \cK), \text{ and }\\
                    \degold \im \varphi' &= \degold \Ext^1_S(R_0, R) - \degold \Hom_S(\cK,
                    R) + \degold \Hom_S(R, R) - \degold \Hom_S(R_0, R).
                \end{align*}
                By definition, we have
                $\degold \Ext^1_S(\cK, \cK) =
                3\degold \cK + \delta_\cK$. Since $\cK$ is cyclic or cocyclic, it follows that
                $\degold \Hom(\cK, \cK) = \degold \cK$. Since $R$ is cyclic, we also have
                $\degold \Hom_S(R, R) =
                \degold R = \degold \cK + \degold R_0$. By definition, $\Hom_S(\cK, R_0) = \ker \varphi$. Substituting
                these into the equations above, we obtain
                \begin{align}\label{eq:main}
                    \degold \Ext^1_S(R,R) &\leq \degold \im d + \degold \im \varphi' +
                    \degold \coker \varphi \\ \notag
                    & =  3\degold \cK + \delta_\cK + \degold \coker \varphi - \degold\ker \varphi + \\  
                    & \,\quad   \degold R_0 + \degold \Ext^1_S(R_0, R) - \degold \Hom_S(R_0, R).\notag
                \end{align}
                Now, \Cref{ref:transposition:lem} implies that $\sum_{i=0}^3
                (-1)^i\degold \Ext^i_S(R_0, R) = 0$, which in turn implies that
                \[
                    \degold \Ext^1_S(R_0, R) - \degold \Hom_S(R_0, R) = \degold
                    \Ext^2_S(R_0, R) - \degold \Ext^3_S(R_0, R).
                \]
                Applying Serre duality \cite[Lemma~2.2]{RS22} to the summands on the right-hand side, we get
                \begin{equation}\label{eq:second}
                    \degold \Ext^1_S(R_0, R) - \degold \Hom_S(R_0, R) = \degold
                    \Ext^1_S(R, R_0) - \degold \Hom_S(R, R_0).
                \end{equation}
                We have $\degold \Hom_S(R, R_0) = \degold R_0$ and $\degold
                \Ext^1_S(R, R_0) = \degold \Ext^1_S(R_0, R_0) - \degold \ker
                \varphi + \degold \coker \varphi$. Since $R_0$ is Gorenstein,
                we have $\degold \Ext^1_S(R_0, R_0) = 3\degold R_0$. Substituting
                these equalities into \Cref{eq:second} we obtain
                \begin{equation*}
                    \degold \Ext^1_S(R_0, R) - \degold \Hom_S(R_0, R) = 
                    2\degold R_0 - \degold \ker \varphi + \degold \coker \varphi.
                \end{equation*}
                Plugging this
                into \Cref{eq:main}, we obtain
                \[
                    \degold \Ext^1_S(R, R) \leq 3\degold R + \delta_\cK +
                    2\left(\degold \coker
                    \varphi - \degold \ker \varphi
                    \right).
                \]
                By \Cref{ref:cokernel:lem}, we have
                \[
                    \degold \coker \varphi - \degold \ker \varphi \leq \degold \Sym_R^2 \cK - \degold \cK,
                \]
                which concludes the proof of the inequality. 
If equality holds in the above equation, then all the inequalities in \Cref{eq:ineqbcISc} must be equalities. 
When the leftmost inequality in \Cref{eq:ineqbcISc} is an equality, it implies that $\im(c')$ contains $\im(d)$. 
If the second inequality in \Cref{eq:ineqbcISc} is an equality, then $\im(cb') = \im(c)$. 
Since $bc' = cb'$, it follows that $\im(b'c) = \im(c)$, and thus $\Ext^1_S(R, R_0) = \im(b') + \ker(c) = \im(b') + \im(\varphi)$.
            \end{proof}

            \begin{cor}\label{ref:smoothnessOfSmoothableBroken:cor}
                Let $S = \kk[x, y, z]$ and  $R$  a finite quotient
                algebra of $S$.
                Assume $R$ has a broken Gorenstein  structure,
                with first step given by the exact sequence
                $0\to \cK\to R\to R_0\to 0$.
Then, we have $\delta_R \leq 0$.

                If we also assume that $R$ is smoothable, then
                \begin{enumerate}
                    \item $[R]\in\Hilb(\AA^3)$ is a smooth point,
                    \item $[R\onto R_0]\in \Hilb^{d, d_0}(\AA^3)$ is a smooth
                        point of the nested Hilbert scheme, and
                    \item the map of abstract deformation functors
                        $\Def_{\cK \subseteq R}\to \Def_\cK$ is smooth.
                \end{enumerate}
            \end{cor}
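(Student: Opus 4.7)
The strategy is to first prove $\delta_R \leq 0$ by induction on the length $k$ of the broken Gorenstein structure, and then extract the three smoothness claims from this bound together with the equality-case conclusions of \Cref{ref:canitbetrue:thm}. The base case $k=0$ (with $R$ Gorenstein) follows from the classical identity $\degold \Ext^1_S(R, R) = 3 \degold R$ in codimension three, which gives $\delta_R = 0$. For the inductive step, given the defining sequence $0 \to \cK \to R \to R_0 \to 0$, \Cref{ref:canitbetrue:thm} yields $\delta_R \leq \delta_\cK + 2(\degold \Sym_R^2 \cK - \degold \cK)$. Setting $R' := R/\Ann(\cK)$, which by hypothesis carries a $(k-1)$-broken Gorenstein structure, I would split into two cases. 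If $\cK$ is cyclic, then $\cK \simeq R'$, so $\Sym_R^2 \cK \simeq \cK$ (the second summand vanishes) while $\delta_\cK = \delta_{R'} \leq 0$ by induction. If $\cK$ is cocyclic, then $\cK \simeq \omega_{R'}$: the second summand is $\leq 0$ by \Cref{ref:brokenGorensteinCanonical:cor} applied to $R'$, and \Cref{ref:transposition:lem} yields $\Ext^1_S(\cK, \cK) \simeq \Ext^1_S(R', R')$, so again $\delta_\cK = \delta_{R'} \leq 0$. In both cases $\delta_R \leq 0$, completing the induction.

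Now assume additionally that $R$ is smoothable. The smoothable component of $\Hilb^d(\AA^3)$ through $[R]$ has dimension $3d$, while $T_{[R]} \Hilb^d(\AA^3)$ has dimension $3d + \delta_R \leq 3d$. The two are forced to agree, giving (1) and, crucially, forcing every intermediate inequality throughout the induction to be an equality. The equality part of \Cref{ref:canitbetrue:thm}, applied at each step, then yields (a) the surjectivity of $b' + \varphi \colon \Ext^1_S(R, R) \oplus \Ext^1_S(R_0, R_0) \to \Ext^1_S(R, R_0)$, and (b) the inclusion $\im d \subseteq \im c'$ in $\Ext^1_S(\cK, R)$. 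Assertion (a) is precisely the pushout criterion recalled in \Cref{ssec:defFunct}, so it equips $\Def_{R \onto R_0}$ with a well-behaved obstruction theory; combined with the smoothability of $R_0$ (automatic, since $R_0$ is a codimension-three Gorenstein quotient) and a dimension count matching $T\Def_{R \onto R_0}$ with the smoothable component of $\Hilb^{d,d_0}(\AA^3)$, this gives (2). Assertion (b), via the tangent-space description in \Cref{ssec:defFunct}, means that the tangent map $T\Def_{\cK \subseteq R} \to T\Def_\cK$ is surjective; combined with the obstruction theory furnished by (a), this promotes to smoothness of the map of deformation functors, proving (3).

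The main obstacle is the passage from the tangent-level surjectivities (a) and (b) to honest smoothness of the nested Hilbert scheme and of the map of deformation functors. This requires both the pushout/obstruction-theoretic framework of \Cref{ssec:defFunct} and a careful recursive argument verifying that the subquotients of the broken Gorenstein structure inherit both smoothability and their own broken Gorenstein structures, so that the dimension count for $T\Def_{R \onto R_0}$ can be matched against the smoothable component of $\Hilb^{d,d_0}(\AA^3)$ consistently along the filtration.
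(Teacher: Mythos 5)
Your proposal follows essentially the same route as the paper. The inductive bound $\delta_R \leq \delta_{R/\Ann(\cK)} \leq 0$ via \Cref{ref:canitbetrue:thm}, \Cref{ref:brokenGorensteinCanonical:cor} and \Cref{ref:transposition:lem} is exactly the paper's argument (your explicit split into the cyclic and cocyclic cases is just the unwinding of the paper's one-line invocation of those results), part (1) is proved identically by comparing the tangent space with the $3d$-dimensional smoothable component, and part (3) is obtained, as in the paper, from the surjectivity of the tangent map $T\Def_{\cK\subseteq R}\to T\Def_{\cK}$ given by conclusion (2) of \Cref{ref:canitbetrue:thm} together with the standard smoothness criterion. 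One caveat on (3): what that criterion requires besides tangent surjectivity is the formal smoothness of the \emph{source} functor $\Def_{\cK\subseteq R}\simeq \Def_{R\onto R_0}$, i.e., statement (2) itself, not merely ``an obstruction theory furnished by (a)''; an obstruction theory alone does not let you lift deformations.

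The only substantive deviation, and the one genuine gap, is in part (2). The paper deduces smoothness of the nested Hilbert scheme at $[R\onto R_0]$ from the pushout condition (conclusion (1) of \Cref{ref:canitbetrue:thm}) by citing \cite[Corollary~A.4]{jelisiejew_Mandziuk}. You instead assert a dimension count matching $T\Def_{R\onto R_0}$ (which indeed has dimension $3d$ under the equality case) against ``the smoothable component'' of $\Hilb^{d,d_0}(\AA^3)$. This only yields smoothness if you also know that $[R\onto R_0]$ lies on a component of dimension $3d$, i.e., that it lies in the closure of the locus of pairs of tuples of points. The separate smoothability of $R$ and of $R_0$ (the latter being automatic for codimension-three Gorenstein quotients) does not give simultaneous smoothability of the pair; establishing that — equivalently, the unobstructedness of the flag functor — is precisely the content of the cited corollary, which uses the pushout condition together with the smoothness of $[R]$ and $[R_0]$. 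As written, your sketch presupposes the conclusion at this step.
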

            \begin{proof}
             By assumption,  $\cK$ is either cyclic or cocyclic.
If $\cK$ is cyclic, then   $\cK \simeq A = R/\Ann(\cK)$, thus, 
              $\Sym^2_R \cK \simeq \Sym^2_{A} A\simeq A$,
              so $\degold(\Sym^2_R \cK) = \degold \cK$.
If $\cK$ is cocyclic, applying
                \Cref{ref:brokenGorensteinCanonical:cor} to
                $R/\Ann(\cK)$, we obtain that $\degold(\Sym^2_R \cK) \leq \degold \cK$.
                Thus, \Cref{ref:canitbetrue:thm} and \Cref{ref:transposition:lem} imply that $\delta_R
                \leq \delta_{\cK}= \delta_{R/\Ann(\cK)}$. 
                By assumption, the algebra
                $R/\Ann(\cK)$ is also a quotient of $S$ and admits a broken Gorenstein
                structure with a smaller number of steps. Therefore, by induction, we
                have $\delta_{R/\Ann(\cK)}\leq 0$, which implies $\delta_{R}\leq 0$. Consequently, the dimension of the
                tangent space to $[R]$ is at most $3\degold R$.

Assume $R$ is smoothable. Since the tangent space at $[R]$ has dimension at most $3\degold R$, the point $[R]\in \Hilb(\AA^3)$ is smooth. Moreover, in
                this case, we have $\delta_R = 0$, while
                $\delta_{\cK} \leq 0$ and $\degold(\Sym^2_R \cK) \leq \degold
                \cK$. In particular, equality holds in~\Cref{eq:tangent_excessChange}. By
                \Cref{ref:canitbetrue:thm} \eqref{it:canitbetrue1} and \cite[Theorem~A.2]{jelisiejew_Mandziuk}, the nested Hilbert
                scheme is smooth at $[R\onto R_0]$. This implies
                that the abstract deformation functor $\Def_{R\onto R_0}$ is
                (formally) smooth. This functor is
                isomorphic to $\Def_{\cK\subseteq R}$, see
                \Cref{ssec:defFunct}. The forgetful functor
                $\pi\colon \Def_{\cK\subseteq
                R}\to \Def_{\cK}$ induces a map on tangent spaces
                $d\pi\colon T{\Def_{\cK\subseteq R}}\to T{\Def_{\cK}}$,
                where $T{\Def_{\cK}} =
                \Ext^1_S(\cK, \cK)$ and
                \[
                    T{\Def_{\cK \subseteq R}} = \left\{ (e_R, e_{\cK})\in
                \Ext^1_S(R, R) \oplus \Ext^1_S(\cK, \cK)\ |\ c'(e_R) =
                d(e_{\cK}) \right\},
                \]
                see again~\Cref{ssec:defFunct}.
                By \Cref{ref:canitbetrue:thm} \eqref{it:canitbetrue2}, the map $d\pi$ is
                surjective. Finally, the ``standard criterion for
                smoothness'' ~\cite[Lemma~6.1]{Fantechi_Manetti}, implies that the map $\pi$
                is (formally) smooth.
            \end{proof}

    \subsection{Broken Gorenstein algebras without flips are
    licci}\label{sec:licci}

The main result of this section is that broken Gorenstein algebras without
flips are licci. 
This supports \Cref{main_conjecture}.
The key point is a linkage lemma proven by Huneke-Polini-Ulrich~\cite{HPU}, which refines~\cite{Watanabe__A_note}. 
For the next two results, fix a polynomial ring $S = \kk[x,y,z]$.

\begin{prop}[Huneke, Polini, Ulrich {\cite{HPU}}]\label{PropSameEvenLinkageClass}
Let $I\subseteq S$ be a cofinite ideal and $f \in S$ be such that $I+(f)$ is Gorenstein.
Then, $I$ and $(I:f)$ are in the same (even) linkage class.
\end{prop}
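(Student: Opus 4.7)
The plan is to exhibit an explicit double link $I \sim L \sim (I:f)$, placing $I$ and $(I:f)$ in the same even linkage class. The key input is the short exact sequence
\[ 0 \to S/(I:f) \xrightarrow{\cdot f} S/I \to S/J \to 0, \quad J := I+(f), \]
which, upon applying $\Ext^3_S(-, S)$, dualizes to $0 \to \omega_{S/J} \to \omega_{S/I} \to \omega_{S/(I:f)} \to 0$. The Gorenstein hypothesis on $J$ renders $\omega_{S/J} \simeq S/J$ cyclic, and this cyclicity is the central lever.

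First, I will pick a generic regular sequence $\ua = (\alpha_1, \alpha_2, \alpha_3) \subseteq I$, which exists by prime avoidance, and set $L := (\ua:I)$. \Cref{ref:mainlinkage:lem} supplies the first link $I \sim L$ via $\ua$ together with $L/\ua \simeq \omega_{S/I}$. Using the SES of canonical modules above, I will extract an element $g \in L$ whose residue in $L/\ua$ generates the image of $\omega_{S/J}$. Such a $g$ satisfies $(\ua:g) = J$, hence $Ig \subseteq \ua$ and $fg \in \ua$. Cyclicity then yields $(\ua:J) = \ua + gS$, and the surjectivity (via multiplication by $f$) gives $(\ua:(I:f)) = fL + \ua$. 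Combining these, a short diagram chase produces the key colon identity
\[ (I:f) = \bigl((\ua + gS) : L\bigr), \]
verified by the equivalences $s \in (I:f) \iff sfL \subseteq \ua \iff sL \subseteq (\ua:f) \cap L = (\ua:J) = \ua + gS$, using $I = (\ua:L)$ by double linkage.

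For the second link, I will produce a three-generator regular sequence $\ub \subseteq \ua + gS$ (contained in $L \cap (I:f)$) for which $(\ub:L) = ((\ua + gS):L) = (I:f)$. A natural candidate is $\ub = (\alpha_1, \alpha_2, \gamma \alpha_3 + \delta g)$ for generic $\gamma, \delta \in S$, whose regularity is secured by a standard prime-avoidance argument after a sufficiently generic choice of $\ua$. Invoking \Cref{ref:mainlinkage:lem} a second time yields the link $L \sim (I:f)$ via $\ub$, completing the double-linkage chain $I \sim L \sim (I:f)$ and proving the proposition.

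The main obstacle lies in verifying that the three-generator $\ub$ above realizes the desired colon $((\ua + gS):L)$. Reducing modulo $(\alpha_1, \alpha_2)$ in $\bar S := S/(\alpha_1, \alpha_2)$, the problem becomes whether the submodule $(I:f)\cdot \bar L$ inside the two-generated ideal $(\bar\alpha_3, \bar g)\bar S$ can be confined to a single principal sub-ideal $(\gamma \bar\alpha_3 + \delta \bar g)$ for generic $\gamma, \delta$. Establishing this should follow by a Bertini-style genericity argument, but one may instead exploit the Buchsbaum--Eisenbud/Pfaffian structure of the Gorenstein ideal $J = I + (f)$ to determine $g$ explicitly and verify the second link directly.
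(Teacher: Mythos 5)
The paper offers no proof of this proposition; it is imported directly from \cite{HPU}, so your argument has to stand on its own. Its first two thirds do: the exact sequence $0 \to S/(I:f) \xrightarrow{\cdot f} S/I \to S/J \to 0$ and its dual, the identities $(\ua : J) = (\ua) + gS$ with $(\ua : g) = J$, the computation $(\ua : (I:f)) = fL + (\ua)$, and the resulting colon identity $(I:f) = \bigl(((\ua) + gS) : L\bigr)$ are all correct, and this is a genuinely useful reduction.

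The proof fails at exactly the step you flag as "the main obstacle", and not because a genericity argument is missing: the constraint $\ub \subseteq (\ua) + gS$ makes the second link \emph{impossible} whenever $J$ is not a complete intersection. Indeed, suppose $\ub \subseteq (\ua)+gS$ is a regular sequence with $((\ub):L) = (I:f)$. By \Cref{ref:mainlinkage:lem} applied to $\ub \subseteq L$,
\[
\dim_\kk S/(\ub) \;=\; \dim_\kk S/L + \dim_\kk S/(I:f) \;=\; \bigl(\dim_\kk S/(\ua) - \dim_\kk S/I\bigr) + \bigl(\dim_\kk S/I - \dim_\kk S/J\bigr) \;=\; \dim_\kk S/\bigl((\ua):J\bigr),
\]
so the inclusion $(\ub) \subseteq (\ua)+gS = ((\ua):J)$ is an equality of ideals of the same finite colength, i.e.\ $((\ua):J)$ would be a complete intersection. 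But then $J = \bigl((\ua) : (\ub)\bigr) = (\ua) + (\det M)$ (where $\alpha_i = \sum_j m_{ij}\beta_j$) would be an almost complete intersection, hence by Kunz's theorem a complete intersection. Concretely: take $I = (x^2-yz,\, xz,\, y^2,\, z^2)$ and $f = xy$, so that $J = \Ann(X^2+YZ)$ is Gorenstein with five minimal generators and $(I:f) = \mm$. With $\ua = (x^2-yz, y^2, z^2)$ one finds $L = (x,z,y^2)$, $g = x$, and $(\ua)+gS = (x) + (y,z)^2$, which has colength $3$ and four minimal generators, hence contains no complete intersection of colength $3$. The double link does exist here, but via $\ub = (x,z,y^3)$, which is \emph{not} contained in $(\ua)+gS$. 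So the missing content — how to manufacture the second regular sequence, which must satisfy $(I:f)\cdot L \subseteq (\ub) \subseteq L$ with the correct colength but cannot be confined to $(\ua)+gS$ — is precisely the substance of the Huneke--Polini--Ulrich lemma, and neither the Bertini remark nor the appeal to the Pfaffian structure of $J$ supplies it.
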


\begin{thm}\label{ref:NoFlipsLicci:thm}
If $R = S/I$ is a finite algebra with a broken Gorenstein structure without flips, then the
ideal $I$ is licci.
\end{thm}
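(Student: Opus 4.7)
The plan is to proceed by induction on the number $k$ of steps in the broken Gorenstein structure without flips on $R = S/I$.

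The base case $k = 0$ is classical: then $R$ is a codimension three Gorenstein algebra, and every such ideal is licci by Watanabe's theorem.

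For the inductive step, suppose the result holds for all algebras carrying a $(k-1)$-broken Gorenstein structure without flips, and let $R = S/I$ admit one of length $k \geq 1$. By \Cref{def:brokenGorenstein}, the first step gives an exact sequence
$$0 \to \cK \to R \to R_0 \to 0,$$
with $\cK = \alpha_1 R$ principal and $R_0$ Gorenstein, together with an inherited $(k-1)$-broken Gorenstein structure without flips on $R/\Ann_R(\cK)$. Choose any lift $f \in S$ of $\alpha_1$; then $R_0 \simeq S/(I + (f))$, so $I + (f)$ is a codimension three Gorenstein ideal of $S$. Applying \Cref{PropSameEvenLinkageClass}, $I$ and the colon ideal $(I : f)$ lie in the same (even) linkage class.

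The key identification is the following. Multiplication by $f$ gives an isomorphism $R/\Ann_R(\cK) \simeq \cK$ of $S$-modules, and a direct calculation shows $\Ann_R(\cK) = (I:f)/I$, hence $R/\Ann_R(\cK) \simeq S/(I:f)$. The inherited $(k-1)$-broken Gorenstein structure without flips on $R/\Ann_R(\cK)$ thus descends to $S/(I:f)$. By the inductive hypothesis, $(I:f)$ is licci, and since being licci is preserved within a linkage class, $I$ is licci as well.

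The only point requiring attention is that every algebra produced by the recursion remains a finite quotient of $S = \kk[x,y,z]$, so the codimension three hypothesis needed for the linkage framework of \Cref{ssec:linkage} and for \Cref{PropSameEvenLinkageClass} is automatic; no genuine obstacle lies beyond this bookkeeping. The substantive content is concentrated in \Cref{PropSameEvenLinkageClass}, which lets us replace $I$ by $(I:f)$ while staying inside a single linkage class, combined with the observation that colon ideals against $f$ exactly strip off one layer of the no-flip structure.
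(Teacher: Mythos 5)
Your proof is correct and follows essentially the same route as the paper: induct on the number of breaks, identify $\cK \simeq S/(I:f)$ via the cyclic generator, and apply \Cref{PropSameEvenLinkageClass} to place $I$ and $(I:f)$ in the same even linkage class. The only difference is that you spell out the base case (Watanabe's theorem for codimension three Gorenstein ideals) and the identification $\Ann_R(\cK) = (I:f)/I$, which the paper leaves implicit.
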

\begin{proof}
    By definition, we have an exact sequence of $S$-modules $0\to \cK\to R\to
    R_0\to 0$ with $R_0$ Gorenstein and $\cK  \simeq Rf$ cyclic. The map $S\to R$ sending $1$ to $f$ has kernel $(I:f)$, so $\cK
    \simeq S/(I:f)$.
    By \Cref{PropSameEvenLinkageClass}, the ideal $I$ defining $R$ is evenly linked to the ideal
    $(I:f)$ defining $\cK$. Proceeding by induction on the number of steps in the structure, $I$ is evenly linked to a Gorenstein ideal in $S$, the latter of which is known to be licci  \cite[proof of the Theorem]{Watanabe__A_note}.
\end{proof}

\begin{remark}
 The implication ``smooth
    $\implies$ licci'' in \Cref{main_conjecture} is  very particular to
    codimension $3$. The following is a counterexample in codimension four: the ideal defining
    $I = (x^2,xy,y^2) + (z^2,zw,w^2) \subseteq \kk[x,y,z,w]$ is not licci
    \cite[Theorem 2.6]{Huneke_Ulrich__Monomials}, but it is a smooth point of $\Hilb^9(\AA^4)$.  
\end{remark}

\subsection{Structure theorem in the case without flips}

In this section, we prove  \Cref{theorem_pfaffian_structure} and \Cref{Prop_pfaffian_converse}.

\begin{proof}[Proof of \Cref{theorem_pfaffian_structure}] Recall that $S$ is a regular local ring and let $R = S/I$.
    Since the
    broken Gorenstein structure has no flips, \Cref{eq:noflipsFlag} implies
    that there are elements $\alpha_1, \ldots ,\alpha_k\in S$ and a chain of principal ideals 
\[
    0 = I_{k+1}  \subsetneq I_{k} \subsetneq I_{k-1} \subsetneq \cdots \subsetneq I_1 \subseteq I_0 = R
\]
such that (putting $\alpha_0 = 1$, $\alpha_{k+1} = 0$) for every
$i=0,1, \ldots ,k$ we have $I_i = R\alpha_0\alpha_1\cdots\alpha_i$ and $I_{i}/I_{i+1}$
is isomorphic to a Gorenstein quotient of $R$.

Fix an $0\leq i\leq k$ and write $I_{i}/I_{i+1} \simeq
S/K_i$, then $S/K_i$ is Gorenstein of codimension three by assumptions, so,
by the 
Buchsbaum-Eisenbud theorem \cite{BuchsbaumEisenbudCodimThree}, there exist a skew-symmetric matrix $A_i$ and a minimal free resolution
\begin{align} \label{BE_i}
0 \to S \xrightarrow{\text{Pf}(A_i)^{T}} S^{n_i} \xrightarrow{A_i} S^{n_i} \xrightarrow{\text{Pf}(A_i)} S \to S/K_i \to 0.
\end{align}
By abuse of notation, we also use $\text{Pf}(A_i)$ to denote the $1 \times n_i$ row vector whose entries generate the corresponding pfaffian ideal (as defined in \Cref{broken_intro}). Consider the map $S\to I_i$ that sends $1$ to $\alpha_0\alpha_1 \cdots \alpha_i$.
It is surjective, since $I_i$ is generated by $\alpha_0\alpha_1 \cdots \alpha_i$. 
Let $L_i$ be
its kernel, so that $I_i  \simeq S/L_i$. Dividing $S/L_i$ by
$\alpha_{i+1}$ corresponds to dividing $I_i$ by the product $\alpha_0\alpha_1 \ldots
\alpha_{i+1}$ and so
\begin{equation}\label{eq:decomposition}
    K_i = L_i + (\alpha_{i+1}).
\end{equation}
Note that $\alpha_{i+1}$ is a minimal generator for $0 \leq i \leq k-1$. Indeed, if $\alpha_{i+1}$ is not a minimal generator of $K_i$, it follows from
Nakayama's lemma that $K_i = L_i$. Thus, $I_{i}/I_{i+1} \simeq S/K_i = S/L_i \simeq I_i$, i.e., $I_{i+1} = 0$, which is a contradiction for $i\leq k-1$.

Assume $0 \leq i \leq k-1$. Since $\alpha_{i+1}$ is a minimal generator of $K_i$, we can find an
invertible matrix $g\in M_{n_i\times n_i}(S)$ such that the first element
of the vector $g\Pf(A_i)$ is equal to $\alpha_{i+1}$. Replacing~\eqref{BE_i} by the resolution
with maps $g^{\intercal}\Pf(A_i)^{\intercal}$, $g^{-1}A_i
(g^{\intercal})^{-1}$, $\Pf(A_i)g$, we obtain another self-dual resolution and additionally
we get that $\alpha_{i+1} = \Pf(A_i)_1$. By~\eqref{eq:decomposition}, every
other Pfaffian of $A_i$ is a sum of a multiple of $\alpha_{i+1}$ and an
element of $L_i$.  By acting with another invertible matrix we can guarantee that the remaining Pfaffians lie in $L_i$, that is, that $\Pf(A_i)_{\geq 2}
\subseteq L_i$. Indeed, if we write the ideal $\Pf(A_i) = (p_1,\dots,p_{n_i})$ then $p_j = f_j\alpha_{i+1} + \ell_j$ for $\ell_j \in L_i$. Let $E_j$ be the upper triangular matrix with $1$s along the diagonal and $-f_j$ in the $(1,j)$-th entry. The desired invertible matrix is $E_2\circ \cdots \circ E_{n_i}$. By construction of $L_i$, this implies that
\begin{equation}\label{eq:otherElements}
    \alpha_0\alpha_1 \cdots \alpha_i \Pf(A_i)_{\geq 2} \subseteq I.
\end{equation}
Note that for $i=k$, we in fact have $\alpha_0\alpha_1 \cdots \alpha_k \Pf(A_k)_{\geq 1} \subseteq I$ since $K_i= L_i$.

Consider a surjection $S^{\oplus k+1}\to R$ given by
\[
    [\alpha_{k}\cdots \alpha_{0},\ \alpha_{k-1}\cdots \alpha_{0},\ \dots,\ \alpha_1\alpha_0,\ \alpha_0].
\]
Let $N = \sum_{i=0}^k n_i$ be the sum of sizes of $A_0, \ldots ,A_k$.
Let $C_{i}$ denote the vector $[-1, 0, 0, \ldots, 0]$ of length $n_i$, that
is, of size equal to the size of $A_i$.
Consider the map $S^{\oplus N} \to S^{\oplus k+1}$ given by the matrix
\[
M = 
\begin{pmatrix}
\Pf(A_{k}) & C_{k-1} & 0 &  \ldots & 0\\
0 & \Pf(A_{k-1}) & C_{k-2} &  & \vdots\\
\vdots &&  \ddots &&0\\
0&\cdots&0& \Pf(A_{1}) & C_{0}\\
0& \cdots &0&0& \Pf(A_0)
\end{pmatrix}
\]
Thanks to~\eqref{eq:otherElements}, we obtain a complex
            \begin{equation}\label{eq:complex}
                \begin{tikzcd}
                    0 & \ar[l] R & \ar[l, ] S^{\oplus k+1} & \ar[l, "M"']
                    S^{\oplus N}.
                \end{tikzcd}
            \end{equation}
            We will prove that it is exact.
        Consider the Rees-like $R[t]$-module $\mathcal{R} := I_kt^{-k} \oplus I_{k-1}t^{-(k-1)}\oplus \ldots
        \oplus I_1t^{-1} \oplus R \oplus Rt \oplus Rt^{2}\oplus  \ldots
        \subseteq R[t^{\pm 1}]$  associated to the
        filtration on $R$. Since $\mathcal{R}$ is a torsion-free $\kk[t]$-module, it is flat.
        The complex above generalizes to
            \begin{equation}\label{eq:complexExtended}
                \begin{tikzcd}
                    0 & \ar[l] \mathcal{R} & \ar[l] S[t]^{\oplus k+1} & \ar[l,
                    "M(t)"']
                    S[t]^{\oplus N}
                \end{tikzcd}
            \end{equation}
            where
            \[
                M(t) = 
\begin{pmatrix}
\Pf(A_{k}) & tC_{k-1} & 0 &  \ldots & 0\\
0 & \Pf(A_{k-1}) & tC_{k-2} &  & \vdots\\
\vdots &&  \ddots &&0\\
0&\cdots&0& \Pf(A_{1}) & tC_{0}\\
0& \cdots &0&0& \Pf(A_0)
\end{pmatrix}
            \]
            and the surjection is $ [\alpha_{k}\cdots \alpha_{0}t^{-k},\ \alpha_{k-1}\cdots \alpha_{0}t^{-(k-1)},\ \dots,\ \alpha_1\alpha_0t^{-1},\ \alpha_0]$.
            
            Let $\mathcal{I} = \ker(S[t]^{\oplus k+1}\to \mathcal{R})$. Since the
            $\kk[t]$-module $\mathcal{R}$ is flat, for every $\lambda\in \kk$
            the module $\mathcal{I}/(t-\lambda)\mathcal{I}$ is the kernel of
            $S[t]/(t-\lambda)^{\oplus k+1} \to
            \mathcal{R}/(t-\lambda)\mathcal{R}$. In particular, the homology
            in the middle of the complex (\ref{eq:complexExtended}) commutes
            with base change of $\kk[t]$.
            Moreover, the complex (\ref{eq:complexExtended}) is $\Z$-graded and becomes exact after dividing by $(t)$ since $\alpha_{i+1}\alpha_i \alpha_{i-1}\cdots \alpha_0 t^{-i} = (\alpha_{i+1}\alpha_i\cdots \alpha_0t^{-(i+1)})t \in t\cdot I_{i+1}t^{-(i+1)}$.
            Consider the homomorphism of $\kk[t]$-modules $S[t]^{\oplus N}\to \mathcal{I}$. Since this map stays surjective after
            dividing by $t$, the map must be surjective.
            Applying Nakayama's lemma we conclude that the complex (\ref{eq:complexExtended}) is exact.
            Since its homology in the middle commutes with base change, it
            stays exact after dividing by $(t-1)$. After this division we
            obtain the complex (\ref{eq:complex}), in particular, this complex is also exact.
            It is straightforward to read off the generators of $I$ from $M$.
            \end{proof}
            
            \begin{proof}[Proof of \Cref{Prop_pfaffian_converse}]
Given such a collection of skew-symmetric matrices $A_0, \ldots, A_k$, we can define $\alpha_{i+1} = \Pf(A_i)_1$, $I$ as in \Cref{eq:brokenExplicitGens}, $R = S/I$, and $I_i = R\alpha_0\alpha_1\cdots\alpha_{i}$ with $\alpha_0 =1$ and $\alpha_{k+1}=0$. This gives us a chain of principal ideals $ 0 = I_{k+1}  \subsetneq I_{k} \subsetneq I_{k-1} \subsetneq \cdots \subsetneq I_1 \subseteq I_0 = R$. It remains to show that $I_i/I_{i+1} $ is Gorenstein.
           Observe that,
          $$\frac{I_i}{I_{i+1}} 
          = \frac{\sum_{j=0}^{i-1} \alpha_0\cdots\alpha_j\Pf(A_j)_{\geq 2} + (\alpha_0\cdots\alpha_{i})}
          		{\sum_{j=0}^{i} \alpha_0\cdots\alpha_j\Pf(A_j)_{\geq 2} + (\alpha_0\cdots\alpha_{i+1})}
          = \frac{(\alpha_0\cdots\alpha_{i})}{\alpha_0\cdots\alpha_{i}\Pf(A_{i})_{\geq 2}+ (\alpha_0\cdots\alpha_{i+1})}
          $$
          with the last equality following from \Cref{pfaffian_condition}.
This simplifies to 
$$
\frac{(\alpha_0\cdots\alpha_{i})}{\alpha_0\cdots\alpha_{i}\Pf(A_{i})_{\geq 2}+ (\alpha_0\cdots\alpha_{i+1})}
\cong \frac{S}{\Pf(A_{i})_{\geq 2}+ \alpha_{i+1}} = \frac{S}{\Pf(A_{i})},
$$
as required.
\end{proof}

\section{Smooth monomial points} \label{sec:mon1}
\setlength{\epigraphwidth}{0.4\textwidth}

In this section, we develop an explicit criterion for determining the
smoothness of a monomial point in $\Hilb^d(\AA^3)$, see \Cref{ThmSmoothMonomialClassification}. 
We also prove that singular monomial points have smoothable tangent excess greater than or equal to 6, see  	\Cref{ThmSingularMonomial}.
In particular, we prove \Cref{main_conjecture} and \Cref{conj:hu1}
for monomial ideals.

Throughout this section, the polynomial ring $S$ is always  $S=\kk[x,y,z]$.
We denote elements of $\Z^3$ by bold-face letters such as $\bfa, \bfb$
and sometimes endow them with superscripts to enumerate them, for example, $\bfc^1,\bfc^2$ etc.
Given an element $\bfa \in \Z^3$, we use $\bfa_i$ to denote the $i$-th component of the vector $\bfa$, that is, $\bfa = (\bfa_1, \bfa_2, \bfa_3)\in\Z^3$.
By an abuse of notation, 
we identify a monomial $x^{\bfa_1}y^{\bfa_2}z^{\bfa_3}$
with its exponent vector $\bfa = (\bfa_1, \bfa_2, \bfa_3)$.
We identify a monomial ideal $I \subseteq S$ with the set of exponents vectors $I\subseteq \N^3$, 
the monomials of $S/I$ with the staircase $E_I := \N^3 \setminus I$.
Throughout this section, we denote by $\bfa \leq \bfb$ the partial order in $\Z^3$ (or $\Z^2$) given by componentwise inequality, equivalently, by divisibility of the corresponding monomials.

\subsection{Monomial points with no singularizing triples} 
We begin by describing the structure of monomial ideals without singularizing triples.

\begin{prop} \label{PropNoSingularizingTriple}
Let $I\subseteq S = \Bbbk[x,y,z]$ be a cofinite monomial ideal.
The following conditions are equivalent:
\begin{enumerate}[label=(\roman*)]
\item $I$ admits no singularizing triple.
\item For every subset $\mathcal{E}\subseteq \soc(S/I)$,
    there exists $\bfs \in \mathcal{E}$ and two indices $i, j\in \{1,2,3\}$ such that
$$
\bfs_i = \max(\bft_i\ |\ \bft\in \mathcal{E}),\ \ \mbox{and}\ \
\bfs_j = \max(\bft_j\ |\ \bft\in \mathcal{E})
$$
We also have $\bfs_k < \bft_k$ for all $\bft \in \mathcal{E}\setminus\{\bfs\}$,
where $\{i,j,k\}=\{1,2,3\}$.
\item There exists an ordering of the socle monomials
\begin{equation}\label{EqSocleOrdering}
\soc(S/I) = \big\{ \bfs^1, \bfs^2, \ldots, \bfs^\tau\big\}
\end{equation}
such that, for every $p$, the  monomial $\bfs^p$ dominates the subsequent monomials in two components $i_p, j_p \in \{1,2,3\}$ depending on $p$:
\begin{equation}\label{EqSocleDomination}
\bfs^p_{i_p} \geq \bfs^q_{i_p},
\,
\bfs^p_{j_p} \geq \bfs^q_{i_p}
\quad
\forall\, q >p.
\end{equation}
Moreover, we also have $\bfs^p_{k_p} < \bfs^q_{k_q}$ for all $q>p$,
where $\{i_p,j_p,k_p\}=\{1,2,3\}$.
\end{enumerate}
\end{prop}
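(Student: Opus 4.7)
My plan is to establish the cyclic chain (ii)~$\Rightarrow$~(iii)~$\Rightarrow$~(i)~$\Rightarrow$~(ii), treating (ii) as the basic combinatorial statement and (iii) as its greedy iteration. For (ii)~$\Rightarrow$~(iii), I would build the ordering inductively: set $\mathcal{E}_1 := \soc(S/I)$, apply (ii) to obtain $\bfs^1$ with dominated coordinates $i_1, j_1$, and recurse on $\mathcal{E}_{p+1} := \mathcal{E}_p \setminus \{\bfs^p\}$. Since every $\bfs^q$ with $q > p$ lies in $\mathcal{E}_p$, the two-coordinate domination condition (ii) applied to $\mathcal{E}_p$ yields exactly \eqref{EqSocleDomination} along with the strict inequality in the remaining coordinate. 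The converse (iii)~$\Rightarrow$~(ii) is immediate: for any $\mathcal{E} \subseteq \soc(S/I)$, take $\bfs$ to be the element of $\mathcal{E}$ with smallest index in the fixed ordering.

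For (iii)~$\Rightarrow$~(i), I argue by contradiction. Suppose $\{\bfa, \bfb, \bfc\}$ is a singularizing triple and, without loss of generality, $\bfa$ is the earliest of the three in the ordering. Then $\bfa$ must dominate both $\bfb$ and $\bfc$ in a common pair of coordinates $\{i_p, j_p\}$. However, each of the three possible pairs is ruled out by one of the defining strict inequalities of the triple: $\{1,2\}$ by $\bfb_2 > \bfa_2$, $\{1,3\}$ by $\bfc_3 > \bfa_3$, and $\{2,3\}$ by both. Symmetric arguments handle the cases where $\bfb$ or $\bfc$ is first.

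For (i)~$\Rightarrow$~(ii) I argue by contrapositive: suppose some $\mathcal{E} \subseteq \soc(S/I)$ contains no element attaining the maximum in two coordinates. Define $M_i := \bigl\{ \bft \in \mathcal{E} : \bft_i = \max_{\bfu \in \mathcal{E}} \bfu_i \bigr\}$; each $M_i$ is non-empty and, by hypothesis, the sets $M_1, M_2, M_3$ are pairwise disjoint. Choosing $\bfa \in M_1$, $\bfb \in M_2$, $\bfc \in M_3$ produces three distinct socle monomials satisfying all the strict inequalities of a singularizing triple. The only point that needs care---the secondary strict inequality $\bfs_k < \bft_k$ in (ii)---is automatic from the incomparability of distinct socle monomials: if $\bfs_i \geq \bft_i$ and $\bfs_j \geq \bft_j$, then $\bfs_k \geq \bft_k$ would force $\bft \leq \bfs$ componentwise, hence $\bfs = \bft$. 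The main (and only) obstacle in the proof is simply recognizing that this strict-inequality clause in (ii) is a shadow of the maximality of socle elements in the staircase; once that is untangled, each of the three implications reduces to a short bookkeeping argument.
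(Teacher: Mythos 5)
Your proof is correct and uses essentially the same ingredients as the paper's: the greedy extraction of an element attaining two coordinate maxima, the observation that three distinct maximizers would form a singularizing triple, and incomparability of socle monomials to upgrade the remaining inequality to a strict one. The only difference is that you traverse the cycle of implications in the opposite direction ((ii)$\Rightarrow$(iii)$\Rightarrow$(i)$\Rightarrow$(ii) versus the paper's (i)$\Rightarrow$(iii)$\Rightarrow$(ii)$\Rightarrow$(i)), which is immaterial.
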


\begin{proof} We prove (i) $\Rightarrow$ (iii) $\Rightarrow$ (ii) $\Rightarrow$ (i). 
\begin{enumerate}[itemindent=15pt]
\item[(i) $\Rightarrow$ (iii)] For each $i \in \{1,2,3\}$ consider $m^1_i =  \max\{\bfa_i \,\mid \, \bfa \in \soc(S/I)\}$. Since $I$ has no singularizing triples, there exists $\bfs^1 \in \soc(S/I)$ such that $\bfs^1_i = m^1_i$ for at least two $i\in \{1,2,3\}$. If this was not the case, then  the three monomials attaining the maxima $m^1_1, m^1_2, m^1_3$ would form a singularizing triple. To construct the next element, we consider $m^2_i =  \max\{\bfa_i \,\mid \, \bfa \in \soc(S/I)\setminus\{\bfs^1\}\}$ and similary choose $\bfs^2$ to be the element attaining $m^2_i$ for at least two $i \in \{1,2,3\}$. Repeatedly applying this procedure gives us the ordering in \Cref{EqSocleOrdering} and, by construction, it satisfies \Cref{EqSocleDomination}. The last statement follows from the incomparability of monomials in $\soc(S/I)$ with respect to the partial order given by divisibility.
\item[(iii) $\Rightarrow$ (ii)] This follows immediately by restricting the order to $\mathcal{E}$.
\item[(ii) $\Rightarrow$ (i)] This follows by applying (ii) to every triple $\mathcal{E}=\{\bfa, \bfb, \bfc\} \subseteq \soc(S/I)$. \qedhere
\end{enumerate}
\end{proof}

\begin{remark}\label{RemarkProduceSmooth}
Using \Cref{PropNoSingularizingTriple}, one can effectively  generate all the 
monomial ideals $I$  that admit no singularizing triples, up to a given value of  $\dim_\kk S/I$.
Indeed, 
such monomial ideals are encoded by a sequence 
$\big\{ \bfs^1, \bfs^2, \ldots, \bfs^\tau\big\}$
satisfying 
 condition (iii). 
By the nature of this condition, all such sequences can be generated by a simple recursive procedure in $\tau$.
This observation allows to compute  the generating series $P^{\sm}_3(d)$ introduced in 
\Cref{subsecGenFunc} up to a given finite order.
\end{remark}

\begin{prop} \label{PropSmoothMonomial}
Let $I\subseteq S = \Bbbk[x,y,z]$ be a cofinite monomial ideal.
If $I$ admits no singularizing triple, then $S/I$ has a broken Gorenstein
structure without flips,
in particular,  $[S/I] \in \Hilb^d(\AA^3)$ is a smooth point.
 \end{prop}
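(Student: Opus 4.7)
The strategy is induction on $\tau := |\soc(S/I)|$. The base case $\tau = 1$ is immediate, as $S/I$ is then monomial Gorenstein, which constitutes a $0$-broken Gorenstein structure. For $\tau \geq 2$, my goal is to construct a short exact sequence
\[
    0 \to \alpha R \to R \to R_0 \to 0
\]
with $R_0$ a monomial Gorenstein algebra and $\alpha R$ a cyclic $R$-module, such that the $S$-module $\alpha R \simeq S/(I:\alpha)$ again has no singularizing triple and strictly fewer socle elements. Applying the inductive hypothesis to $S/(I:\alpha)$ and invoking \Cref{def:brokenGorenstein} will then yield a broken Gorenstein structure without flips on $R$; smoothness of $[S/I]$ follows from \Cref{ref:smoothnessOfSmoothableBroken:cor} combined with the smoothability of monomial points~\cite{CEVV}.

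The choice of $\alpha$ is dictated by the ordering from \Cref{PropNoSingularizingTriple}(iii). Let $\bfs^1, \ldots, \bfs^\tau$ be the ordered socle and let $i_1, j_1, k_1$ be the indices associated to $\bfs^1$, so that $\bfs^1$ achieves the componentwise maximum over $\soc(S/I)$ in coordinates $i_1, j_1$, while $\bfs^1_{k_1} < \bfs^p_{k_1}$ for all $p \geq 2$. I would take $\alpha := x_{k_1}^{\bfs^1_{k_1}+1}$, the smallest power of the $k_1$-th variable that divides every $\bfs^p$ with $p \geq 2$ but fails to divide $\bfs^1$.

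The key verification, and the main obstacle, is that $R_0 := S/(I+(\alpha))$ is Gorenstein with $\soc(R_0) = \{\bfs^1\}$. Since the socle elements $\bfs^p$ for $p \geq 2$ vanish in $R_0$ and $\bfs^1$ remains nonzero and annihilated by every variable, the task reduces to ruling out ``accidental'' new socle monomials. For any candidate $\bfm \neq \bfs^1$, the condition $\bfm \notin (\alpha)$ forces $\bfm_{k_1} \leq \bfs^1_{k_1}$, while $x_i \bfm \in I+(\alpha)$ for $i \in \{i_1, j_1\}$ forces $x_i \bfm \in I$ (the only other possibility would require $\bfm_{k_1} \geq \bfs^1_{k_1}+1$). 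Increasing the $k_1$-th coordinate of $\bfm$ until exiting $E_I$ produces a socle element $\bft$ of $S/I$ agreeing with $\bfm$ outside coordinate $k_1$; the dominance inequalities of \Cref{PropNoSingularizingTriple}(iii) then force $\bfm \leq \bfs^1$ componentwise, and combining this with $x_{i_1}\bfm, x_{j_1}\bfm \in I$ and $\bfs^1 \notin I$ (together with the antichain property of $\soc(S/I)$ in case $\bfm$ itself turns out to be a socle element) yields $\bfm = \bfs^1$, a contradiction.

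The remaining step is short. A monomial $\bft$ lies in $\soc(S/(I:\alpha))$ iff $\alpha \bft \in \soc(S/I)$, giving $\soc(S/(I:\alpha)) = \{\bfs^p - \alpha : p \geq 2\}$, a set of size $\tau - 1$. The inherited ordering $\bfs^2 - \alpha, \ldots, \bfs^\tau - \alpha$ satisfies condition (iii) of \Cref{PropNoSingularizingTriple} verbatim, since translation by the constant vector $\alpha$ preserves all componentwise inequalities between the $\bfs^p$. Thus $(I:\alpha)$ admits no singularizing triple, the inductive hypothesis applies, and the argument concludes.
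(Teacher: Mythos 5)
Your proposal is correct and follows essentially the same route as the paper's proof: induction on the socle dimension, splitting off the socle element $\bfs^1$ singled out by \Cref{PropNoSingularizingTriple}, taking exactly the same element $\alpha = f = x_{k_1}^{\bfs^1_{k_1}+1}$, identifying $\soc(S/(I:\alpha))$ with the translated remaining socle monomials, and concluding via \Cref{ref:smoothnessOfSmoothableBroken:cor}. The only local difference is your verification that $S/(I+(\alpha))$ is Gorenstein, which you do by a direct combinatorial exclusion of extra socle monomials, whereas the paper gets it in one line from the duality $\omega_{R/(f)} = \{\varphi \in \omega_R : f\varphi = 0\}$; both are valid.
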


 \begin{proof}
To prove that a broken Gorenstein structure without flips exists, we use induction on the dimension $\tau$ of the socle of $S/I$. 
The case $\tau=1$ is trivial, so assume $\tau >1$.

Order the socle elements $\bfs^1, \ldots ,\bfs^{\tau}$ of $R = S/I$ as in  \Cref{EqSocleOrdering}. 
Let $\bfs^1 = (\bfs^1_1, \bfs^1_2, \bfs^1_3)$ be the first socle element and assume that its coordinates $(-)_1$ and $(-)_2$ dominate the others. 
It follows that $\bfs^1_3 < \bfs^i_3$ for every $i=2,3, \ldots , \tau$.

Let $f := z^{\bfs^1_3 + 1}$.
The canonical module $\omega_{R/(f)}\subseteq \omega_{R}$ consists of elements annihilated by $f$, so the only socle element of $R/(f)$ is $\bfs^1$ and $R/(f)$ is  Gorenstein.

The principal ideal $Rf$ is isomorphic, as $R$-module,  to the algebra $S/(I:f)$.
The monomial basis of $Rf$ can be identified with the monomials in $R$  divisible by $f$. This shows that socle elements of $S/(I : f)$ are $f^{-1}\bfs^2, \ldots , f^{-1}\bfs^{\tau}$. By the criterion \Cref{PropNoSingularizingTriple}, also $(I:f)$ admits no singularizing triple. By induction there is a broken Gorenstein structure without flips on $S/(I:f) \simeq Rf$. 
Merging it with $0\to Rf \to R\to R/Rf\to 0$, we obtain a broken Gorenstein structure without flips on $S/I$. 
The smoothness of $S/I$ follows from \Cref{ref:smoothnessOfSmoothableBroken:cor}.
 \end{proof}

\subsection{Monomial points with singularizing triples}

In this subsection, we
show that the monomial points with singularizing triples are singular points on the Hilbert scheme. 

We begin by recalling a criterion for smoothness from~\cite{RS22} involving the weights of the tangent vectors.

\begin{definition}\label{DefinitionSubspaces}
A \textbf{signature} is a non-constant triple on the two-element set  $\{\text{p},\text{n}\}$, where
$\text{p}$ stands for ``positive or 0'' while $\text{n}$ stands for ``negative''. 
Let $\mathfrak{S}=\{\ppn,\pnp,\npp,\nnp,\npn, \pnn\}$ denote the set of signatures, and 
for each $\mathfrak{s} \in \mathfrak{S}$ define
$
\Z^3_{\mathfrak{s}} 
	= \big\{\bfa  \in \Z^3: \bfa_i \geq 0 \, \text{ if } \, \mathfrak{s}_i = \p,  \,\text{and }  \bfa_i < 0 \,\, \text{ if } \, \mathfrak{s}_i = \n \big\}.
$
\end{definition}

Given a monomial ideal $I \subseteq S$, we define the subspaces
$$
T_\mathfrak{s}(I) =  \bigoplus_{\bfa \in \Z^3_{\mathfrak{s}}} T(I)_{\bfa} \subseteq T(I)
$$
where $ T(I)_{\bfa}$ denotes the graded component of $T(I)$ of degree $\bfa\in\Z^3$. 
It can be shown that $T_{\text{ppp}}(I)=T_{\text{nnn}}(I)=0$, and therefore $T(I) = \bigoplus_{\mathfrak{s} \in \mathfrak{S}} T_{\mathfrak{s}}(I)$ \cite[Proposition 1.9]{RS22}.

\begin{definition}\label{ref:doublyNegative:def} Let $I \subseteq S$ be a monomial ideal. We say that a vector $\bfa \in \Z^3$ is \textbf{doubly-negative} if $\bfa \in \Z^3_{\nnp} \cup \Z^3_{\npn} \cup \Z^3_{\pnn}$. Similarly, a non-zero tangent vector $\varphi \in T(I)$ is said to be \textbf{doubly-negative} if $\varphi \in T_{\nnp}(I) \cup T_{\npn}(I) \cup T_{\pnn}(I)$.
\end{definition}

We now state the two results from \cite{RS22} that we will need.
\begin{prop}[{\cite[Theorem 2.4]{RS22}}] \label{theorem_smooth_monomials} 
Let $I \subseteq S$ be a monomial ideal such that $\dim_\kk(S/I) = d$.
Then,
$$
\dim_{\kk} T_{\ppn}(I) = \dim_{\kk} T_{\nnp}(I) + d, \quad 
\dim_{\kk} T_{\pnp}(I) =  \dim_{\kk} T_{\npn}(I) + d, \quad \text{and }
\dim_{\kk} T_{\npp}(I) = \dim_{\kk} T_{\pnn}(I) + d. 
$$
In particular, the point $[S/I] \in \Hilb^d(\AA^3)$ is smooth if and only if $T_{\nnp}(I) = T_{\npn}(I) = T_{\pnn}(I)=0$.
\end{prop}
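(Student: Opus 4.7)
My plan is to prove the three identities uniformly, via local duality applied to the $\Z^3$-graded Artinian module $S/I$, and to deduce the smoothness criterion as a formal consequence. Write $e_i(\bfa) := \dim_{\kk} \Ext^i_S(S/I, S/I)_{\bfa}$, so that $\dim_{\kk} T(I)_{\bfa} = e_1(\bfa)$. Since $S/I$ is a zero-dimensional quotient of $S = \kk[x,y,z]$, a $\Z^3$-graded version of Serre duality (in the spirit of \cite[Lemma~2.2]{RS22}) yields $e_i(\bfa) = e_{3-i}(-\bfa - \mathbf{1})$, where $\mathbf{1} = (1,1,1)$. The involution $\bfa \mapsto -\bfa - \mathbf{1}$ restricts to bijections $\Z^3_{\ppn} \to \Z^3_{\nnp}$, $\Z^3_{\pnp} \to \Z^3_{\npn}$, and $\Z^3_{\npp} \to \Z^3_{\pnn}$, so by the symmetry of the three variables it suffices to prove the first identity.

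Next, I observe that $e_0(\bfb) = 0$ for every $\bfb$ with a negative coordinate: any graded endomorphism of $S/I$ sends $1 \in S/I$ to an element living in $\N^3$, so a negative-degree endomorphism must vanish. Local duality then forces $e_3(\bfb) = e_0(-\bfb - \mathbf{1}) = 0$ for every $\bfb \in \Z^3_{\nnp}$. Consequently, summing the graded Euler characteristic $\chi(\bfb) := \sum_i (-1)^i e_i(\bfb)$ over $\bfb \in \Z^3_{\nnp}$ collapses, using the duality bijection above, to $\dim_{\kk} T_{\ppn}(I) - \dim_{\kk} T_{\nnp}(I)$. The first identity therefore reduces to the Euler characteristic equality $\sum_{\bfb \in \Z^3_{\nnp}} \chi(\bfb) = d$. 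This is the main obstacle, and I would approach it either through the generating function for $\chi$ obtained from a minimal $\Z^3$-graded free resolution of $S/I$ (yielding a rational function of the form $P(t)P(t^{-1})/\prod_i(1-t_i)$ whose $\Z^3_{\nnp}$-supported coefficients must telescope to $d$), or, more appealingly, combinatorially via \Cref{proposition_basis}: construct an explicit bijection between the bounded connected components counted by $T_{\ppn}(I)$ and those counted by $T_{\nnp}(I)$, together with a canonical family of $d$ extra components indexed by the monomial basis of $S/I$, arising from the ``$z$-descent'' given by multiplication by a suitable power of $z$.

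Finally, the smoothness criterion follows formally. Using the graded decomposition $T(I) = \bigoplus_{\mathfrak{s}\in\mathfrak{S}} T_{\mathfrak{s}}(I)$ from \cite[Proposition 1.9]{RS22} and summing the three identities, we obtain
\begin{equation*}
\dim_{\kk} T(I) \;=\; 3d + 2\bigl(\dim_{\kk} T_{\nnp}(I) + \dim_{\kk} T_{\npn}(I) + \dim_{\kk} T_{\pnn}(I)\bigr).
\end{equation*}
Since every monomial point lies on the $3d$-dimensional smoothable component by \cite{CEVV}, smoothness at $[I]$ is equivalent to $\dim_{\kk} T(I) = 3d$, which forces the three doubly-negative tangent pieces to vanish.
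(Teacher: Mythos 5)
This proposition is not proved in the paper at all: it is quoted verbatim from \cite[Theorem 2.4]{RS22}, so there is no internal argument to compare yours against. Judged on its own terms, your reductions are correct: graded local duality for finite-length modules over $S=\kk[x,y,z]$ does give $e_i(\bfa)=e_{3-i}(-\bfa-\mathbf{1})$ for $M=N=S/I$; the involution $\bfa\mapsto -\bfa-\mathbf{1}$ does interchange the three pairs of complementary signatures; $e_0$ is supported in $\N^3$ because $\Hom_S(S/I,S/I)\simeq S/I$, whence $e_0=e_3=0$ on $\Z^3_{\nnp}$; and the final deduction of the smoothness criterion from the three identities, the decomposition $T(I)=\bigoplus_{\mathfrak{s}}T_{\mathfrak{s}}(I)$, and the fact that $[I]$ lies on the $3d$-dimensional smoothable component is fine.

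The gap is that the identity $\sum_{\bfb\in\Z^3_{\nnp}}\chi(\bfb)=d$ \emph{is} the theorem, and ``must telescope to $d$'' / ``construct an explicit bijection'' are statements of intent rather than proofs. Fortunately your first route does close, and in a few lines, so you should write it out. From a $\Z^3$-graded free resolution one gets the Laurent-polynomial identity $\sum_{\bfa}\chi(\bfa)t^{\bfa}=H(t)\,H(t^{-1})\prod_{i=1}^{3}(1-t_i^{-1})$, where $H(t)=\sum_{\bfm\in E_I}t^{\bfm}$ is the $\Z^3$-graded Hilbert series of $S/I$ (all factors are honest Laurent polynomials, so there are no expansion ambiguities). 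Hence
\[
\sum_{\bfb\in\Z^3_{\nnp}}\chi(\bfb)\;=\;\sum_{\bfm,\bfm'\in E_I}\ \sum_{T\subseteq\{1,2,3\}}(-1)^{|T|}\,\bigl[\bfm-\bfm'-\bfe_T\in\Z^3_{\nnp}\bigr],
\]
where $\bfe_T\in\{0,1\}^3$ is the indicator vector of $T$ and $[\cdot]$ is the Iverson bracket. For fixed $(\bfm,\bfm')$ the inner sum factors coordinatewise: in the first coordinate it is $[\bfm_1-\bfm'_1<0]-[\bfm_1-\bfm'_1\le 0]=-[\bfm_1=\bfm'_1]$, likewise in the second, and in the third it is $[\bfm_3-\bfm'_3\ge 0]-[\bfm_3-\bfm'_3>0]=[\bfm_3=\bfm'_3]$; the product is $[\bfm=\bfm']$. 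The double sum therefore equals $\#E_I=d$, completing the first identity, and the other two follow by permuting the variables as you say. With this computation supplied, your argument is a complete and self-contained derivation of the quoted result.
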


We can now formulate our main result,
which settles the monomial case of \cite[Conjecture 4.25]{H23}.

\begin{thm} \label{ThmSingularMonomial} Let $I \subseteq S$ be a cofinite
    monomial ideal. If $I$ admits a singularizing triple, then $T_{\nnp}(I)$,
    $T_{\npn}(I)$ and $T_{\pnn}(I)$ are all non-zero, so $\degold T(I)\geq
    3\degold(S/I) + 6$.
\end{thm}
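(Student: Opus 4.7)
By the symmetry of the definitions under permuting the three coordinates, it suffices to exhibit a nonzero element of $T_{\nnp}(I)$ (the other two cases follow \emph{verbatim}).  By \Cref{proposition_basis}, this reduces to producing some $\bfd\in\Z^3_{\nnp}$ for which $(I+\bfd)\setminus I$ admits at least one bounded connected component.  The bound $\degold T(I)\geq 3d+6$ then follows by adding the two contributions from each doubly-negative signature to the three equalities of \Cref{theorem_smooth_monomials}.

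The guiding construction uses the singularizing triple $\{\bfa,\bfb,\bfc\}$ to produce a candidate $\bfd$ as follows. Consider the join
\[
\bfw\;:=\;\bfa\vee\bfb\;=\;(\bfa_1,\bfb_2,\max(\bfa_3,\bfb_3)).
\]
Because $\bfb_2>\bfa_2$, we have $\bfw\geq \bfa+\bfe_2\in I$, so $\bfw\in I$; moreover $\bfw_1=\bfa_1>\bfc_1$, $\bfw_2=\bfb_2>\bfc_2$, and $\bfw_3=\max(\bfa_3,\bfb_3)<\bfc_3$.  I then look for a monomial $\bfv\in I$ in the region
\[
R\;=\;\{\bfu\in\N^3:\bfu_1>\bfc_1,\;\bfu_2>\bfc_2,\;\bfu_3\leq\bfc_3\},
\]
which is nonempty since it contains $\bfw$, and set $\bfd:=\bfc-\bfv\in\Z^3_{\nnp}$.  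Then $\bfc-\bfd=\bfv\in I$ and $\bfc\in\soc(S/I)\subseteq E_I$, so $\bfc\in(I+\bfd)\setminus I$.  The ``upward'' neighbors $\bfc+\bfe_i$ all lie in $I$ since $\bfc$ is a socle element, so they do not belong to $(I+\bfd)\setminus I$; the analysis of the ``downward'' neighbors $\bfc-\bfe_i$ hinges on whether $\bfv-\bfe_i\in I$ or not.

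The main obstacle is the appropriate choice of $\bfv$, which has to ensure that the component of $\bfc$ stays bounded.  The cleanest situation is when $\bfv$ can be taken to be a \emph{minimal generator} of $I$ lying in $R$: then $\bfv-\bfe_i\notin I$ for every $i$, so each $\bfc-\bfe_i$ satisfies $(\bfc-\bfe_i)-\bfd=\bfv-\bfe_i\notin I$, showing $\{\bfc\}$ is an isolated (hence bounded) component.  Existence of such a minimal generator is proved by contradiction: if every minimal generator $\bfv\leq \bfw$ had $\bfv_1\leq\bfc_1$ (say), then factoring $\bfw$ through such a $\bfv$ would force the monomial $(\bfc_1,\bfb_2,\max(\bfa_3,\bfb_3))$ to lie in $I$; comparison with the socle element $\bfb$ yields a contradiction, except in the degenerate case where some auxiliary socle element $\bfs$ dominates it, in which case $\{\bfa,\bfs,\bfc\}$ is itself a singularizing triple and one iterates the argument (or uses $\bfs$ directly).

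In the residual case in which no minimal generator lies in $R$, I would use $\bfv:=\bfw$ itself and check that the component of $\bfc$ in $(I+\bfd) \setminus I$ is still bounded — its points will include $\bfc$ together with finitely many neighboring socle-like monomials ($\bfc-\bfe_1$, $\bfc-\bfe_2$, \ldots), and the absence of an escape to infinity is forced precisely by the socle structure used above: the potential extension beyond $\bfc-\bfe_1$ is blocked because the shifted monomial $(\bfc_1,\bfb_2+k,\max(\bfa_3,\bfb_3))$ eventually lands in $E_I$ below $\bfb$.  Once a bounded component is identified, the remaining verifications are direct and give $T_{\nnp}(I)\neq 0$, completing the proof.
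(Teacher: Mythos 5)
Your reduction to exhibiting a bounded connected component in a doubly-negative degree is correct, and your observation in the ``cleanest situation'' is genuinely nice: if $\bfv$ is a \emph{minimal generator} of $I$ with $\bfv_1>\bfc_1$, $\bfv_2>\bfc_2$, $\bfv_3\le\bfc_3$, then $\{\bfc\}$ is an isolated, hence bounded, component of $(I+(\bfc-\bfv))\setminus I$, and \Cref{proposition_basis} finishes the job. The gap is that such a minimal generator need not exist, your argument for its existence does not close, and your fallback $\bfv=\bfw$ fails. Take $E_I$ to be the downward closure of the pairwise incomparable monomials $(10,0,0)$, $(0,10,0)$, $(5,5,10)$, i.e.\ $I=(x^{11},y^{11},x^6y,xy^6,x^6z,y^6z,z^{11})$, with the singularizing triple $\bfa=(10,0,0)$, $\bfb=(0,10,0)$, $\bfc=(5,5,10)$. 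No minimal generator has both first and second coordinate $>5$, so $R$ contains none. Your contradiction step also does not fire: the minimal generator $(1,6,0)\le\bfw$ has $\bfv_1\le\bfc_1$, which only forces $(\bfc_1,\bfb_2,0)=(5,10,0)\in I$, and this is incomparable with $\bfb=(0,10,0)$ — your comparison with $\bfb$ requires $\bfc_1\le\bfb_1$ and $\bfa_3\le\bfb_3$, neither of which is guaranteed by the definition of a singularizing triple. Falling back to $\bfv=\bfw=(10,10,0)$, $\bfd=(-5,-5,10)$, the component of $\bfc$ in $(I+\bfd)\setminus I$ contains every point $(j,5,10)$ with $-4\le j\le 5$: for such $j$ one has $(j,5,10)-\bfd=(j+5,10,0)$ divisible by $xy^6$, hence in $I$, while $(j,5,10)$ is either in $E_I$ (for $j\ge 0$) or outside $\N^3$ (for $j<0$). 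The component therefore leaves $\N^3$ and is unbounded, so it contributes nothing to $T_{\nnp}(I)$.

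This failure is precisely what the paper's proof is engineered to avoid, and there is no single ``well-chosen translate plus one-point component'' shortcut in general. The paper first renormalizes $\bfc$ to have maximal third coordinate among admissible socle elements (this controls escape at level $\bfc_3+1$), then works with two planar slices $I_\ell\subseteq I_h$ of the staircase and invokes \Cref{LemmaDoubleNegativePlane}, whose proof selects the planar shift $\bfd$ as a \emph{maximal} element of the set of shifts for which a certain intersection is nonempty; it is this maximality, together with the perimeter conditions on a rectangle, that blocks every escape route within the slice, and a separate three-level check at heights $h-1$, $h$, $h+1$ blocks escape in the $z$-direction. The resulting bounded component is in general larger than a single point. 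To salvage your approach you would need a replacement for this maximality-and-levels machinery; the minimal-generator case you identified is a valid but strictly partial special case.
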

\begin{proof}
    Once we show that $T_{\nnp}(I)$, $T_{\npn}(I)$, $T_{\pnn}(I)$ are
    non-zero, the remaining part of the claim follows from
    \Cref{theorem_smooth_monomials}.

Let  $\{\bfa, \bfb, \bfc\} \subseteq \soc(S/I)$ be a singularizing triple, with 
\[
\bfa_1 >\bfb_1, \bfc_1,
\quad
\bfb_2 > \bfa_2, \bfc_2,
\quad
\bfc_3 > \bfa_3, \bfb_3.
\]

Up to replacing $\bfc$, we assume that the third coordinate $\bfc_3$ is the largest  among all the socle elements $\bfc$ satisfying $\bfc_1< \bfa_1$ and  $\bfc_2 < \bfb_2$.
We are going to  construct a bounded connected component corresponding to the
doubly-negative signature $\nnp$ (\Cref{proposition_basis}). In particular,
this will ensure $T_{\nnp}(I) \ne 0$, and by symmetry, our argument will also imply that $T_{\npn}(I), T_{\pnn}(I) \ne 0$.

For each $k \in \N$, we define the \textbf{$k$-th levels} of $I$ and $E$ by
$$
I_k = \big\{ (v_1, v_2) \in \N^2 \, \mid \, (v_1, v_2,k) \in I \big\},
\qquad
E_k = \big\{ (v_1, v_2) \in \N^2 \, \mid \, (v_1, v_2,k) \in E_I \big\} = \N^2 \setminus I_k.
$$ 
In particular, we may interpret $I_k$ as an ideal in $\kk[x,y]$. 
We set $I_{-1} := \emptyset$ and we have the containments $I_{k-1} \subseteq I_{k}$ for all $k\geq 0$.

\underline{Low level:}
Let $\ell \in \N$ be the smallest integer such that $(\bfa_1, \bfb_2, \ell) \in I$.
Since $\bfa, \bfb \in \soc(S/I)$, we have that $(\bfa_1, \bfa_2+1, \bfa_3), (\bfb_1+1, \bfb_2, \bfb_3) \in I$, so 
$(\bfa_1, \bfb_2, \bfa_3), (\bfa_1, \bfb_2, \bfb_3) \in I$,
and therefore
 $0 \leq\ell \leq \min(\bfa_3,\bfb_3) < \bfc_3$.
Since $\bfa, \bfb \notin I$, we have $(\bfa_1, \bfa_2, \ell), (\bfb_1, \bfb_2, \ell) \notin I$.
We deduce that
\begin{equation}\label{EqLowAvoidsContour}
\big\{
\bfv \in \N^2\, \mid \, 
\bfv \leq (\bfa_1,\bfa_2) \text{ or }
\bfv \leq (\bfb_1,\bfb_2)
\big\}
\cap I_\ell = \emptyset.
\end{equation}
Moreover, by definition $(\bfa_1,\bfb_2, \ell-1) \notin I$, thus,
\begin{equation}\label{EqLevelBelowLow}
\big\{
\bfv \in \N^2\, \mid \, 
\bfv \leq (\bfa_1,\bfb_2)
\big\}
\cap I_{\ell-1} = \emptyset.
\end{equation}

\underline{High level:} Let $h = \bfc_3$.
We claim that
\begin{equation}\label{EqHighIncludesContour}
\big\{
\bfv \in \N^2\, \mid \, 
\bfv \geq (\bfa_1,\min\{\bfa_2,\bfc_2\}) \text{ or }
\bfv \geq (\min\{\bfc_1,\bfb_1\},\bfb_2)
\big\}
\subseteq I_h.
\end{equation}
Equivalently, it suffices to show that 
$(\bfa_1,\min\{\bfa_2,\bfc_2\})\in I_h$ and $
(\min\{\bfc_1,\bfb_1\},\bfb_2)	\in I_h$.
For the former:
if $\bfa_2 \leq \bfc_2$ then $(\bfa_1,\min\{\bfa_2,\bfc_2\})=
(\bfa_1,\bfa_2)\in I_h$ because $(\bfa_1,\bfa_2,h)\geq (\bfa_1,\bfa_2,\bfa_3+1)\in I$;
if $\bfa_2 > \bfc_2$ then $(\bfa_1,\min\{\bfa_2,\bfc_2\})=
(\bfa_1,\bfc_2)\in I_h$ because $(\bfa_1,\bfc_2,h)\geq (\bfc_1+1,\bfc_2,\bfc_3)\in I$.
The other is analogous.

Next, we claim that
\begin{equation}\label{EqLevelAboveHigh}
\big\{
\bfv \in \N^2\, \mid \, 
\bfv \geq (\min\{\bfc_1,\bfb_1\}, \min\{\bfc_2,\bfa_2\})\,\,\big\}
\subseteq I_{h+1}.
\end{equation}
Equivalently, it suffices to show that $(\min\{\bfc_1,\bfb_1\}, \min\{\bfc_2,\bfa_2\}) \in I_{h+1}$.
Assume by contradiction that $(\min\{\bfc_1,\bfb_1\}, \min\{\bfc_2,\bfa_2\}) \in E_{h+1}$.
Since $\soc(S/I)$ is the set of the maximal elements of $E_I$,
there exists $\bfw \in \soc(S/I)$ such that 
$
\bfw_1 \geq \min\{\bfc_1,\bfb_1\},
\bfw_2 \geq \min\{\bfc_2,\bfa_2\},
\bfw_3 \geq h+1.
$
If $\bfw_1 \geq \bfc_1 $ and $\bfw_2 \geq \bfc_2$ then $\bfw \geq \bfc$, contradicting
the fact that distinct socle monomials are incomparable.
Without loss of generality, 
we may assume that $\bfw_1 < \bfc_1$, and thus $\bfw_1 \geq \bfb_1$.
If $\bfw_2 \geq \bfb_2$, then $\bfw \geq \bfb$, which again gives a contradiction.
Thus, $\bfw_2 < \bfb_2$ and, since $\bfw_1 < \bfc_1 < \bfa_1$,  we conclude that $\{\bfa, \bfb, \bfw\}$ is a singularizing triple. 
Since $\bfw_3 \geq h+1 > \bfc_3$
this contradicts our choice of $\bfc$, and thus \Cref{EqLevelAboveHigh} is proved.

We will now use the low levels $E_\ell, I_\ell$ and the high levels $E_h, I_h$ to locate a bounded connected component of  $(I+\bfd) \setminus I$ for some appropriate vector $\bfd\in\Z^3_{\nnp}$.

Consider the rectangle 
$$
\mathcal{R}= \{\bfv \in \N^2 \, \mid \, 
\big(\min\{\bfc_1,\bfb_1\}, \min\{\bfc_2,\bfa_2\}\big) \leq \bfv \leq (\bfa_1,\bfb_2)\big\} \subseteq \N^2.
$$
Our discussion above implies a few things for the ideals $I_\ell, I_h$ at the low and high level, in relation to this rectangle. 
By definition of $\ell$, the ideal $I_\ell$ contains the upper-right corner of $\mathcal{R}$, i.e, 
$ (\bfa_1,\bfb_2) \in I_\ell$.
Moreover, by \Cref{EqLowAvoidsContour}, $I_\ell$ does not contain the lower-left perimeter of $\mathcal{R}$, i.e., 
$$
\big\{ \bfv \in \mathcal{R}  \mid  \bfv_1 = \min\{\bfc_1,\bfb_1\}
\text{ or } \bfv_2 =  \min\{\bfc_2,\bfa_2\}\,\, \big\}\cap I_\ell = \emptyset.
$$
The situation for the ideal $I_h$ at the high level is exactly mirrored.
Since $(\bfc_1, \bfc_2) \in E_h$,
the ideal $I_h$ does not contain the lower-left corner of $\mathcal{R}$, i.e., 
$(\min\{\bfc_1,\bfb_1\}, \min\{\bfc_2,\bfa_2\}) \notin I_h$. 
Moreover,  by \Cref{EqHighIncludesContour}, the ideal $I_h$ contains the upper-right perimeter of the rectangle of $\mathcal{R}$:
$$
\big\{
\bfv \in \mathcal{R} \, \mid\, 
\bfv_1 = \bfa_1
\text{ or }
\bfv_2 =  \bfb_2 \,
\big\}\subseteq I_h .
$$
All of this, together with the fact that $I_\ell \subseteq I_h$, shows that the assumptions of \Cref{LemmaDoubleNegativePlane} are satisfied.
Hence, there exist a $\bfd \in \Z^2$ with $\bfd \leq (-1,-1)$ and a (non-empty) connected component
$\mathcal{C}$ of the set
$\big((I_\ell\cap \mathcal{R})+\bfd\big)\cap \mathcal{R}\setminus I_h$
that is also a connected component of $(I_\ell+\bfd) \setminus I_h$.

Consider then the nnp vector $\bfe = (\bfd_1, \bfd_2, h-\ell)\in \Z^3$.
We have 
\begin{align*}
\mathcal{D} :=
\mathcal{C}\times \{h\} &\subseteq \big((I_\ell+\bfd) \setminus I_h\big)\times \{h\}\\
&=\big( (I_\ell + \bfd)\times \{h\}\big)\setminus (I_h\times\{h\}) \\
&=\big( (I+\bfe) \setminus I\big) \cap \{\bfv \in \Z^3 \,\mid \, \bfv_3 = h\}.
\end{align*}
We claim that $\mathcal{D}$ is a bounded connected component of $ (I+\bfe) \setminus I$;
this would conclude the proof of the theorem.
Clearly it is bounded and connected.
To show it is a  component, we must show that none of the points immediately  adjacent to $\mathcal{D}$ lie in $(I+\bfe) \setminus I$.
Note that such points $\bfv$ must have $\bfv_3 \in \{h-1,h,h+1\}$.
 We consider the three cases separately.

\begin{itemize}
\item Level $h$: 
Clearly, $\mathcal{D}$ is a connected component of $ \big( (I+\bfe) \setminus I\big) \cap \{\bfv \in \Z^3 \,\mid \, \bfv_3 = h\}.
$
This implies that no points of $\{\bfv \in \Z^3 \,\mid \, \bfv_3 = h\}$ that are immediately adjacent to $\mathcal{D}$ lie in $(I+\bfe) \setminus I$.
\item Level $h+1$:
The set of points of  $\{\bfv \in \Z^3 \,\mid \, \bfv_3 = h+1\}$ that are immediately adjacent to $\mathcal{D}$
is $\mathcal{C}\times\{h+1\}$.
Since $\mathcal{C}\subseteq \mathcal{R}$, and $\mathcal{R}\subseteq I_{h+1}$ by \Cref{EqLevelAboveHigh},
we have 
$$
(\mathcal{C}\times\{h+1\})\cap \big((I+\bfe)\setminus I\big) \subseteq 
(\mathcal{C}\times\{h+1\}) \setminus I = (\mathcal{C} \setminus I_{h+1})\times\{h+1\}=\emptyset.
$$
Thus, no points of $\{\bfv \in \Z^3 \,\mid \, \bfv_3 = h+1\}$ that are immediately adjacent to $\mathcal{D}$ lie in $(I+\bfd) \setminus I$.
\item Level $h-1$:
The set of points of  $\{\bfv \in \Z^3 \,\mid \, \bfv_3 = h-1\}$ that are immediately adjacent to $\mathcal{D}$
is $\mathcal{C}\times\{h-1\}$.
By \Cref{EqLevelBelowLow}, we see that $\mathcal{R}\cap I_{\ell-1} = \emptyset$.
We also have $\mathcal{C}\subseteq (I_\ell\cap \mathcal{R})+\bfd$.
Thus, 
\begin{align*}
(\mathcal{C}\times\{h-1\})\cap \big((I+\bfe)\setminus I\big) 
&\subseteq 
\big((I_\ell\cap \mathcal{R})+\bfd)\times\{h-1\}\big)\cap \big((I+\bfe)\setminus I\big)
\\
&\subseteq 
\big((I_\ell\cap \mathcal{R})+\bfd)\times\{h-1\}\big)\cap (I+\bfe)
\\
&=
\big((I_\ell\cap \mathcal{R})+\bfd)\times\{h-1\}\big)\cap (I+\bfe)\cap \{\bfv \in \Z^3 \,\mid \, \bfv_3 = h-1\}
\\
&
=
\big((I_\ell\cap \mathcal{R})+\bfd)\times\{h-1\}\big)\cap \big((I_{\ell-1}+\bfd)\times\{h-1\}\big)
\\
&
=
\big(((I_\ell\cap \mathcal{R})+\bfd)\cap (I_{\ell-1}+\bfd)\big)\times\{h-1\}
\\
&
=
\big((I_\ell\cap \mathcal{R}\cap I_{\ell-1})+\bfd\big)\times\{h-1\}
\\&
=
\big(( \mathcal{R}\cap I_{\ell-1})+\bfd\big)\times\{h-1\} = \emptyset.
\end{align*}
Again, no  point of $\{\bfv \in \Z^3 \,\mid \, \bfv_3 = h-1\}$ that is immediately adjacent to $\mathcal{D}$ lies in $(I+\bfd) \setminus I$. \qedhere
\end{itemize}
\end{proof}

\begin{lemma}\label{LemmaDoubleNegativePlane}
Let $L \subseteq H\subseteq \N^2$ correspond to two cofinite monomial ideals of 
$\Bbbk[x,y]$.
Let $\bfg, \bfs \in \N^2$ be two monomials with $\bfg_1 < \bfs_1$ and $\bfg_2 < \bfs_2$ and consider the rectangle $$
\mathcal{R}=\{ \bfv \in \N^2 \, \mid \, \bfg \leq \bfv \leq \bfs\}\subseteq \N^2.
$$
Suppose the following conditions hold:

\begin{itemize}
\item $L$ contains the upper-right corner of $\mathcal{R}$: $\bfs\in L$.
\item $L$ avoids the lower-left perimeter of $\mathcal{R}$: let $\mathcal{L}:= \{\bfv \in \mathcal{R}\,\mid\, \bfv_1 = \bfg_1 \text{ or } \bfv_2 = \bfg_2\}$, then $\mathcal{L}\cap L = \emptyset$.
\item $H$ does not contain the lower-left corner: $\bfg \notin H$.
\item $H$ contains the upper-right perimeter of $\mathcal{R}$:
let $\mathcal{U}:= \{\bfv \in \mathcal{R}\,\mid\, \bfv_1 = \bfs_1 \text{ or } \bfv_2 = \bfs_2\}$, then $\mathcal{U}\subseteq H$.
\end{itemize}
Then, there exists a $\bfd \in \Z^2$ such that $\bfd \leq (-1,-1)$ and a (non-empty) connected component
$\mathcal{C}$ of 
$$\big((L\cap \mathcal{R})+\bfd\big)\cap \mathcal{R}\setminus H$$
that is also a connected component of $(L+\bfd) \setminus H$.
\end{lemma}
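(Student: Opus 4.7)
The plan is to construct $\bfd$ and $\mathcal{C}$ via a maximal-shift argument. Consider the finite set
\[
    \mathcal{D} := \bigl\{\, \bfd \in \Z^2 \,:\, \bfd \leq (-1,-1) \text{ and } ((L\cap \mathcal{R})+\bfd)\cap \mathcal{R}\setminus H \neq \emptyset \,\bigr\}.
\]
First, $\mathcal{D}\neq\emptyset$: the shift $\bfd_0 := \bfg-\bfs$ satisfies $\bfd_0 \leq (-1,-1)$ since $\bfg_i < \bfs_i$ for $i=1,2$, and $\bfs+\bfd_0 = \bfg$ witnesses non-emptiness because $\bfs \in L\cap \mathcal{R}$ and $\bfg \notin H$. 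Take $\bfd$ to be a componentwise-maximal element of $\mathcal{D}$, and let $\mathcal{C}$ be any connected component of the non-empty set $\mathcal{C}_0 := ((L\cap \mathcal{R})+\bfd)\cap \mathcal{R}\setminus H$. Denote $\mathcal{R}_{\bfd} := \{\bfv\in\N^2 : \bfg\leq \bfv \leq \bfs+\bfd\}$. Unwinding definitions, $\mathcal{C}_0 = (L+\bfd)\cap \mathcal{R}_{\bfd}\setminus H$, so $\mathcal{C}\subseteq \mathcal{R}_{\bfd}$.

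The crux is to verify that $\mathcal{C}$ is also a connected component of the ambient set $(L+\bfd)\setminus H$, equivalently that no lattice neighbour $\bfu \in (L+\bfd)\setminus H$ of a point $\bfv \in \mathcal{C}$ lies outside $\mathcal{C}$. The analysis splits by the location of $\bfu$:
\begin{itemize}
    \item If $\bfu$ is outside $\mathcal{R}$ on its upper or right side, then $\bfv \in \mathcal{U}\subseteq H$, contradicting $\mathcal{C}\cap H = \emptyset$.
    \item If $\bfu$ is inside $\mathcal{R}$ but strictly past the upper or right edge of $\mathcal{R}_{\bfd}$, write $\bfu = \bfv+e_i$ with $v_i = \bfs_i+\bfd_i$: when $\bfd_i\leq -2$, then $\bfu-(\bfd+e_i) = \bfv-\bfd \in L\cap\mathcal{R}$ and $\bfu\notin H$ show $\bfd+e_i\in\mathcal{D}$, contradicting the maximality of $\bfd$; when $\bfd_i=-1$, then $u_i=\bfs_i$ places $\bfu\in\mathcal{U}\subseteq H$, contradicting $\bfu\in(L+\bfd)\setminus H$.
    \item If $\bfu$ is outside $\mathcal{R}$ on its lower or left side, say $\bfu = \bfv-e_1$ with $v_1=\bfg_1$, one shows $\bfu\notin L+\bfd$. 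When $\bfd_1=-1$, the translate $\bfu-\bfd$ has first coordinate $\bfg_1$ and lies in $\mathcal{R}$, so it lies on $\mathcal{L}$ and hence $\bfu-\bfd\notin L$ by hypothesis. When $\bfd_1\leq -2$, the automatic bound $\bfd\geq \bfg-\bfs$ (needed for $\mathcal{R}_{\bfd}\neq\emptyset$, hence for $\bfd\in\mathcal{D}$) ensures $\bfu-\bfd\in\mathcal{R}$; then $\bfu-\bfd\in L$ would give $\bfu-\bfd\in L\cap\mathcal{R}$, and $\bfv$ together with the shift $\bfd+e_1$ would witness $\bfd+e_1\in\mathcal{D}$, again contradicting maximality.
\end{itemize}

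The delicate step is the lower-left analysis when $\bfd_1\leq -2$: here the hypothesis $\mathcal{L}\cap L=\emptyset$, the inequality $\bfd\geq \bfg-\bfs$ built into $\mathcal{D}$, and the componentwise maximality of $\bfd$ must be woven together to block every possible escape of $\mathcal{C}$ beyond $\mathcal{R}_{\bfd}$.
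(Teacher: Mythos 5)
Your proposal is correct and follows essentially the same strategy as the paper's proof: the same maximal shift $\bfd$ with the same witness $\bfg-\bfs$, the same dichotomy between escaping past the upper-right versus the lower-left boundary, and the same contradiction obtained by incrementing $\bfd$ by a unit vector. Your reformulation via the shrunken rectangle $\mathcal{R}_{\bfd}$ is a cosmetic repackaging of the paper's translation-by-$-\bfd$ case analysis, and all steps (in particular the bound $\bfd\geq\bfg-\bfs$ and the split $\bfd_i=-1$ versus $\bfd_i\leq -2$) check out.
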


\begin{proof}

Choose $\bfd$ to be a maximal
(with respect to the partial order $\leq $ in $\Z^2$)
element in the set
$$
\big\{
\bfd \in \Z^2\, \mid \, \bfd \leq (-1,-1) \text{ and } \big((L \cap \mathcal{R})+ \bfd\big) \cap  \mathcal{R} \setminus H\ne \emptyset
\big\}.
$$
This set is non-empty, since it contains the difference of the two corners $\bfg-\bfs$,
thus, $\bfd$ is well defined. 
Let $\mathcal{C}$ be a connected component of 
$\big((L\cap \mathcal{R})+\bfd\big)\cap \mathcal{R}\setminus H$.
We claim that  $\mathcal{C}$ is also a connected component of  $(L + \bfd) \setminus H$.
Assume by contradiction that this is not the case; then, 
there exist two adjacent points  $\bfp, \bfq \in (L + \bfd) \setminus H$
such that  $\bfp \in \mathcal{C}$ and $\bfq \notin \big((L\cap \mathcal{R})+\bfd\big)\cap \mathcal{R}\setminus H$.
More precisely, we have that either 
$\bfq \notin \mathcal{R}$
or 
$\bfq \in  \mathcal{R}\setminus \big((L \cap \mathcal{R})+\bfd\big)$.

\underline{Case  $\bfq \notin \mathcal{R}$.}
In this case, $\bfp$ must lie on the perimeter $\mathcal{L}\cup \mathcal{U}$ of the rectangle $\mathcal{R}$, and $\bfq$ must be adjacent to it, immediately outside $\mathcal{R}$.
Since $H$ contains the upper-right perimeter $\mathcal{U}$ and $\bfp \notin H$,
it follows that $\bfp\in\mathcal{L}\setminus\mathcal{U}$.
Thus, there are two further subcases: either
$\bfp$ lies in the bottom edge of $\mathcal{R}$ and $\bfq$ immediately below it, or
$\bfp$ lies in the left edge of $\mathcal{R}$ and $\bfq$ immediately to its left.
In terms of coordinates,
either
\begin{equation}\label{EqCaseInTheBottomEdge}
\bfg_1 \leq \bfp_1 < \bfs_1, \quad  \bfp_2 = \bfg_2,\quad 
\bfq_1 = \bfp_1, \quad
\bfq_2 = \bfp_2-1,
\end{equation}
or
\begin{equation}\label{EqCaseInTheLeftEdge}
\bfp_1 = \bfg_1,\quad  \bfg_2\leq \bfp_2 < \bfs_2,\quad
\bfq_1 = \bfp_1-1, \quad \bfq_2 = \bfp_2.
\end{equation}
Since the two cases are symmetric, we may, without loss of generality, assume that \Cref{EqCaseInTheBottomEdge} holds.

Since $\bfp \in \mathcal{C}\subseteq (L \cap\mathcal{R})+\bfd$, 
we have $\bfp-\bfd \in L \cap \mathcal{R} \subseteq  \mathcal{R}\setminus \mathcal{L}$, 
thus,
$\bfg_1 <\bfp_1-\bfd_1 \leq \bfs_1$ and $\bfg_2 < \bfp_2-\bfd_2 \leq \bfs_2$.
It follows from \Cref{EqCaseInTheBottomEdge}
that 
$\bfg_1 <\bfq_1-\bfd_1 \leq \bfs_1$ and $\bfg_2 \leq \bfq_2-\bfd_2 < \bfs_2$,
in particular, that $\bfq - \bfd \in \mathcal{R}$.
Since $\bfq \in L + \bfd$, we deduce that $\bfq -\bfd\in L\cap \mathcal{R} \subseteq \mathcal{R}\setminus \mathcal{L}$,
so $\bfq_2-\bfd_2 > \bfg_2$.
Since $\bfq_2 = \bfp_2-1=\bfg_2-1$,
we finally conclude that $\bfd_2<-1$.

Let $\bfd' = \bfd+(0,1)$. 
Observe that $\bfd'\leq (-1,-1)$ by the previous paragraph.
Since $\bfp-\bfd'=\bfq-\bfd$,
we have $\bfp-\bfd' \in L \cap \mathcal{R}$ by the previous paragraph.
Since $\bfp \in \mathcal{C}\subseteq \mathcal{R}\setminus H$, we have
$\bfp\in 
\big((L\cap \mathcal{R})+\bfd'\big)\cap \mathcal{R}\setminus H$,
contradicting  the maximality of $\bfd$.
This concludes the proof of this case.

\underline{Case $\bfq \in  \mathcal{R}\setminus \big((L \cap \mathcal{R})+\bfd\big)$.}
This case is analogous to the previous one with, roughly speaking, 
all the roles being reversed by the translation by $-\bfd$. 
The assumption is equivalent to $\bfq-\bfd \in  (\mathcal{R}-\bfd)\setminus (L \cap \mathcal{R})$.
Since $\bfq \in L+\bfd$, we have $\bfq-\bfd \in L$, and thus
$\bfq-\bfd \notin  \mathcal{R}$.
Since $\bfp \in \mathcal{R}+\bfd$, we have $\bfp-\bfd\in \mathcal{R}$.
Since $\bfp-\bfd,\bfq-\bfd$ are adjacent, 
we conclude that $\bfp-\bfd$ must lie on the perimeter $\mathcal{L}\cup \mathcal{U}$ of $\mathcal{R}$, 
and $\bfq -\bfd$ immediately outside $\mathcal{R}$.
Since $L\cap\mathcal{L}=\emptyset$ and $\bfp \in L+\bfd$,
it follows that $\bfp - \bfd \in \mathcal{U}\setminus\mathcal{L}$.
Again, there are two subcases:
either
$\bfp -\bfd$ lies in the top edge of $\mathcal{R}$ and $\bfq - \bfd$ is immediately above it,
or 
$\bfp -\bfd$ lies in the right edge of $\mathcal{R}$ and $\bfq - \bfd$ is immediately to its right.
In terms of coordinates,
either
\begin{equation}\label{EqCaseInTheTopEdge}
\bfg_1 < \bfp_1-\bfd_1 \leq \bfs_1, \quad  \bfp_2-\bfd_2 = \bfs_2,\quad 
\bfq_1 = \bfp_1, \quad
\bfq_2 = \bfp_2+1
\end{equation}
or
\begin{equation}\label{EqCaseInTheRightEdge}
\bfp_1-\bfd_1 = \bfs_1,\quad  \bfg_2<\bfp_2-\bfd_2 \leq \bfs_2,\quad
\bfq_1 = \bfp_1+1, \quad \bfq_2 = \bfp_2.
\end{equation}
Again the two cases are symmetric, and we may assume that \Cref{EqCaseInTheTopEdge} holds.

Since $\bfq \in \mathcal{R} \setminus H \subseteq \mathcal{R} \setminus \mathcal{U}$, we have $\bfq_2 < \bfs_2$.
Using \Cref{EqCaseInTheTopEdge}, we conclude that $\bfd_2= \bfq_2 - 1 - \bfs_2 < -1$.

Let $\bfd' = \bfd+(0,1)$. 
Since 
$
\bfp \in \mathcal{C}\subseteq (L\cap \mathcal{R})+\bfd,
$ 
we have $\bfp-\bfd \in L\cap\mathcal{R}$ and, therefore,
$\bfq=\bfp+(0,1) =  (\bfp-\bfd)+\bfd' \in (L\cap\mathcal{R})+\bfd'$.
We also have $\bfq \in \mathcal{R}$ by the assumption of this case, 
and $\bfq \notin H$ because $\bfq \in (L+\bfd) \setminus H$.
In conclusion, we have
$\bfq\in 
\big((L\cap \mathcal{R})+\bfd'\big)\cap \mathcal{R}\setminus H \ne \emptyset$,
contradicting  the maximality of $\bfd$.
This concludes the proof of this case, and of the lemma.
\end{proof}

In conclusion, we have proved the following result,  which establishes \Cref{main_conjecture} for monomial ideals.

\begin{thm} \label{ThmSmoothMonomialClassification} 
Let $[S/I] \in \Hilb^d(\AA^3)$ be such that $I \subseteq S$ is a monomial ideal.
The following conditions are equivalent
\begin{enumerate}
\item\label{it:smoothMonomial1} The point $[S/I]$ is a smooth point of the Hilbert scheme.
\item\label{it:smoothMonomial2} The ideal $I$ admits no singularizing triple.
\item\label{it:smoothMonomial3} The algebra $S/I$ admits a broken Gorenstein structure without flips.
\item\label{it:smoothMonomial4} The algebra $S/I$ admits a broken Gorenstein structure.
\item\label{it:smoothMonomial5} The ideal $I$ is licci.
\end{enumerate}
\end{thm}
\begin{proof}
    The implication $\eqref{it:smoothMonomial1} \implies \eqref{it:smoothMonomial2}$ follows from
    \Cref{ThmSingularMonomial}. The implications $\eqref{it:smoothMonomial2}\implies \eqref{it:smoothMonomial3}$ follows
    from \Cref{PropSmoothMonomial}. The implication $\eqref{it:smoothMonomial3}\implies \eqref{it:smoothMonomial4}$ is
    formal. The implication $\eqref{it:smoothMonomial3}\implies \eqref{it:smoothMonomial5}$ follows from
    \Cref{ref:NoFlipsLicci:thm}. The implication $\eqref{it:smoothMonomial5}\implies \eqref{it:smoothMonomial1}$ is well-known,
    while $\eqref{it:smoothMonomial4}\implies \eqref{it:smoothMonomial1}$ is \Cref{ref:canitbetrue:thm}.
\end{proof}

\begin{remark}[Relation to~\cite{Huibregtse_monomial}]\label{RemHuibregtse}
After completing the first version of this paper, 
we became aware of the preprint
\cite{Huibregtse_monomial} by Mark Huibregtse, 
where the author studies the tangent space to monomial points of $\Hilb^d(\AA^n)$,
with special emphasis on the case $n=3$. 
The main result,
\cite[Theorem 10.3.1]{Huibregtse_monomial},
 characterizes smooth monomial points $[S/I]\in\Hilb^d(\AA^n)$ as those 
whose corresponding staircase $E_I$ is a ``compound box''.
Being a
compound box turns out to be equivalent to the condition
\Cref{PropNoSingularizingTriple}(iii),
thus, \cite[Theorem 10.3.1]{Huibregtse_monomial}  is equivalent to the directions $\eqref{it:smoothMonomial1}
\Leftrightarrow \eqref{it:smoothMonomial2}$ in  \Cref{ThmSmoothMonomialClassification} and the two
classifications agree.
Singularizing triples or similar structures do not appear
explicitly in~\cite{Huibregtse_monomial}.

The approach used in  \cite{Huibregtse_monomial} is
purely combinatorial,  focusing on tangent spaces and relying on the visualization of  tangent vectors as \emph{Haiman arrows}
\cite{Haiman_Catalan}. 
Our approach is different,  incorporating linkage and broken Gorenstein structures  as well as Serre duality \cite{RS22}
into the combinatorics.  
 Moreover,  we prove a stronger result than non-smoothness,
resolving  \cite[Conjecture 4.25]{H23} in
\Cref{ThmSingularMonomial};
as far as we know, this stronger version does not
follow using the method of \cite{Huibregtse_monomial},
since \cite[Case 1,
Lemma~10.1.1]{Huibregtse_monomial} is not symmetric with respect to the $3$
variables.
\end{remark}

\section{Grassmann singularities} \label{sec:mon2}

The goal of this section is to investigate the nature of the singularities in
$\Hilb^d(\mathbb{A}^3)$ of
points of the smoothable component that have tangent space dimension $3d+6$.
Our analysis relies on two main tools.
In \Cref{SubsectionLinkageSingularities},
which works for an arbitrary smooth ambient scheme $X$,
we illustrate how linkage affects singularities of  $\Hilb(X)$.
As by-product, unrelated to the main purposes of this work,
we obtain new information on linkage classes in codimension three.
In \Cref{SubsectionSingularMonomial},
we perform a detailed analysis of the structure of monomial ideals in
$\kk[x,y,z]$ from the perspectives of linkage and of the combinatorial
framework  employed in \Cref{sec:mon1}, with the aim of proving Hu's
conjectures. One of the punchlines of this part is that the singularities with tangent space
dimension $3d+6$ are, in many cases, smoothly equivalent to the cone over the Pl\"ucker
embedding of $\Gr(2, 6)$, the Grassmannian of two-planes in $\kk^{\oplus 6}$.

\subsection{Linkage and singularities}\label{SubsectionLinkageSingularities}
Let $X$ be a smooth $\kk$-scheme and let $\nestedHilbX{d}{d'}$ denote the nested Hilbert
scheme of points. 
A $\kk$-point of this scheme corresponds to a pair of subschemes
$Z\subseteq Z'\subseteq X$, where $Z$ and $Z'$ are finite of degree $d$ and $d'$,
respectively. 
We denote such a closed point by $[Z\subseteq Z']$.

\begin{definitions}
We define the \textbf{locus of tuples of points} as the open subscheme of
$\nestedHilbX{d}{d'}$ consisting of pairs $[Z\subseteq Z']$ such that $Z'$ is
smooth. 
The name is justified, because over an algebraically closed field
$\kk$, a $\kk$-point of this locus is a pair $Z\subseteq Z'$, where $Z$, $Z'$
are tuples of reduced points.
Let
$$
\nestedlcilocus{d}{d'}\subseteq \nestedHilbX{d}{d'}
$$
be the locus that consists of $[Z \subseteq Z']$ with $Z'$ a locally
complete intersection.
This locus is open and inherits a natural scheme structure, as it is the preimage of 
$\Hilb^{d'}_{\text{lci}}(X)$ under the natural projection 
$$
\nestedHilbX{d}{d'} \to \Hilb^{d'}(X)$$
 and $\Hilb^{d'}_{\text{lci}}(X)$ is open in 
$\Hilb^{d'}(X)$, see \cite[\href{https://stacks.math.columbia.edu/tag/06CJ}{Tag 06CJ}]{stacks_project}.

\end{definitions}

\begin{prop} \label{smooth_equivalence_lci_superscheme}
The projection map $p\colon \nestedlcilocus{d}{d'}\to \Hilb^d(X)$ is smooth. 
The preimage of the smoothable component of $\Hilb^d(X)$ is equal (as a closed subset) to the closure of the locus of tuples of points in $\nestedlcilocus{d}{d'}$.
\end{prop}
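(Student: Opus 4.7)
My plan for the first assertion is to verify the infinitesimal lifting criterion. Take a small extension $A' \twoheadrightarrow A$ of local Artinian $\kk$-algebras with kernel $J$; given a flat deformation $\mathcal{Z}$ of $Z$ over $A'$ and a flat deformation of the pair $(\mathcal{Z}_A \subseteq \mathcal{Z}'_A)$ over $A$ with $\mathcal{Z}'_A$ lci, I need to produce $\mathcal{Z}' \supseteq \mathcal{Z}$ over $A'$ that lifts $\mathcal{Z}'_A$ and is again lci. Since the problem is local at each fat point of $Z'$, I would reduce to the case $X = \Spec R$ with $R$ a regular local ring of dimension $n$ and $\mathcal{O}_{\mathcal{Z}'_A} = (R\otimes A)/(f_1,\ldots,f_n)$ for a regular sequence. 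Any lift $\tilde F_i \in R \otimes A'$ of $f_i$ satisfies $\tilde F_i|_{\mathcal{Z}} \in J \otimes_\kk \mathcal{O}_Z$, by flatness of $\mathcal{O}_\mathcal{Z}$ over $A'$, and by surjectivity of $R \twoheadrightarrow \mathcal{O}_Z$ I can correct $\tilde F_i$ by an element of $J \otimes_\kk R$ to arrange $F_i|_{\mathcal{Z}} = 0$. The modified $F_1, \ldots, F_n$ still form a regular sequence (their Koszul complex reduces modulo the nilpotent $J$ to an exact complex, hence is exact), so they cut out the required flat lci $\mathcal{Z}'$, and one glues over the finitely many fat points of $Z'$.

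For the second assertion, let $T \subseteq \nestedlcilocus{d}{d'}$ denote the open locus where the universal $Z'$ is a reduced tuple; the inclusion $\overline T \subseteq p^{-1}(\Hilb^{d,\sm}(X))$ is immediate. For the reverse inclusion, my plan is to prove that $T$ is topologically dense in $p^{-1}(\Hilb^{d,\sm}(X))$ via two intermediate steps. First, introduce the intermediate open subset $T' := p^{-1}(\text{reduced tuples in } \Hilb^{d,\sm}(X))$: by the first assertion, the restriction $p|_{p^{-1}(\Hilb^{d,\sm}(X))}$ is smooth and hence flat, so going-down forces the generic point of every irreducible component of the preimage to map to the generic point of the irreducible scheme $\Hilb^{d,\sm}(X)$, which is a reduced tuple. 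This puts the generic points of all components of $p^{-1}(\Hilb^{d,\sm}(X))$ into $T'$, so $T'$ is dense. Second, I would show $T$ is dense in $T'$: over a reduced tuple $Z = \{p_1, \ldots, p_d\}$, the fiber of $p$ is smooth of relative dimension $n(d'-d)$ with $n = \dim X$, and the open locus of reduced $Z' = Z \sqcup \{p_{d+1}, \ldots, p_{d'}\}$ inside this fiber is also of dimension $n(d'-d)$, hence dense in every irreducible component of the fiber.

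The main obstacle is this final density claim, namely that the reduced-$Z'$ locus meets every irreducible component of the fiber of $p$ over a reduced $Z$. A priori the fiber could decompose according to the local type of the lci $Z'$ (how the extra length $d'-d$ is distributed among thickenings at the various $p_i$ and at additional fat points elsewhere in $X$), and one must show that each such component specializes from a reduced tuple. The key input is that any zero-dimensional local complete intersection in a smooth scheme is smoothable, and the smoothing can be performed with $Z$ held fixed, producing an irreducible curve inside the fiber that connects the given $(Z, Z')$ to an element of $T$.
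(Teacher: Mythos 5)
Your proof of the first assertion is essentially the paper's: both verify the infinitesimal lifting criterion by reducing to a single fat point, lifting the regular sequence cutting out $\mcZ'_A$ into the ideal of $\mcZ$, and checking that the lifted sequence is still regular so that the Koszul/syzygetic criterion gives flatness. Your reduction of the second assertion to the case of reduced $Z$ is also sound, though routed differently: you use flatness of $p$ and going-down to place the generic point of every component of $p^{-1}(\Hilb^{d,\sm}(X))$ over the generic point of the smoothable component, whereas the paper lifts a one-parameter smoothing of $Z_0$ over $\Spec(\powerseries)$ along the smooth map $p$ and passes to a general point of the lifted curve. Both reductions work.

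The gap is exactly where you locate it, and it is not closed. You need that a point $[Z\subseteq Z']$ with $Z$ reduced lies in $\overline{T}$, and your proposed mechanism is a smoothing of the lci $Z'$ \emph{with $Z$ held fixed}, i.e.\ a curve inside the fiber $p^{-1}([Z])$. That claim is true but is strictly stronger than the standard smoothability of zero-dimensional lci's: at a fat point of $Z'$ supported at a point of $Z$, a generic perturbation $f_i+tg_i$ of the defining regular sequence moves every point of the fiber off of $Z$, so one must constrain the $g_i$ to vanish at the marked point and then still argue (Bertini-style, or by translating a section of the smoothing family in a local chart) that the generic fiber is reduced. You assert this ``key input'' without proof, and your earlier fallback (the reduced locus in the fiber has the same dimension $n(d'-d)$ as the fiber, hence is dense) is not a valid inference, as you acknowledge. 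The paper avoids the relative smoothing entirely: it smooths $Z'$ over a curve $C$ with no constraint, passes to a finite base change $\tilde{C}\to C$ over which the family becomes a union of $d'$ sections, and then \emph{selects} the $d$ sections whose values at $0$ constitute $Z$; their union is a finite flat degree-$d$ family deforming $Z$ together with $Z'$. This produces a curve through $[Z\subseteq Z']$ in $\nestedlcilocus{d}{d'}$ (not in the fiber of $p$ --- $Z$ is allowed to move) whose general point is a pair of reduced tuples, which is all that the closure statement requires. To complete your argument you would either need to prove the fixed-point smoothing claim or relax it to the paper's weaker, and easier, version.
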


\begin{proof}
\def\tilC{\widetilde{C}}%
To prove that $p$ is smooth, we verify the infinitesimal lifting criterion in
its Artinian version
\cite[Proposition~1.1]{Artin_theorems_of_representability}.
Let $\Spec(A_0)\subseteq \Spec(A)$ be a closed immersion of finite local $\kk$-schemes such that  $I = \ker(A\to A_0)$ satisfies $I^2 =0$. 
Consider a commutative diagram
    \[\begin{tikzcd}
	{\nestedlcilocus{d}{d'}} & {\Spec(A_0)} \\
	{\Hilb^d(X)} & {\Spec(A)}
	\arrow[from=1-1, to=2-1]
	\arrow[from=1-2, to=1-1]
	\arrow[from=1-2, to=2-2]
	\arrow[from=2-2, to=2-1]
\end{tikzcd}\]
and let $\mcZ_0\subseteq \mcZ_0' \subseteq X \times \Spec(A_0)$ and $\mcZ\subseteq X \times \Spec(A)$ be the families corresponding to the horizontal maps. 
We need to show that there exists a finite flat family $\mcZ'\subseteq X \times \Spec(A)$ containing $\mcZ$ and restricting to  $\mcZ'_0$.
    
Since $A$ is finite and local, it is a complete local ring. By \cite[Corollary 7.6]{EisView}, the families $\mcZ$ and $\mcZ'_0$ are a disjoint union of families each of which is, topologically, a point.
We can search for $\mcZ'$ point-by-point, 
so we restrict to a point and in particular we have that $\mcZ'_0$ is defined by a regular sequence. 
Let $\mcZ'$ be given by any lift of this sequence to $\msI_{\mcZ}$.
Since $I$ is nilpotent, the inclusion $\mcZ'_0\subseteq \mcZ'$ is an isomorphism on underlying topological spaces. 
In particular the schemes $\mcZ'_0$, $\mcZ'$ have the same dimension, so the lift is again a regular sequence. 
By the syzygetic criterion for flatness, the scheme  $\mcZ'$ is flat over $\Spec(A)$. 
It is finite as well, since $\mcZ'_0$ is finite and $I$ is nilpotent \cite[Tag~00DV, nilpotent Nakayama]{stacks_project}.

By definition, the closure of the locus of tuples of points is contained in the preimage of
the smoothable component. 
To prove the other inclusion, consider a smoothable subscheme $Z_0 \subseteq X$ and a point $[Z_0\subseteq Z_0']\in \nestedlcilocus{d}{d'}$.
Let $\mcZ$ be a family of degree $d$ subschemes over $\Spec(\powerseries)$ with a smooth generic fiber and passing through $Z_0$. 
By the smoothness of $p$, this lifts to a family $[\mcZ \subseteq \mcZ']$ in $\nestedlcilocus{d}{d'}$ passing through $[Z_0\subseteq Z'_0]$. 
Thus, it is enough to prove that a general point of this curve lies in the closure of the locus of tuples of points. 
In particular, any such point is of the form $[Z\subseteq Z']$ where $Z$ is a reduced union of points and $Z'$ is a locally complete intersection. 
Fix such a pair $Z \subseteq Z'$ for the rest of the proof.
    
By~\cite[Theorem~3.10]{huneke_ulrich_cis}, the subscheme $Z'\subseteq X$ is smoothable. Consider a smoothing  $\mcZ'\subseteq X\times C$, where $(C,0)$ is an irreducible  curve and $\mcZ'|_0 = Z'$.
It is well-known (see, for example, \cite[Proposition~2.6]{jelisiejew_keneshlou}),
there is a finite surjective base change map $\tilC\to C$, where $\tilC$ is irreducible,
 and sections $s_1, \ldots , s_{d'}\colon \tilC \to \mcZ'\times_C \tilC$, such that
    \[
        \mcZ'\times_{C}\tilC = \bigcup_{i=1}^{d'} s_i(\tilC).
    \]
Pick sections $s_{i_1}$, \ldots ,$s_{i_d}$ such that $\left(s_{i_1}(\tilC) \cup  \cdots \cup s_{i_d}(\tilC)\right)|_{0} = Z$. 
Up to  shrinking $\tilC$, we may assume that no  two of these  sections intersect.
In particular, $\widetilde{\mcZ} := s_{i_1}(\tilC) \cup  \cdots \cup s_{i_d}(\tilC)$ is finite flat over $\tilC$ of degree $d$. 
This
yields an irreducible curve $\tilC \hookrightarrow \Hilb^{(d,d')}(X)$  whose general point  lies in locus of tuples of points. 
Since this curve also passes through the point $[Z \subseteq Z']$, 
this point must lie in the  closure of the locus of tuples of points.
\end{proof}

\begin{prop}[Linkage in families]\label{linkage_in_families}
 For any non-negative integers $d\leq d'$, there is an isomorphism
    \[
        L_{d,d'}\colon \nestedlcilocus{d}{d'}\to \nestedlcilocus{{d'-d}}{d'}
    \]
    which sends a family $\mcZ\subseteq \mcZ'$ to $\mcZ''\subseteq \mcZ'$,
    where $\mcZ'' := V(\Ann(\msI_{\mcZ\subseteq
\mcZ'}^{\vee}))$. 
Moreover, the composition $L_{d'-d, d'} \circ L_{d, d'}$ is the  identity map.
\end{prop}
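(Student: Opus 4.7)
The plan is to define $L_{d,d'}$ via relative linkage using the relative dualizing sheaf of $\mcZ'/T$, and then derive involutivity from the double-annihilator identity in \Cref{ref:mainlinkage:lem}.

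Given a family $[\mcZ\subseteq\mcZ']\in\nestedlcilocus{d}{d'}(T)$, the morphism $\mcZ'\to T$ is flat and locally a complete intersection, hence Gorenstein with invertible relative dualizing sheaf $\omega_{\mcZ'/T}$. First I would apply $\mathcal{H}om_{\msO_{\mcZ'}}(-,\omega_{\mcZ'/T})$ to the sequence $0\to\msI_{\mcZ\subseteq\mcZ'}\to\msO_{\mcZ'}\to\msO_{\mcZ}\to 0$. The vanishing of the higher sheaf $\mathcal{E}xt$ (which holds because $\msO_{\mcZ}$ is maximal Cohen--Macaulay over $\msO_{\mcZ'}$, both being fiberwise zero-dimensional and $T$-flat) gives a short exact sequence
\[
0\to\omega_{\mcZ/T}\to\omega_{\mcZ'/T}\to\msI_{\mcZ\subseteq\mcZ'}^{\vee}\to 0.
\]
Locally, with $\msO_{\mcZ'}=R'$ and $\msI_{\mcZ\subseteq\mcZ'}=J$, Gorenstein duality identifies $\msI_{\mcZ\subseteq\mcZ'}^{\vee}$ with $R'/\Ann_{R'}(J)$, and this description glues to a closed subscheme $\mcZ''\subseteq\mcZ'$ with $\msO_{\mcZ''}\simeq\msI_{\mcZ\subseteq\mcZ'}^{\vee}$. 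Since the higher $\mathcal{E}xt$ vanishing is preserved under pullback along $T'\to T$, the construction commutes with base change, so $L_{d,d'}$ defines a natural transformation of functors of points, hence a morphism of schemes.

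Next I would verify that $L_{d,d'}$ lands in $\nestedlcilocus{d'-d}{d'}$ and deduce involutivity. The exact sequence gives $\deg_T\mcZ''=d'-d$; since $\omega_{\mcZ'/T}$ and $\omega_{\mcZ/T}$ are $T$-flat, so is $\mcZ''/T$, and $\mcZ'$ being unchanged places $[\mcZ''\subseteq\mcZ']$ in the lci nested locus. For the involution, applying $L_{d'-d,d'}$ to $[\mcZ''\subseteq\mcZ']$ yields on stalks $R'/\Ann_{R'}(\Ann_{R'}(J))$, which equals $R'/J=\msO_{\mcZ}$ by the double-annihilator identity of \Cref{ref:mainlinkage:lem} applied to a local complete-intersection slice of $\mcZ'$. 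Both $L_{d'-d,d'}(L_{d,d'}([\mcZ\subseteq\mcZ']))$ and $[\mcZ\subseteq\mcZ']$ are then $T$-flat quotients of $\msO_{\mcZ'}$ that agree on every fiber, so by Nakayama they coincide as closed subschemes. This gives $L_{d'-d,d'}\circ L_{d,d'}=\mathrm{id}$, and by symmetry $L_{d,d'}$ is an isomorphism with inverse $L_{d'-d,d'}$.

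The hardest part will be the careful bookkeeping of the relative dualizing sheaf, specifically, the vanishing of $\mathcal{E}xt^i_{\msO_{\mcZ'}}(\msO_{\mcZ},\omega_{\mcZ'/T})$ for $i\geq 1$ and its stability under base change. Both reduce, via flatness of $\mcZ\to T$ and Nakayama, to the fiberwise statement that $\msO_{\mcZ_t}$ is a maximal Cohen--Macaulay module over the Gorenstein ring $\msO_{\mcZ'_t}$, which is classical; threading this through the proof without an explicit invocation of relative Grothendieck--Serre duality will require some care.
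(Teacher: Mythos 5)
Your construction is in substance the same as the paper's --- relative linkage via duality over the base --- but it routes through heavier machinery than is needed. The paper's proof observes that $\msO_{\mcZ}$, $\msO_{\mcZ'}$, and hence $\msI_{\mcZ\subseteq\mcZ'}$ are all locally free $\msO_B$-modules, so the plain $\msO_B$-linear dual $\Hom_{\msO_B}(-,\msO_B)$ is exact on the defining sequence and commutes with base change for free; the only real content is that $\msI_{\mcZ\subseteq\mcZ'}^{\vee}$ is a \emph{cyclic} $\msO_{B\times X}$-module, which holds because it is a quotient of $\msO_{\mcZ'}^{\vee}=\omega_{\mcZ'/B}$, itself cyclic by Gorensteinness of the lci fibers plus Nakayama. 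Since $\mcZ'\to T$ is finite flat, your $\mathcal{H}om_{\msO_{\mcZ'}}(-,\omega_{\mcZ'/T})$ coincides with $\Hom_{\msO_T}(-,\msO_T)$ by duality for finite morphisms, so the $\mathcal{E}xt$-vanishing and its base-change stability, which you single out as the hardest part, evaporate: one only needs exactness of $\msO_T$-dualization on $T$-flat coherent sheaves. Your local identification $\msI^{\vee}\simeq R'/\Ann_{R'}(J)$ and the subscheme $V(\Ann(\msI^{\vee}))$ is exactly how the paper realizes $\mcZ''$.

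One step needs tightening. For involutivity you apply the double-annihilator identity of \Cref{ref:mainlinkage:lem} ``on stalks'' and then assert that two $T$-flat quotients of $\msO_{\mcZ'}$ agreeing on every fiber coincide by Nakayama. As stated this is not enough: the lemma is an absolute statement over a field, so it only gives fiberwise equality, and two flat families with equal fibers need not coincide (e.g.\ $V(x)$ and $V(x-\eps)$ in $\AA^1\times\Spec\kk[\eps]$ are both flat of degree one with the same closed fiber). The repair is immediate, though: one always has the containment $\msI\subseteq\Ann_{\msO_{\mcZ'}}(\Ann_{\msO_{\mcZ'}}(\msI))$, hence a surjection $\msO_{\mcZ}\onto\msO_{\mcZ'''}$ of $T$-flat sheaves of the same fiber degree; its kernel is fiberwise zero by flatness and the degree count, hence zero by Nakayama. (The paper instead deduces involutivity from double $\msO_B$-duality being the identity.) With that repair your argument is complete.
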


\begin{proof}
    Fix any base scheme $B$ and a family $\mcZ \subseteq \mcZ'$ corresponding
    to a $B$-point of $\nestedlcilocus{d}{d'}$. The map $\mcZ'\to B$ is
    affine; we will identify sheaves on $\mcZ'$ with sheaves of
    $\mcO_{\mcZ'}$-algebras on $B$.
The structure sheaves $\msO_{\mcZ}$ and $\msO_{\mcZ'}$ are locally free $\msO_B$-modules of rank $d$ and $d'$, respectively.
The ideal sheaf $\msI_{\mcZ \subseteq \mcZ'}$ is the kernel of the surjection $\msO_{\mcZ'}\onto \msO_{\mcZ}$, so it is also a locally free $\msO_B$-module of rank $d'-d$. This implies that the sheaf
    \[
        \msI_{\mcZ\subseteq \mcZ'}^{\vee} = \Hom_{\mcO_B}\left( \msI_{\mcZ\subseteq
        \mcZ'}, \mcO_B \right)
    \]
    commutes with base changes $B' \to B$.
    Consider $\omega_{\mcZ'} = \Hom_{\mcO_B}(\mcO_{\mcZ'}, \mcO_B)$, $\omega_{\mcZ} =
    \Hom_{\mcO_B}(\mcO_{\mcZ}, \mcO_B)$ with their usual $\mcO_{\mcZ'}$ and
    $\mcO_{\mcZ}$-module structures. Since $\mcZ'\to B$ has Gorenstein fibers,
    the $\mcZ'$-module $\omega_{\mcZ'}$ is invertible.
    We have an exact sequence
    \begin{equation}\label{eq:surj}
        0 \to \omega_{\mcZ} \to \omega_{\mcZ'} \to \msI_{\mcZ\subseteq
        \mcZ'}^{\vee} \to 0.
    \end{equation}
Let $\mcZ'' = V(\Ann_{\mcO_{B\times X}}(\msI_{\mcZ\subseteq
\mcZ'}^{\vee}))\subseteq B \times X$. Locally on $B$, the  $\mcZ'$-module
$\omega_{\mcZ}$ trivializes. On each open $U\subseteq B$ trivializing it, the sequence~\eqref{eq:surj} becomes
\begin{equation}\label{eq:surjDual}
    0 \to \omega_{\mcZ}|_{U}\to \mcO_{\mcZ'} \to \mcO_{\mcZ''} \to 0,
\end{equation}
which shows that locally on $B$, the $\mcO_B$-modules $\mcO_{\mcZ''}$ and $\msI_{\mcZ\subseteq
\mcZ'}^{\vee}$ are isomorphic. This implies that $\mcZ''\to B$ is finite flat of
degree $d'-d$, so the map $L_{d, d'}$ is well-defined.

To show that $L_{d'-d, d'} \circ L_{d, d'}$ is the identity, we can work
locally on $B$, so we restrict to $U$. Performing the above construction starting
from~\eqref{eq:surjDual}, we obtain a closed subscheme $\mcZ'''\subseteq
U\times X$ which over $U$ is given by the annihilator of
$\omega_{\mcZ}|_U^{\vee}$. We see that $\mcZ''' = \mcZ|_U$.
Thus the composition $L_{d'-d, d'} \circ L_{d, d'}$ is the identity for any
$d$. In particular, $L_{d,d'}$ is an isomorphism.
\end{proof}

The result above has two important consequences regarding how the singularities of the Hilbert scheme change under linkage. The first is that linkage preserves the smoothable tangent excess, as defined in \Cref{def_tangent_excess}.

The following result is folklore \cite{BuchweitzThesis,BuchweitzUlrich}, we include the proof for completeness.

\begin{thm} \label{theorem_excess_dim} 
Let $X$ be a smooth irreducible $n$-dimensional $\kk$-scheme.
Let $[Z] \in \Hilb^d(X)$ and $[Z''] \in \Hilb^{d''}(X)$ and assume that $Z,Z''$ are smoothable. 
If $Z$ is linked to $Z''$, then 
$$
\dim_{\kk} T_{[Z]} \Hilb^d(X) - d\cdot n  = \dim_{\kk} T_{[Z'']}
\Hilb^{d''}(X) - d'' \cdot n.
$$
\end{thm}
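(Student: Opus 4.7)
The plan is to combine the isomorphism $L_{d,d'}$ of \Cref{linkage_in_families} with the smoothness of the forgetful map $p\colon \nestedlcilocus{d}{d'}\to \Hilb^d(X)$ established in \Cref{smooth_equivalence_lci_superscheme}. Since $Z$ is linked to $Z''$, there is a complete intersection subscheme $Z'$ of degree $d':=d+d''$ containing both of them, so both $[Z\subseteq Z']\in \nestedlcilocus{d}{d'}$ and $[Z''\subseteq Z']\in \nestedlcilocus{d''}{d'}$. Using \Cref{ref:mainlinkage:lem} to identify $\omega_Z\simeq \msI_{Z''\subseteq Z'}$, together with the dualisation of the sequence $0\to \msI_{Z\subseteq Z'}\to \mcO_{Z'}\to \mcO_Z\to 0$ (and the fact that $Z'$ is Gorenstein, so $\omega_{Z'}\simeq \mcO_{Z'}$), one gets $\msI_{Z\subseteq Z'}^{\vee}\simeq \mcO_{Z'}/\omega_Z\simeq \mcO_{Z''}$, which shows that $L_{d,d'}$ sends $[Z\subseteq Z']$ to $[Z''\subseteq Z']$. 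The smoothability hypothesis places these two points in the preimages $\mathcal{Y}_1:=p^{-1}(\Hilb^{d,\sm}(X))$ and $\mathcal{Y}_2:=(p'')^{-1}(\Hilb^{d'',\sm}(X))$ respectively.

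Next I would show that $L_{d,d'}$ restricts to an isomorphism $L\colon \mathcal{Y}_1\to \mathcal{Y}_2$. By \Cref{smooth_equivalence_lci_superscheme}, both $\mathcal{Y}_i$ are the closures of their loci of tuples of reduced points, and $L_{d,d'}$ preserves this locus because the linked scheme of a sub-tuple of a reduced $Z'$ is just the complementary sub-tuple. The locus of tuples in $\nestedlcilocus{d}{d'}$ identifies with the open subset $\{(Z,Z'')\in \Hilb^{d,\sm}(X)\times \Hilb^{d'',\sm}(X)\ |\ Z\cap Z''=\emptyset\}$, so it is irreducible, and hence $\mathcal{Y}_1$ and $\mathcal{Y}_2$ are irreducible as well. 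The restricted morphisms $p|_{\mathcal{Y}_1}$ and $p''|_{\mathcal{Y}_2}$ remain smooth (by base change from $p$ and $p''$) and, having irreducible source, have constant relative dimensions, equal to $d''\cdot n$ and $d\cdot n$ respectively, as seen at a generic reduced tuple where the extra points move freely in $X$.

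To conclude, recall that for a smooth morphism $f\colon Y\to W$ of constant relative dimension $r$ and any point $y\in Y$ one has $\dim_{\kk}T_y Y = \dim_{\kk}T_{f(y)}W + r$, because étale-locally $f$ looks like the projection $\AA^r\times W\to W$. Applying this at $[Z\subseteq Z']$ and at $[Z''\subseteq Z']$ yields
\begin{align*}
\dim_{\kk} T_{[Z\subseteq Z']}\mathcal{Y}_1 &= \dim_{\kk} T_{[Z]}\Hilb^d(X) + d''\cdot n,\\
\dim_{\kk} T_{[Z''\subseteq Z']}\mathcal{Y}_2 &= \dim_{\kk} T_{[Z'']}\Hilb^{d''}(X) + d\cdot n.
\end{align*}
Since $L$ identifies these two tangent spaces, the two right-hand sides agree, and rearranging gives the claim. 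The main obstacle is essentially bookkeeping: correctly aligning the abstract duality operation $\msI_{Z\subseteq Z'}^{\vee}\leadsto \mcO_{Z''}$ from \Cref{linkage_in_families} with the classical notion of linkage, which is handled cleanly by \Cref{ref:mainlinkage:lem}.
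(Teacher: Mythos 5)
Your overall strategy coincides with the paper's: transport $[Z\subseteq Z']$ to $[Z''\subseteq Z']$ via the isomorphism $L_{d,d'}$ of \Cref{linkage_in_families}, and compare tangent spaces through the smooth projections of \Cref{smooth_equivalence_lci_superscheme}. Your explicit verification, via \Cref{ref:mainlinkage:lem} and duality of the sequence $0\to \msI_{Z\subseteq Z'}\to \mcO_{Z'}\to\mcO_Z\to 0$, that $L_{d,d'}$ really sends $[Z\subseteq Z']$ to $[Z''\subseteq Z']$ is correct and is a detail the paper leaves implicit.

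There is, however, a genuine error in the last step. You apply the tangent-space formula for a smooth morphism to the \emph{restricted} map $p|_{\mathcal{Y}_1}\colon \mathcal{Y}_1\to \Hilb^{d,\sm}(X)$, whose target is the smoothable component, but then write $T_{[Z]}\Hilb^d(X)$ on the right-hand side. What that formula gives is $\dim_{\kk}T_{[Z\subseteq Z']}\mathcal{Y}_1=\dim_{\kk}T_{[Z]}\Hilb^{d,\sm}(X)+d''n$, and $T_{[Z]}\Hilb^{d,\sm}(X)$ can be a proper subspace of $T_{[Z]}\Hilb^{d}(X)$ when $[Z]$ lies on more than one irreducible component. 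As written, your argument proves equality of the excesses computed with the tangent spaces of the smoothable components, not of the full Hilbert schemes, which is what the statement asserts and what is needed later (e.g.\ in \Cref{corollary_linkage_obstruction}, where $T(I)=\Hom_S(I,S/I)$ is the full tangent space). The fix is to not restrict: apply the formula to the unrestricted smooth morphism $p\colon\nestedlcilocus{d}{d'}\to\Hilb^d(X)$. Its relative dimension is locally constant on the source, hence constant on the irreducible (so connected) closed subset given by the closure of the locus of tuples, which contains $[Z\subseteq Z']$ precisely because $Z$ is smoothable; evaluating at a generic reduced tuple gives relative dimension $d''n$. This yields $\dim_{\kk}T_{[Z\subseteq Z']}\nestedlcilocus{d}{d'}=\dim_{\kk}T_{[Z]}\Hilb^d(X)+d''n$ with the full tangent spaces on both sides, and the remainder of your argument (the isomorphism $L_{d,d'}$ of the ambient nested lci loci, plus the symmetric computation for $p''$) goes through. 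With this fix you also no longer need to argue that $L_{d,d'}$ restricts to an isomorphism of the subschemes $\mathcal{Y}_1\to\mathcal{Y}_2$, a point which as stated would additionally require observing that these scheme-theoretic preimages are reduced.
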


\begin{proof}
    Assume $Z$ is linked to $Z''$ by a complete intersection $Z' := V(\ua)$.
    Consider the point $[Z \subseteq Z']\in \nestedlcilocus{d}{d'}$ and the projection map $p\colon
    \nestedlcilocus{d}{d'}\to \Hilb^d(X)$.
    By \Cref{smooth_equivalence_lci_superscheme}, the smoothable tangent excess at $Z$
    is equal to the difference $\delta_{Z\subseteq Z'}$ between the dimension of the tangent space at $[Z \subseteq Z']$ and the
    dimension of the locus of tuples of points in $\nestedlcilocus{d}{d'}$.

    \Cref{linkage_in_families} yields an isomorphism of schemes
    $\nestedlcilocus{d}{d'}$ and $\nestedlcilocus{d''}{d'}$ which maps
    $[Z\subseteq Z']$ to $[Z''\subseteq Z']$. This isomorphism, by
    definition, is an isomorphism on the locus of tuples of points. It follows that $\delta_{[Z\subseteq
    Z']}$ and $\delta_{[Z''\subseteq Z']}$ are equal. 
\end{proof}

Similarly,
another consequence is the fact that the singularities at points on the Hilbert scheme, 
whose corresponding ideals are linked, 
are smoothly equivalent.

\begin{definition}[\cite{Vakil_MurphyLaw, Jelisiejew__Pathologies}]\label{ref:smoothlyEquivalent:def}
Two pointed schemes $(X, x)$, $(Y, y)$ are \textbf{smoothly equivalent} if there
exists a third pointed scheme $(Z, z)$ with {smooth} maps $(Z, z)\to (X,
x)$, $(Z, z)\to (Y, y)$.
\end{definition}

 Intuitively, smoothly equivalent points have the same 
  geometry up to  free parameters.

\begin{thm} \label{theorem_smoth_equiv} 
Let $[Z] \in \Hilb^d(X)$ and $[Z''] \in \Hilb^{d''}(X)$. 
If $Z$ is linked to $Z''$,
then the singularity at $[Z]$ is smoothly equivalent to the singularity at $[Z'']$. 
\end{thm}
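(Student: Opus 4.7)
The plan is to exhibit the nested Hilbert scheme $\nestedlcilocus{d}{d'}$ as a common smooth cover witnessing the smooth equivalence in the sense of \Cref{ref:smoothlyEquivalent:def}. The strategy mirrors the proof of \Cref{theorem_excess_dim}, but extracts a sharper geometric conclusion rather than a numerical one.

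First I would set up the configuration. Since $Z$ is linked to $Z''$, there exists a complete intersection $Z' = V(\ua) \subseteq X$ of degree $d' := d + d''$ containing both, yielding a $\kk$-point $[Z \subseteq Z'] \in \nestedlcilocus{d}{d'}$. By \Cref{smooth_equivalence_lci_superscheme}, the projection $p_1 \colon \nestedlcilocus{d}{d'} \to \Hilb^d(X)$ sending $[Z \subseteq Z']$ to $[Z]$ is smooth. \Cref{linkage_in_families} furnishes an isomorphism $L_{d, d'} \colon \nestedlcilocus{d}{d'} \to \nestedlcilocus{d''}{d'}$; composing with the smooth projection $p_2 \colon \nestedlcilocus{d''}{d'} \to \Hilb^{d''}(X)$ (again by \Cref{smooth_equivalence_lci_superscheme}) gives a second smooth morphism $p_2 \circ L_{d, d'}$ out of the common cover. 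The pair $(p_1,\, p_2 \circ L_{d, d'})$ then supplies the two smooth morphisms required by \Cref{ref:smoothlyEquivalent:def}.

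The only nontrivial verification, and the main obstacle in the argument, is to check that $L_{d, d'}$ actually sends the basepoint $[Z \subseteq Z']$ to $[Z'' \subseteq Z']$; without this, the composition $p_2 \circ L_{d, d'}$ would not be based at the prescribed point $[Z'']$. Unwinding the construction in \Cref{linkage_in_families}, this amounts to the $\msO_{Z'}$-module identification $\msO_{Z''} \simeq \msI_{Z \subseteq Z'}^{\vee}$. I would derive this by dualising the short exact sequence $0 \to \msI_{Z \subseteq Z'} \to \msO_{Z'} \to \msO_Z \to 0$ to obtain $0 \to \omega_{\msO_Z} \to \omega_{\msO_{Z'}} \to \msI_{Z \subseteq Z'}^{\vee} \to 0$. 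Since $Z'$ is a complete intersection, hence Gorenstein, we have $\omega_{\msO_{Z'}} \simeq \msO_{Z'}$, and \Cref{ref:mainlinkage:lem} identifies the image of $\omega_{\msO_Z}$ inside $\msO_{Z'}$ with the ideal $(\ua : \widetilde{I})/(\ua)$, where $\widetilde{I}$ denotes the ideal of $Z$ in $X$. The resulting quotient is precisely $\msO_{Z''}$, which closes the argument.
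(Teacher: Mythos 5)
Your proof is correct and follows the same route as the paper, which simply cites \Cref{smooth_equivalence_lci_superscheme} and \Cref{linkage_in_families}: the pair of smooth maps $(p_1,\, p_2 \circ L_{d,d'})$ out of $\nestedlcilocus{d}{d'}$ is exactly the intended witness of smooth equivalence in the sense of \Cref{ref:smoothlyEquivalent:def}. Your verification that $L_{d,d'}$ carries $[Z \subseteq Z']$ to $[Z'' \subseteq Z']$ --- by dualizing the ideal-sheaf sequence and invoking \Cref{ref:mainlinkage:lem} to identify the image of $\omega_{Z}$ in $\msO_{Z'}$ with $(\ua : \widetilde{I})/(\ua)$ --- correctly supplies the one detail the paper's ``follows immediately'' leaves implicit.
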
 
\begin{proof} This follows immediately from \Cref{smooth_equivalence_lci_superscheme} and \Cref{linkage_in_families}.
\end{proof}

In \cite[p. 389]{Huneke_Ulrich__Monomials}, the authors ask when a zero-dimensional ideal $I \subseteq S = \kk[x_1, \ldots, x_n]$ belongs to the linkage class of a monomial ideal.
Since the licci class is the only linkage class for $n=2$, the first interesting case of this question occurs when $n=3$.
As a byproduct of our work, we obtain a method to explicitly produce 
many ideals that do not even belong to the linkage class of a homogeneous ideal.

\begin{cor} \label{corollary_linkage_obstruction} 
Let $I \subseteq S = \kk[x,y,z]$  be an ideal with $\dim_\kk(S/I) =d$.
\begin{enumerate}
    \item If $S/I$ is not smoothable, then $I$ is not in the linkage class of
        any monomial ideal.
    \item If $\dim_{\kk} T(I) \not \equiv d \bmod 2$, then $I$ is not in the linkage
        class of any ideal homogeneous with respect to the standard grading.
\end{enumerate}
\end{cor}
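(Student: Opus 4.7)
The plan is to prove both parts by contrapositive, exploiting the two preservation principles for linkage developed in this subsection.

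The common backbone combines \Cref{smooth_equivalence_lci_superscheme} and \Cref{linkage_in_families}: the isomorphism $L_{d,d'}$ of the latter fixes the outer subscheme $Z' \supseteq Z$, so it preserves the locus of tuples of points (an open condition on $Z'$) and hence its closure. By \Cref{smooth_equivalence_lci_superscheme}, that closure is exactly the preimage of the smoothable component of $\Hilb^d(\AA^3)$ under the forgetful projection $p$. Therefore, for any pair of directly linked ideals $I_1, I_2 \subseteq S$, the points $[V(I_1)]$ and $[V(I_2)]$ are simultaneously smoothable or non-smoothable; iterating along a chain of links shows that smoothability is constant on an entire linkage class. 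This immediately settles part~(1): if $I$ were in the linkage class of a monomial ideal $J$, then $S/J$ would be smoothable by \cite{CEVV}, forcing $S/I$ to be smoothable.

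For part~(2), suppose $I$ is linked to a homogeneous ideal $J$ of colength $d_J$. The paragraph above lets us pass to the case where $S/I$ and $S/J$ are simultaneously smoothable; granting this, \Cref{theorem_excess_dim} applied successively to each link in the chain yields
\[
    \dim_\kk T(I) - 3d \;=\; \dim_\kk T(J) - 3 d_J,
\]
and reducing modulo $2$ produces $\dim_\kk T(I) - d \equiv \dim_\kk T(J) - d_J \pmod{2}$. Since $J$ is homogeneous, $[S/J]$ is a fixed point of the $\mathbb{G}_m$-action on $\Hilb(\AA^3)$, and the parity identity $\dim_\kk T(J) \equiv d_J \pmod{2}$ is known for such fixed points, as a consequence of the $\mathbb{G}_m$-equivariance of the symmetric Behrend--Fantechi obstruction theory. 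Substituting gives $\dim_\kk T(I) \equiv d \pmod{2}$, contradicting the hypothesis.

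The main obstacle is the parity fact for homogeneous points, which enters as an external black-box; the rest is essentially formal bookkeeping on top of the preservation principles assembled in \Cref{SubsectionLinkageSingularities}. A minor technical point is verifying the smoothability hypothesis of \Cref{theorem_excess_dim} for the homogeneous partner $J$, which is taken care of by the linkage-invariance of smoothability established in the first paragraph.
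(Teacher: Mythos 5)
Your argument is correct and follows essentially the same route as the paper: part (1) is the linkage-invariance of smoothability (via \Cref{smooth_equivalence_lci_superscheme} and \Cref{linkage_in_families}) combined with \cite{CEVV}, and part (2) transfers the tangent excess along the chain of links via \Cref{theorem_excess_dim} and then invokes the parity identity $\dim_\kk T(J) \equiv \dim_\kk(S/J) \bmod 2$ for homogeneous ideals. The only cosmetic difference is that the paper cites this parity identity directly from \cite{RS22} rather than deriving it from equivariance of the symmetric obstruction theory; it is the same external input.
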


\begin{proof}
It is known that smoothability is preserved under linkage.
This also follows from \Cref{linkage_in_families}, 
since \Cref{smooth_equivalence_lci_superscheme} shows that the smoothable component is
exactly the image of closure of the locus of tuples of points.
Since  monomial ideals are smoothable \cite[Proposition~4.10]{CEVV},
 it follows that a nonsmoothable algebra $S/I$
cannot be in the linkage class of a monomial ideal.

By \cite[Theorem 1]{Ramkumar_Sammartano_parity},  
homogeneous ideals $J \subseteq S$  
satisfy $\dim_{\kk} T(J)  \equiv \dim_{\kk} (S/J) \bmod 2$.
It follows by \Cref{theorem_excess_dim} that,
if $\dim_{\kk} T(I) \not \equiv d \bmod 2$, then $I$ cannot be in the linkage class
of a homogeneous ideal.
\end{proof}

Thanks to the work~\cite{Graffeo_Giovenzana_Lella}, ideals with odd smoothable tangent excess are known.

\begin{example} \label{example_GGGL} Consider the binomial ideal
\[
I = (x+(y,z)^2)^2 + (y^3-xz) = (x^2,xy^2,xyz,,xz^2,y^2z^2,yz^3,z^4,y^3-xz).
\]
It is shown in \cite{Graffeo_Giovenzana_Lella} that $\dim_\kk(S/I) = 12$, 
while $\dim_{\kk} T(I) = 45$. 
It follows from \Cref{corollary_linkage_obstruction} that $I$ is not in the linkage class of  a monomial ideal. 
The ideal $I$ arises from the monomial
ideal $J = (x+(y,z)^2)^2$ by adding the binomial $y^3-xz$, which lies in the socle of $S/J$. 
In fact, dividing $S/J$ by a \emph{general} socle element
yields a quotient $S/I'$ with $\dim_{\kk} T(I') = 45$, 
see~\cite[\S3]{Graffeo_Giovenzana_Lella}.
As explained by Giovenzana-Giovenzana-Graffeo-Lella (private communication), 
similar constructions yield many more examples of monomial ideals with odd smoothable tangent excess,  see~\cite{Kool_Jelisiejew_Schmierman} for another example.
\end{example}

\subsection{Singular monomial ideals}\label{SubsectionSingularMonomial}
Our next goal is to investigate monomial ideals that give rise to the mildest possible singularities on the Hilbert scheme, 
in the sense of \Cref{ThmSingularMonomial}.
In particular, in this subsection we will study  their linkage classes and tangent spaces.

One of our goals is to understand monomial ideals $I\subseteq S = \kk[x, y, z]$ with given tangent dimension,
such as $3d+6$. For this, we need to construct tangent vectors. 
    A homomorphism $\varphi\colon I\to S/I$ is a \textbf{socle map} if its
    image is contained in $\soc(S/I)$.
A socle map yields a map $I/\mm I \to S/I$, where $\mm := (x, y, z)$, 
and,
conversely, any $\kk$-linear map $I/\mm I\to \soc(S/I)$ yields a socle map.

\begin{example} \label{example_tripod_1} A \textbf{tripod} is an ideal of the form $I^{\text{tri}}(a,b,c) := (x^a,y^b,z^c,xy,xz,yz)$ for some $a,b,c\geq 2$.    
The associated staircase~\cite[p. 46]{MS05} explains the choice of terminology: 
$$ 
\underset{\displaystyle(x^2,y^2,z^2,xy,xz,yz)}{\monomialIdealPicture{{2,1},{1,0}}}
\qquad \qquad \qquad \qquad \qquad
\underset{\displaystyle{(x^3,y^5,z^4,xy,xz,yz)}}{\monomialIdealPicture{{4,1,1},{1,0,0},{1,0,0},{1,0,0},{1,0,0}}}
$$
Notice that $\soc(S/I^{\tri}(a,b,c))$ is spanned by
$\{x^{a-1},y^{b-1},z^{c-1}\}$, and this triple is a singularizing one.
Geometrically, these correspond to the three corners, with each corner is
maximal in one of the directions.

Since $I^{\tri}(a,b,c)$ has six minimal generators,
 there are $3\cdot 6 = 18$ linearly independent socle maps.
Among the socle maps, the doubly-negative (see \Cref{ref:doublyNegative:def})
ones are of the form $\varphi(xy) = z^{a-1}$, $\varphi(xz) = y^{b-1}$ and
$\varphi(yz) = x^{a-1}$.
 It is possible to show that  these are the only doubly-negative tangents of
 $I^{\tri}(a,b,c)$.
Thus, by \Cref{theorem_smooth_monomials} we have $\dim_{\kk} T(I^{\tri}(a,b,c)) =  3d+6$. 
\end{example}

Among monomial ideals, an important special class is that of strongly stable ideals. 
A monomial ideal $I\subseteq \kk[x_1,\dots,x_n]$ is said to be \textbf{strongly stable} if  for every minimal monomial generator $m\in I$ and for every $x_j$ dividing $m$, we have $\frac{x_i}{x_j}m \in I$ for all $i < j$.
Strongly stable ideals have rich combinatorial structure, and they are  Borel-fixed.
If the field $\kk$ has characteristic zero, 
being strongly stable and being Borel-fixed are equivalent conditions
\cite[Proposition 2.3]{MS05}.

We now describe the socle monomials of a cofinite strongly stable ideal.

\begin{prop}\label{ConstructSocle} Let $I \subseteq S$ be a cofinite strongly stable ideal.
Let $m' \in S/I$ be a non-zero monomial and let $\gamma$ be maximal so that $m := z^\gamma m' \in S/I$ is non-zero.
Then, $m$ is a socle monomial.
Moreover, all socle monomials are of this form.
\end{prop}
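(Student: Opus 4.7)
The plan is to use the contrapositive form of strong stability for elements of the staircase $E_I = S/I$. More precisely, strong stability of $I$ means that if $m \in I$ is divisible by $x_j$ and $i < j$, then $\tfrac{x_i}{x_j}m \in I$. Taking contrapositives in the ordering $x < y < z$, one gets the following ``raising'' property of the staircase: if $m \in E_I$ and $x \mid m$, then $\tfrac{y}{x}m \in E_I$ and $\tfrac{z}{x}m \in E_I$; similarly, if $m \in E_I$ and $y \mid m$, then $\tfrac{z}{y}m \in E_I$. I would record this rephrasing of strong stability as the opening step, since the rest of the argument is really a bookkeeping application of it.

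For the first assertion, given $m' \in E_I$, let $\gamma$ be maximal with $m := z^\gamma m' \in E_I$. Then $zm \in I$ by maximality of $\gamma$, so it suffices to show $xm, ym \in I$. Suppose toward contradiction that $ym \in E_I$. Since $y \mid ym$, the raising property gives $\tfrac{z}{y}(ym) = zm \in E_I$, contradicting $zm \in I$. Similarly, if $xm \in E_I$, then $x \mid xm$ allows raising along $z/x$ to conclude $\tfrac{z}{x}(xm) = zm \in E_I$, again a contradiction. Hence $xm, ym, zm \in I$ and $m \in \soc(S/I)$.

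For the ``moreover'' part, any socle monomial $m$ itself arises from the construction by taking $m' := m$ and $\gamma := 0$: then $z^0 m = m \in E_I$ while $zm \in I$ by the socle condition, so $\gamma = 0$ is maximal. This is essentially tautological once the first half is in place, and is included mostly to emphasize that the procedure in the statement is an exhaustive description of the socle, not just a sufficient condition for lying in it.

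The only real content here is the raising lemma for $E_I$, and the main place to be careful is the direction of the inequality $i<j$ in the strong stability axiom: we are always raising a smaller variable into $z$, never the reverse, which is exactly why the argument is special to the top variable $z$ and does not hold for arbitrary monomial ideals. I do not anticipate any further obstacle.
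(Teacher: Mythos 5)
Your proof is correct and is essentially the paper's argument: the paper applies strong stability directly to $zm \in I$ to conclude $xm, ym \in I$, while you phrase the same single application of the axiom contrapositively as a ``raising'' property of the staircase, and the ``moreover'' direction is the same tautological observation in both. No gaps.
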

\begin{proof} Given $m$ as in the statement, the maximality of $\gamma$ implies that $zm \in I$. Since $I$ is strongly stable,  we have that $xm,ym \in I$ and, in particular, $m\in \soc(S/I$).
Working backwards, we  see that all socle monomials arise in this way.
\end{proof}

We move on to construct tangent vectors of a strongly stable  ideal $I
\subseteq S = \kk[x, y, z]$. We observe that thanks to stability, the ideal $I$
has some particular generators. For example, there is always a generator of
the form $xy^b$ for some $b\geq 0$. Indeed, since $S/I$ is
finite-dimensional, there is a minimal generator of the form $y^{e}$ with $e\geq 1$.
By stability, there is an element of $I$ of the form $xy^{e-1}$. This implies
that there is a generator of this form as well.

We are  ready to provide the ``only if'' part of the classification of
strongly stable ideals in $\Hilb^d(\mathbb{A}^3)$ with tangent space dimension $3d+6$.
Let $a,b,c \in \N$ be integers such that  $1 \leq a \leq b \leq c$.
Define the ideal
\[
J(a,b,c) := (x^2,xy,y^2,xz^a,yz^b,z^{c+1}).
\]
Observe that it is a strongly stable ideal of codegree $a+b+c+1$.
\begin{prop}\label{ref:stronglyStableClassification:prop}
Let $I\subseteq S$ be a strongly stable ideal such that $\dim_\kk(S/I)=d$.
If $\dim_{\kk} T(I) = 3d+6$, then $I = J(a,b,c) = (x^2,xy, y^2,xz^a,yz^b,z^{c+1})$ with $a\leq b \leq c$ and either  $a = 1$ or $b=c$.
\end{prop}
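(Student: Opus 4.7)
The plan is to translate the dimensional hypothesis into a structural classification. First, by \Cref{theorem_smooth_monomials}, having $\dim_{\kk} T(I) = 3d+6$ is equivalent to $\dim_{\kk} T_\nnp(I) + \dim_{\kk} T_\npn(I) + \dim_{\kk} T_\pnn(I) = 3$, and since $\dim_{\kk} T(I) > 3d$ the ideal $I$ admits a singularizing triple by \Cref{ThmSingularMonomial}, ensuring each of the three summands is nonzero. Hence each equals exactly $1$.

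Next I establish the structure of $I$. First, no linear form lies in $I$: if $\ell \in I$ is linear, then $[I]$ factors through an embedding $\Hilb^d(\AA^2) \hookrightarrow \Hilb^d(\AA^3)$, making $[I]$ a smooth point of tangent excess $0$, a contradiction. Next, I claim that $y^2 \in I$, whence strong stability applied to $y^2$ yields $xy \in I$ and then $x^2 \in I$. If instead the smallest pure $y$-power in $I$ were $y^p$ with $p \geq 3$, then strong stability produces a minimal generator of the form $x^iy^{p-i}$ with $i, p-i \geq 1$; I would then exhibit two linearly independent doubly-negative tangents in $T_\nnp(I)$ by mapping this generator to distinct socle elements (for instance $z^k$ versus $yz^k$ for appropriate $k$), contradicting Step 1. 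With $x^2, xy, y^2 \in I$ established, any other minimal generator $m$ satisfies $m_1 + m_2 \leq 1$, so $m$ is of the form $z^k$, $xz^k$, or $yz^k$; taking the smallest exponent in each family produces generators $z^{c+1}$, $xz^a$, $yz^b$. Strong stability applied to $yz^b$ gives $xz^b \in I$, forcing $a \leq b$, while strong stability applied to $z^{c+1}$ gives $yz^c \in I$, forcing $b \leq c$. Hence $I = J(a,b,c)$ with $a \leq b \leq c$.

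Finally, I directly compute $\dim_{\kk} T_\nnp(J(a,b,c))$. A tangent $\varphi$ of weight $(-i, -j, k)$ with $i, j \geq 1$ can act nontrivially only on generators of $I$ having both $x$- and $y$-degree at least $1$, and among the generators of $J(a,b,c)$ only $xy$ qualifies. Thus $\varphi$ is determined by $\varphi(xy) = z^k$ for some integer $k$, with the remaining generators mapping to zero. The syzygies $y \cdot xy = x \cdot y^2$, $x \cdot xy = y \cdot x^2$, $z^a \cdot xy = y \cdot xz^a$, and $z^b \cdot xy = x \cdot yz^b$ translate into the conditions $yz^k, xz^k, z^{a+k}, z^{b+k} \in J(a,b,c)$, reducing to $k \geq b$ and $k \geq c+1-a$; combined with $k \leq c$ (so that $z^k$ is nonzero in $S/J$), this yields $\dim_{\kk} T_\nnp(J(a,b,c)) = \min(a,\, c-b+1)$. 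From Step 1 this must equal $1$, forcing $a = 1$ or $b = c$.

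The main obstacle is the second step, specifically the construction of two linearly independent doubly-negative tangents when the bottom of the staircase contains a generator $x^iy^{p-i}$ with $p \geq 3$ and $i, p-i \geq 1$. The candidate images must be compatible with all syzygies involving this generator, and verifying this compatibility requires a careful case analysis; I would leverage \Cref{proposition_basis} to interpret tangent vectors as bounded connected components of $(I+\bfa) \setminus I$, and \Cref{ConstructSocle} to enumerate the relevant socle elements.
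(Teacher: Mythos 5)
Your overall architecture matches the paper's: reduce the hypothesis to $\dim_{\kk} T_{\nnp}(I)=\dim_{\kk} T_{\npn}(I)=\dim_{\kk} T_{\pnn}(I)=1$, deduce from structural constraints that $I=J(a,b,c)$, and then force $a=1$ or $b=c$ by exhibiting extra doubly-negative tangents otherwise. Your Step 1 is valid, and your Step 3 is correct: the closed-form count $\dim_{\kk} T_{\nnp}(J(a,b,c))=\min(a,\,c-b+1)$ is a clean (arguably cleaner than the paper's) way to extract the condition ``$a=1$ or $b=c$''.

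The genuine gap is Step 2, which is where the real work of the proposition lies, and the sketch you give for it rests on a false claim. If the smallest pure $y$-power in $I$ is $y^p$ with $p\geq 3$, strong stability does \emph{not} produce a minimal generator $x^iy^{p-i}$ with $i,\,p-i\geq 1$. For example, $I=(x^2,xy,y^3,xz,y^2z,yz^2,z^3)$ is strongly stable with smallest pure $y$-power $y^3$, yet both $xy^2$ and $x^2y$ are non-minimal (they are divisible by $xy$); the only mixed generator in $x,y$ is $xy$, of degree $2<p$. So your construction has no generator to act on precisely in the case $xy\in I$, $y^2\notin I$, which is the case that actually requires an argument. (The paper handles it by producing a second doubly-negative \emph{socle} map supported on the generators $xz^i$, $y^2z^j$, $yz^k$ with images $xz^{i-1}$ or $y^2z^{j-1}$, and checking that its weight differs from the tangents already accounted for.) Your proposed pair of images ``$z^k$ versus $yz^{k'}$'' has an additional signature problem: when the generator is $xy^{b'}$ with $b'=1$, the map to $yz^{k'}$ has $y$-weight $0$, so it lies in $T_{\ppn}(I)$ rather than $T_{\nnp}(I)$ and cannot contradict $\dim_{\kk} T_{\nnp}(I)=1$. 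Since you yourself flag this step as the main obstacle and leave the syzygy verification and case analysis undone, the argument as written does not establish that $x^2,xy,y^2\in I$, and hence does not reach the normal form $J(a,b,c)$.
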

\begin{proof}
We may assume $x \notin I$, because otherwise $[I]$ would define a point of $\Hilb^d(\mathbb{A}^2)$ and would therefore be smooth. We begin by constructing some doubly-negative tangent vectors.

An \textbf{nnp}-tangent.
Since $x \notin I$, there is a minimal generator of the form $xy^b$ with $b > 0$.
By \Cref{ConstructSocle}, there is a (unique) socle monomial of the form $m_{\soc} = z^\gamma$.
Then, the socle map defined by $\varphi(xy^b) = m_{\soc}$  lies in $T_{\nnp}(I)$ with weight $(-1,-b,\gamma)$.

A \textbf{pnn}-tangent.
Since $x \notin I$, we have $y \notin I$, and so there is a minimal generator $yz^c$ with $c > 0$.
By \Cref{ConstructSocle}, there is a socle monomial $m_{\soc} = x z^\gamma$
with $\gamma \geq 0$. Since $I$ is strongly stable, $xz^c \in I$ and thus we
must have $\gamma < c$. It follows that the socle map defined by $\varphi(yz^c) = m_\soc$ lies in $T_{\pnn}(I)$ with weight $(1,-1,\gamma-c)$.

An \textbf{npn}-tangent.
Unlike the above two cases, the npn-map we will construct needs not be a socle map.
Since $x \notin I$, there is a minimal generator of the form $xz^c$ with $c > 0$.
Let $b\geq 2$ be such that $y^b$ is a minimal generator of $I$.
Then, we claim that the map defined by
$\varphi(xz^c) =  y^{b-1}z^{c-1}$ and $\varphi(m) = 0$ for all other minimal
generators  of $I$ yields a tangent vector $\varphi$ in $T_{\npn}(I)$ with
weight $(-1,b-1,-1)$.

To prove the claim, we need to show that if $p_1m_2-p_2m_1 = 0$ is a syzygy,  
with $p_i \in S$ and $m_i \in I$, then $p_1\varphi(m_2) = p_2\varphi(m_1)$.  
Since $x$ and $y$ annihilate $y^{b-1}z^{c-1}$ in $S/I$, we only need to check the minimal syzygies for which $p_1$ or $p_2$ is a pure power of $z$. We may assume $p_1 = z^j$ and $m_2 = xz^c$. This forces, $p_2 = x$ and $m_1 = z^\gamma \in I$ is a minimal generator, and $\gamma = j+c$. Since, $xz^k\varphi(z^{\gamma}) =0$ we only need to show that $z^j\varphi(xz^c) =  0$. But this follows from the fact that 
$z^j\varphi(xz^c) = z^j(y^{b-1}z^{c-1}) = y^{b-1}z^{j+c-1} = y(y^{b-1}z^{\gamma-1}) \in I$.
This concludes the proof of claim.

Suppose that $\dim_{\kk} T(I) = 3d+6$.
By \Cref{theorem_smooth_monomials} it follows that 
 $\dim_{\kk} T_{\pnn}(I) = \dim_{\kk} T_{\npn}(I) =\dim_{\kk} T_{\pnn}(I) =
 1$. If we produce a doubly-negative tangent vector different from the above
 ones, we get a contradiction.

Assume that $xy \notin I$. 
Then,
in addition to  the doubly-negative tangent vectors constructed above, 
we find another doubly-negative tangent vector in $T_{\pnn}(I)$. 
Indeed, the strongly stable property implies that $y^2 \notin I$ and thus there is a minimal generator of the form $m_\gen = y^2z^c$. 
By \Cref{ConstructSocle}, there is a socle monomial $m_\soc = xyz^\gamma$. 
Again, by the strongly stable property, we have $\gamma < c$.
 It follows that the socle map defined by $\varphi(m_\gen) = m_\soc$ lies in $T_{\pnn}(I)$ with weight $(1,-1,\gamma-c)$.

We thus assume that $xy\in I$.
Assume now that $y^2 \notin I$. 
Let $z^\gamma \in I$ be a minimal generator, and since $I$ is strongly stable, we have $yz^{\gamma-1}, y^2z^{\gamma-2}, xz^{\gamma -1} \in I$. 
Since $x,y, y^2 \notin I$, this implies that we have minimal generators of the form $xz^i, y^2z^j, yz^k \in I$ with $i \leq k$, $j \leq k-1$ and $k \leq \gamma-1$. 
It follow that $xz^{i-1}, y^2z^{j-1}, yz^{k-1}$ and $z^{\gamma-1}$ are in $\soc(S/I)$. Note that 
\begin{itemize}
\item if $i \leq j$, there is a socle map induced by $\varphi(y^2z^j) = xz^{i-1}$ with weight $(1,-2,i-1-j)$, and
\item if $i > j$, there is a socle map induced by $\varphi(xz^i) = y^2z^{j-1}$ has weight $(-1,2,j-1-i)$. 
\end{itemize}
In both cases, we get doubly negative maps different from those constructed
above  (compare the positions of the weights that are less than $-1$), a contradiction. Thus $y^2 \in I$, and $I= (x^2,xy,y^2,xz^{a},yz^b,z^{c+1})$ for some $1 \leq a \leq b \leq c$.

Finally, assume $1 < a \leq b < c$.  
Then, there is a tangent vector $\varphi$ in $T_{\nnp}(I)$ with weight $(-1,-1,c-1)$  such that $\varphi(xy) = z^{c-1}$ and $\varphi(m)= 0$ for all other minimal generators $m$ of $I$. 
To see that this is well-defined, note that $z^{c-1}$ is annihilated by
$x,y,z^2$ in $S/I$. Then, arguing as above, the only minimal syzygies we need to check are of the form $p_1m_1-p_2m_2$ with some $p_i$ equal to either, $1$ or $z$. 
However, looking at the minimal generators of $I$ we see that there is no such syzygy. 
Thus, $\varphi$ is well-defined. 
Since the weight of $\varphi$ is different from the weight of the
$\nnp$-vector constructed above, we obtain $\dim_{\kk} T(I) > 3d+6$.
\end{proof}

Combining this analysis with the linkage technique from
\Cref{SubsectionLinkageSingularities},
we are able to classify all the strongly stable ideals with tangent space dimension $3d+6$.

\begin{prop} \label{prop_linkage_Borel_3d+6}
The following ideals of $S$ belong to the linkage class of $\mm^2$:
\begin{itemize}
\item $J(a,b,c)$, if $a = 1$ or $b =c$;
\item $I^{\tri}(a,b,c)$, for all $a,b,c \geq 2$.
\end{itemize}
\end{prop}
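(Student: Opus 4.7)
The plan is to exhibit explicit chains of links from the stated ideals to $\mm^2 = I^{\tri}(2,2,2)$. Since $\mm^2$ is itself a singular point (the triple $\{x,y,z\}\subseteq\soc(S/\mm^2)$ is singularizing), \Cref{ThmSmoothMonomialClassification} implies that $\mm^2$ is not licci, so the licci-based arguments of \Cref{ref:NoFlipsLicci:thm} cannot be invoked to deduce membership in the linkage class of $\mm^2$; the chains must be constructed by hand.

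For the tripod family $I^{\tri}(a, b, c)$, I will argue by induction on $a + b + c$, the base case $(2, 2, 2)$ being $\mm^2$. For the inductive step, the natural complete intersection $(x^a, y^b, z^c) \subseteq I^{\tri}(a, b, c)$ provides a first link, whose colon a direct monomial computation identifies as
\[
L(a, b, c) := (x^a, y^b, z^c, x^{a-1}y^{b-1}, x^{a-1}z^{c-1}, y^{b-1}z^{c-1}).
\]
A second link, induced by a (non-monomial) complete intersection contained in $L(a,b,c)$---a natural symmetric candidate is $(x^a - y^{b-1}z^{c-1},\, y^b - x^{a-1}z^{c-1},\, z^c - x^{a-1}y^{b-1})$, whose regularity can be checked via a weight or Gr\"obner-degeneration argument---should eliminate the off-diagonal generators of $L(a,b,c)$ in the colon and yield a smaller tripod $I^{\tri}(a', b', c')$ with $a'+b'+c'<a+b+c$, completing the induction.

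For the family $J(a, b, c)$, the complete intersection $(x^2, y^2, z^{c+1})$ is contained in both $J(a, c, c)$ and $J(1, b, c)$, and a direct monomial colon computation yields the reciprocal links
\[
(x^2, y^2, z^{c+1}) : J(a, c, c) = (x^2, xy, xz, y^2, yz^{c+1-a}, z^{c+1}) = J(1, c+1-a, c),
\]
and symmetrically for the other direction; the two subfamilies $\{a=1\}$ and $\{b=c\}$ are thus swapped by a single link. To actually decrease a parameter, a second, non-monomial complete intersection such as $(x^2, y^2, z^{c+1} + \lambda xy)$ (regular by a direct calculation analogous to those in the broken Gorenstein examples) is used; iterating such steps terminates at the degenerate case $J(1, 1, c) = I^{\tri}(2, 2, c+1)$, which is a tripod handled by the first part.

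The main obstacle is the construction of the second, non-monomial complete intersection at each inductive step. The obvious monomial complete intersections tend to link ideals back to themselves or merely interchange the two subfamilies, yielding no reduction. Verifying the regularity of the proposed non-monomial sequences is routine; the combinatorial difficulty lies in tracking the colon computation through non-monomial relations (e.g.\ $z^c = x^{a-1}y^{b-1}$) and matching the resulting ideal to a smaller tripod or $J$. A secondary difficulty is that intermediate ideals may not lie in the tripod or $J$ families, so the induction may need to be broadened to a larger class of staircase ideals admitting such linkage reductions.
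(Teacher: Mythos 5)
Your overall strategy---explicit chains of links through carefully chosen, generally non-monomial complete intersections---is the right one, and your observation that the monomial complete intersection $(x^2,y^2,z^{c+1})$ links $J(a,c,c)$ to $J(1,c+1-a,c)$ is correct and gives a clean passage between the two subfamilies. The gap is that the steps which actually reduce the parameters are never carried out, and the one concrete candidate you propose for the tripod reduction fails. The sequence $(x^a-y^{b-1}z^{c-1},\, y^b-x^{a-1}z^{c-1},\, z^c-x^{a-1}y^{b-1})$ need not be regular: already for $a=b=c=2$ every point $(t,t,t)$ satisfies all three equations, so the zero locus contains the line $x=y=z$ and the sequence has codimension at most $2$. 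More importantly, in no case do you compute the resulting colon ideal; ``should eliminate the off-diagonal generators'' and ``iterating such steps terminates'' are precisely the assertions that constitute the proof, and you yourself flag them as the main obstacle. As written, this is a plan rather than a proof.

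For comparison, the paper's argument avoids your induction on $a+b+c$ entirely: the single complete intersection $\ua=(xy,\,xz+yz,\,x^a+y^b+z^c)\subseteq I^{\tri}(a,b,c)$ links $I^{\tri}(a,b,c)$ directly to $I^{\tri}(2,2,c)$. This is verified by exhibiting an explicit monomial basis of $S/(\ua)$, so that $\dim_\kk S/(\ua)=a+b+2c$, checking the containment $I^{\tri}(a,b,c)\cdot I^{\tri}(2,2,c)\subseteq(\ua)$ by hand, and then upgrading the resulting inclusion $I^{\tri}(2,2,c)\subseteq(\ua:I^{\tri}(a,b,c))$ to an equality via the length identity $\dim_\kk S/(\ua)=\dim_\kk S/I+\dim_\kk S/(\ua:I)$. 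Analogous one-step links give $I^{\tri}(2,2,c)\sim\mm^2$, $J(1,b,c)\sim I^{\tri}(2,2,c-2)$ and $J(a,b,b)\sim J(1,a+1,b+1)$, which together cover both families. To complete your route you would need to replace the symmetric candidate by regular sequences of this kind and perform the containment-plus-length-count verification at every step; without that, membership in the linkage class of $\mm^2$ is not established.
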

\begin{proof} 
Since $\mm^2 = I^{\tri}(2,2,2)$, it suffices to establish the following links
\begin{enumerate}
\item \label{BorelItem1} $I^{\tri}(a,b,c) \sim I^{\tri}(2,2,c)$,
\item \label{BorelItem2} $J(1,b,c) \sim I^{\tri}(2,2,c-b+2)$, and
\item \label{BorelItem3} $J(a,b,b) \sim J(1,b-a+1,b)$.
\end{enumerate}
Indeed, by symmetry, \eqref{BorelItem1} also implies $I^{\tri}(a,b,c) \sim I^{\tri}(a,2,2)$,
and therefore $I^{\tri}(2,2,c) \sim I^{\tri}(2,2,2)$.

We begin by proving \eqref{BorelItem1}.
Consider the sequence
$$\ua := \big(xy,xz+yz,x^a+y^b+z^c\big)\subseteq I^{\tri}(a,b,c).$$
Choosing the pure lexicographic order with $z>x>y$,  the initial ideal of $(\ua)$ is $(xy, xz, zy^2, x^{a+1}, y^{b+2}, z^c)$.
It follows that a $\kk$-basis of $S/(\ua)$ is 
$$
\big\{1,x,\dots,x^{a},y,\dots,y^{b+1},yz,\dots,yz^{c-1},z,\dots,z^{c-1}\big\},
$$
thus,  $\ua$ generates an ideal of codimension 3,  it is a  regular sequence, and  $\dim_\kk(S/(\ua)) = a+b+2c$. 
Next, we observe that  that $I^{\tri}(a,b,c)I^{\tri}(2,2,c)
\subseteq (\ua)$, that is,
$I^{\tri}(2,2,c) \subseteq (\ua:I^{\tri}(a,b,c))$.
We have 
$$
xy \equiv 0 \bmod (\ua), 
\qquad
x^2z \equiv -xyz \equiv 0 \bmod (\ua), 
\qquad
y^2z \equiv - xyz \equiv 0 \bmod (\ua),
\qquad
$$
$$
xz^{c+1}  \equiv -xz(x^a+y^b) \equiv -x^{a+1}z \equiv x^{a}yz \equiv 0 \bmod (\ua),
$$
$$
yz^{c+1} \equiv -yz(x^a+y^b) \equiv -y^{b+1}z \equiv y^{b}xz \equiv 0 \bmod (\ua).
$$
Thus, all the mixed monomials in the product  $I^{\tri}(a,b,c)I^{\tri}(2,2,c)$ also lie in $(\ua)$. 
It remains to check $x^{a+2}$, $y^{b+2}$ and $z^{2c}$ lie in $(\ua)$. It is enough to show that these are equivalent to mixed monomials modulo $(\ua)$. 
Indeed, we have $x^{a+2} \equiv -x^2(y^b+z^c) \bmod (\ua)$, $y^{b+2} \equiv -y^2(x^b+z^c) \bmod (\ua)$ and $z^{2c} \equiv - z^{2c}(x^a+y^b) \bmod (\ua)$. 
Finally, since 
$\dim_\kk( S/I^{\tri}(2,2,c)) + \dim_\kk( S/I^{\tri}(a,b,c)) = \dim_\kk( S/\ua)$,
it follows that $I^{\tri}(2,2,c) = (\ua:I^{\tri}(a,b,c))$,
and this shows the desired claim \eqref{BorelItem1}.

We apply the same argument to the remaining items.
For  \eqref{BorelItem2}, we use the regular sequence  
$$\ua = \big(xz,y^2,z^{c+1}+x^2\big) \subseteq J(1,b,c).$$
The initial ideal of $(\ua)$ is
 $(xz, y^2,x^3,z^{c+1})$,
 thus,
$$\{1,x,x^2,xy,x^2y,y,yz,\dots,yz^c,z,\dots,z^{c}\}$$
is  a $\kk$-basis of $S/(\ua)$, and
 $\dim_\kk( S/\ua)  = 2c+6 =\dim_\kk( S/J(1,b,c)) + \dim_\kk( S/I^{\tri}(2,2,c-b+2))$.
As before,
it remains to verify that  $J(1,b,c)I^{\tri}(2,2,c-b+2)\subseteq (\ua)$. 
We have $xz, y^2x,y^2z \equiv 0 \bmod (\ua)$, $yx^3 \equiv - yxz^{c+1} \equiv 0 \bmod (\ua)$ and $yz^{c+2} \equiv - yzx^2 \equiv 0 \bmod (\ua)$. It follows that all the mixed monomials of $J(1,b,c)I^{\tri}(2,2,c-b+2)$ lie in $(\ua)$. To deal with the pure powers note that $x^{4} = -x^2z^{c+1} \equiv 0 \bmod (\ua)$, $y^2 \equiv 0 \bmod (\ua)$ and $z^{c+2} \equiv -zx^2 \equiv 0 \bmod (\ua)$. In conclusion, $J(1,b,c)\sim I^{\tri}(2,2,c-b+2)$.

For item \eqref{BorelItem3}, we use the regular sequence $(\ua) = (x^2,y^2,xy+z^{b+1}) \subseteq J(a,b,b)$. 
Clearly, $ (x^2,y^2,z^{b+1})$ is an initial ideal of $(\ua)$, 
so $\dim_\kk(S/(\ua)) = 4b+4= \dim_\kk( S/J(a,b,b)) + \dim_\kk( S/J(1,b-a+1,b))$.
Once again, it suffices to show $J(a,b,b)J(1,b-a+1,b)\subseteq (\ua)$. We have $x^2y,xy^2, x^2z,y^2z \equiv 0 \bmod (\ua)$, $xz^{b+1} \equiv - x^2y \equiv 0 \bmod (\ua)$ and $yz^{b+1} \equiv - xy^2 \equiv 0 \bmod (\ua)$.  
Hence, all the mixed monomials of $J(a,b,b)J(1,b-a+1,b)$ lie in $(\ua)$. To deal with the pure powers note that $x^2,y^2 \equiv 0 \bmod (\ua)$ and $z^{2(b+1)} \equiv -(xy)^2 - 2(xy)(z^{b+1}) \equiv 0 \bmod (\ua)$. In conclusion, $J(a,b,b)\sim J(1,b-a+1,b)$.
\end{proof}

We now present  the main result of this subsection, which settles
{\cite[Conjectures 4.27 and 4.29]{H23}}.

\begin{thm}\label{theorem_strongly_stable_3d+6}
Let $I\subseteq S$ be a strongly stable ideal such that $\dim_\kk(S/I)=d$.
Then, $\dim_{\kk} T(I) = 3d+6$ if and only if $I = J(a,b,c) = (x^2,xy, y^2,xz^a,yz^b,z^{c+1})$ with $a\leq b \leq c$ and either  $a = 1$ or $b=c$.
\end{thm}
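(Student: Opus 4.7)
The plan is to deduce the theorem from two results stated earlier combined with a short graded calculation at $\mathfrak{m}^2$.

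The forward direction is exactly \Cref{ref:stronglyStableClassification:prop}: a strongly stable ideal $I$ with $\dim_\kk T(I) = 3d+6$ must be of the form $J(a,b,c)$ with $a \leq b \leq c$ and either $a=1$ or $b=c$.

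For the converse, suppose $I = J(a,b,c)$ with $a \leq b \leq c$ and either $a=1$ or $b=c$, and set $d = \dim_\kk(S/I)$. \Cref{prop_linkage_Borel_3d+6} places $I$ in the linkage class of $\mathfrak{m}^2 \subseteq S = \kk[x,y,z]$. Monomial ideals are smoothable, and linkage preserves smoothability, so applying \Cref{theorem_excess_dim} along the chain of links from $I$ to $\mathfrak{m}^2$ yields
\[
\dim_\kk T(I) - 3d \;=\; \dim_\kk T(\mathfrak{m}^2) - 3\cdot 4 \;=\; \dim_\kk T(\mathfrak{m}^2) - 12.
\]
Hence the theorem reduces to the identity $\dim_\kk T(\mathfrak{m}^2) = 18$.

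This identity is essentially tautological once it is phrased gradedly. The ideal $\mathfrak{m}^2$ is generated by the $6$ monomials of degree $2$, and all its first syzygies live in degree $3$. Since $(S/\mathfrak{m}^2)_{\geq 2} = 0$, a graded homomorphism $\varphi \colon \mathfrak{m}^2 \to S/\mathfrak{m}^2$ of degree $e$ can be nonzero only for $e \in \{-2,-1\}$. When $e = -2$, the Koszul relations among the generators force $\varphi = 0$; when $e = -1$, each syzygy imposes a relation in $(S/\mathfrak{m}^2)_2 = 0$ and thus imposes no constraint at all, leaving $6 \cdot \dim_\kk (S/\mathfrak{m}^2)_1 = 18$ free parameters. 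Finally, $[I]$ is automatically singular since a tangent excess of $6 > 0$ precludes smoothness on the smoothable component of $\Hilb^d(\mathbb{A}^3)$.

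There is no serious obstacle in this argument: the only step that might have been substantial, the tangent calculation at $\mathfrak{m}^2$, collapses to the degree count above once one observes that all syzygies of $\mathfrak{m}^2$ sit in a degree where $S/\mathfrak{m}^2$ already vanishes.
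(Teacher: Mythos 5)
Your proof is correct and follows essentially the same route as the paper: \Cref{ref:stronglyStableClassification:prop} for one implication, and \Cref{prop_linkage_Borel_3d+6} together with \Cref{theorem_excess_dim} to transport the tangent excess from $\mm^2$ for the other. The only (harmless) difference is that you compute $\dim_{\kk} T(\mm^2)=18$ directly by a graded degree count, whereas the paper reads it off as the case $\mm^2=I^{\tri}(2,2,2)$ of \Cref{example_tripod_2}.
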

\begin{proof} 
Suppose  $I = J(a,b,c)$ with $a\leq b \leq c$ and either  $a = 1$ or $b=c$.
Then, by \Cref{prop_linkage_Borel_3d+6} and \Cref{theorem_excess_dim}, it follows that $T(I)$ has the same smoothable tangent excess as $T(\mm^2)$, namely, $6$ \cite[Proposition III.4]{Briancon_Iarrobino__dimension_of_punctual_Hilbert_scheme}, as required. 
The converse is \Cref{ref:stronglyStableClassification:prop}.
\end{proof}

\begin{remark} \label{remark_counterexample_p_Borel} 
When $\kk$ is a field  of characteristic $ p > 0 $,
there exist  Borel-fixed ideals which are not strongly stable ideals. 
We show that  \Cref{theorem_strongly_stable_3d+6} cannot be extended to the class of Borel-fixed ideals.

Let $p$ be an odd prime. The ideal
$
I = (x^2,xy,xz,y^p,(yz)^{p-1},z^p)
$
is Borel-fixed by \cite[Theorem 15.23]{EisView}. 
Consider $\ua := (xy,xz + y^p, x^2 + z^p) \subseteq I$.
One can argue as in the proof of \Cref{prop_linkage_Borel_3d+6} to show that
$
(\ua:I) = (x^2,y^2,z^p,xy,xz,yz) = I^{\tri}(2,2,p).
$
Thus, as noted in the proof of the previous theorem, it follows that $\dim_{\kk} T(I) = 3d+6$. 
However, it is not of the form described in \Cref{theorem_strongly_stable_3d+6}.
\end{remark}

We now proceed to
we determine the singularity type for 
singular points on  the Hilbert scheme
corresponding to tripod ideals and strongly stable ideals with a smoothable tangent excess of 6.
In this way, we affirmatively answer \cite[Conjecture 4.31]{H23} in many cases.

\begin{thm}\label{ref:surplusSixIsGrCone:thm} 
Let $I\subseteq S$ be a strongly stable ideal or a tripod ideal
with $\dim_\kk(S/I)=d$.
Assume that $\dim_{\kk} T(I) = 3d+6$.
Then, the singularity at $[S/I] \in \Hilb^d(\AA^3)$ is smoothly equivalent
to the vertex of a cone over the Grassmannian
$\Gr(2,6) \hookrightarrow \mathbb{P}^{14}$ in its Pl\"ucker embedding.
\end{thm}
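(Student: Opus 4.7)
The strategy is to combine the linkage-invariance of singularities with the classically known singularity type at a single, concrete reference point. More precisely, the plan is to show that every ideal in the hypothesis lies in the linkage class of the ideal $\mm^2 \subseteq \kk[x,y,z]$ (which gives a point of $\Hilb^4(\AA^3)$), and then invoke the classical description of the singularity of the Hilbert scheme at $[\mm^2]$.

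Step one is the linkage reduction. Note that $\mm^2 = I^{\tri}(2,2,2)$, so $\mm^2$ itself is a tripod. If $[I]$ is a tripod $I^{\tri}(a,b,c)$, then \Cref{prop_linkage_Borel_3d+6}(2) gives a link $I^{\tri}(a,b,c) \sim I^{\tri}(2,2,c)$, and \Cref{prop_linkage_Borel_3d+6}(1) gives a further link $I^{\tri}(2,2,c) \sim \mm^2$. If instead $[I]$ is a strongly stable point with $\dim_{\kk} T(I) = 3d+6$, then \Cref{ref:stronglyStableClassification:prop} forces $I = J(a,b,c)$ with either $a=1$ or $b=c$; chaining \Cref{prop_linkage_Borel_3d+6}(3)--(4) with items (1)--(2) shows $J(a,b,c) \sim \mm^2$ in both subcases. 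Thus every $I$ in the hypothesis is licci and, in particular, lies in the linkage class of $\mm^2$.

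Step two applies \Cref{theorem_smoth_equiv} to conclude that the singularity at $[I] \in \Hilb^d(\AA^3)$ is smoothly equivalent to the singularity at $[\mm^2] \in \Hilb^4(\AA^3)$. This is where the linkage-in-families machinery of \Cref{SubsectionLinkageSingularities} does the real work: the isomorphism $L_{d,d'}$ of \Cref{linkage_in_families} transports one nested point to another, and the smooth projection $p$ of \Cref{smooth_equivalence_lci_superscheme} then exhibits both singularities as smoothly equivalent to a common point in the $\lci$-locus of the nested Hilbert scheme.

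Step three is to identify the singularity at $[\mm^2] \in \Hilb^4(\AA^3)$. One checks directly (or cites) that $\dim_{\kk} T(\mm^2) = 18$ while the smoothable component has dimension $12$, yielding an excess of $6$ matching the codimension of the affine cone over $\Gr(2,6) \subset \mathbb{P}^{14}$ inside $\AA^{15}$. The required smooth equivalence at $[\mm^2]$ is the classical theorem of Katz~\cite{Katz__4points}, for which alternative proofs have been given by Huibregtse and by Hauenstein--Manivel--Szendr\H{o}i~\cite{Hauenstein_Manivel_Szendroi__Katz_bis}. I would simply invoke this result to conclude.

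The main obstacle, in principle, is step three, since it is the only input not intrinsic to our framework; however, it is entirely classical. The work of this theorem is therefore essentially contained in the linkage chains already recorded in \Cref{prop_linkage_Borel_3d+6} together with the invariance result \Cref{theorem_smoth_equiv}, so the proof is short and structural rather than computational.
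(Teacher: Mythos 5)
Your proposal is correct and follows essentially the same route as the paper: reduce to $\mm^2$ via the linkage chains of \Cref{prop_linkage_Borel_3d+6} (using \Cref{ref:stronglyStableClassification:prop} in the strongly stable case), transport the singularity type by \Cref{theorem_smoth_equiv}, and conclude with Katz's description of $[\mm^2]\in\Hilb^4(\AA^3)$. The only difference is that you spell out the classification step and the dimension count explicitly, which the paper leaves implicit.
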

\begin{proof}
 By \Cref{prop_linkage_Borel_3d+6}, $I$ is in the linkage class of $\mm^2$. 
 By \cite{Katz__4points}, the singularity at $[\mm^2] \in \Hilb^4(\AA^3)$ is locally a cone over $\Gr(2,6) \hookrightarrow \mathbb{P}^{14}$ in its Pl\"ucker embedding with a $3$-dimensional vertex. 
 The result now follows by applying \Cref{theorem_smoth_equiv}.
\end{proof}

The above proof  suggests that an approach to proving \cite[Conjecture 4.31]{H23} in full generality would involve showing that all ideals $I$ with tangent space dimension $3d+6$ are in the  linkage class of $\mm^2$.
The latter statement would also be a natural ``next step'' version of
\Cref{main_conjecture},
which implies that all ideals $I$ with tangent space dimension $3d$ are in the  linkage class of $\mm$.
 Unfortunately, this is not always the case, as the following example shows.

 \begin{example}\label{ex:monomialSurplusSixNOTlinkedToKatz} Consider the monomial ideal $I= (x^3,y^3,z^3,yz^2,x^2z,xy^2)$.
A direct calculation shows that $[S/I] \in \Hilb^{14}(\AA^3)$ and $\dim_{\kk} T(I) = 48 =3\cdot 14+6$.
In fact, more is true:
the singularity at $[S/I]$ is smoothly equivalent
to the vertex of a cone over the Grassmannian
$\Gr(2,6) \hookrightarrow \mathbb{P}^{14}$ in its Pl\"ucker embedding.
This can be verified  using 
the Macaulay2 package \texttt{VersalDeformations}
\cite{Ilten}.
We will prove that $I$ is not in the same linkage class as $\mm^2$.
This shows that linkage is a strictly finer equivalence relation than smooth equivalence.

The minimal resolution of $S/I$ is
$$
0 \longrightarrow S(-6)^4 \longrightarrow S(-4)^3 \oplus S(-5)^6  \longrightarrow S(-3)^6 \longrightarrow S \longrightarrow S/I \longrightarrow 0.
$$
Consider the link $J = ((x^3,y^3,z^3):I) = (x^3,y^3,z^3,yz^2,x^2z,xy^2,xyz)$. 
Assume by contradiction that $\mm^2$ is in the same linkage class as $I$.
Since $I$ and $J$ are directly linked, 
then  $\mm^2$ is in the even linkage class of either $I$ or $J$.
We will show that this leads to a contradiction using \cite[Theorem 6.3]{Huneke_Ulrich__Structure_of_linkage}.

Assume that $\mm^2$ is evenly linked to $I$.
Since the graded Betti numbers $\beta_{i,j}(S/I)$ satisfy the inequality 
 $6 = \max{\beta_{3,j}(S/I)} \geq (3-1)  \min{\beta_{1,j}(S/I)} = 6$, \cite[Theorem 6.3]{Huneke_Ulrich__Structure_of_linkage} implies that 
$4 = \dim_\kk(S/\mm^2) \geq  \dim_\kk(S/I)= 14$, contradiction.
The same argument shows that $\mm^2$ is not evenly linked to $J$,
since its minimal resolution is 
$$
0 \longrightarrow S(-6)^3 \longrightarrow S(-4)^6 \oplus S(-5)^3  \longrightarrow S(-3)^7 \longrightarrow S \longrightarrow S/J \longrightarrow 0.
$$

\end{example}

\bibliographystyle{abbrv}
\bibliography{references.bib}

\end{document}